\definecolor{webgreen}{rgb}{0,.5,0}
\definecolor{webbrown}{rgb}{.6,0,0}
\begin{document}

\theoremstyle{plain}
\newtheorem{theorem}{Theorem}
\newtheorem{proposition}[theorem]{Proposition}
\newtheorem{corollary}[theorem]{Corollary}
\newtheorem{lemma}[theorem]{Lemma}
\theoremstyle{definition}
\newtheorem{example}[theorem]{Example}
\newtheorem*{remark}{Remark}

\begin{center}
\vskip 1cm
{\LARGE\bf Finite sums associated with some polynomial identities \\ }

\vskip 1cm

{\large
Kunle Adegoke \\
Department of Physics and Engineering Physics \\ Obafemi Awolowo University \\ 220005 Ile-Ife, Nigeria \\
\href{mailto:adegoke00@gmail.com}{\tt adegoke00@gmail.com} \\

\vskip .25 in

Robert Frontczak \\
Independent Researcher \\ 72762 Reutlingen,  Germany \\
\href{mailto:robert.frontczak@web.de}{\tt robert.frontczak@web.de}

\vskip .25 in

Karol Gryszka \\
Institute of Mathematics \\ University of the National Education Commission, Krakow \\
Podchor\c{a}\.{z}ych 2, 30-084 Krak{\'o}w, Poland\\
\href{mailto:email}{\tt karol.gryszka@uken.krakow.pl} 
}

\end{center}

\vskip .2 in

\begin{abstract}
In this paper, we present a general framework for the derivation of interesting finite combinatorial sums starting with certain classes of polynomial identities. The sums that can be derived involve products of binomial coefficients and also harmonic numbers and squared harmonic numbers. We apply the framework to discuss combinatorial sums associated with some prominent polynomial identities from the recent past. 
\end{abstract}

\noindent 2020 Mathematics Subject Classification: 05A10, 05A19, 11B83.

\noindent \emph{Keywords:}
Polynomial identity, Combinatorial identity, Harmonic number, Binomial coefficient.

\section{Motivation and Preliminaries}

Harmonic numbers $(H_n)_{n\geq 0}$ are defined by
\begin{equation*}
H_0 = 0 \quad \mbox{and for $n\geq 1:$} \qquad H_n = \sum_{k=1}^n \frac{1}{k}.
\end{equation*}
Due to their ubiquitous connections with combinatorics and number theory harmonic numbers 
and generalized harmonic numbers are very popular research objects. The mathematical literature is extremely rich, 
still growing constantly (see the papers of Batir \cite{1, Batir2020,2,3}, Boyadzhiev \cite{4,5}, Choi \cite{6, Choi13, 7}, 
Chu \cite{8,Chu12,9}, Gen\v{c}ev \cite{13}, Jin and Du \cite{15}, Sofo and Srivastava \cite{17}, Wang and Wei \cite{20}, 
Wei et al.\ \cite{21}, Wei and Wang \cite{22} or Xu \cite{23}, to name a few recent and important articles in the subject).

In 2009, Boyadzhiev \cite{4} studied binomial sums with harmonic numbers using the Euler transform. 
His main result is the following identity valid for $n\geq 1$ and all $t\in\mathbb{C}$
\begin{equation}\label{Boyadzhiev}
\sum_{k=1}^{n} \binom{n}{k} H_{k} t^{k} = (1+t)^{n} \left ( H_{n} - \sum_{k=1}^{n}\frac{1}{k (1+t)^k} \right ).
\end{equation}
We have learned recently, however, that Boyadzhiev's identity \eqref{Boyadzhiev} was discovered earlier by Donald Knuth.
Knuth published it in his book \cite[Page 77, Eq. (10)]{Knuth} in 1997. So, Boyadzhiev actually only rediscovered this identity, 
which will be referred to as the Knuth-Boyadzhiev (KB) identity. Frontczak \cite{11} obtained a complement of the KB identity 
given in \eqref{Boyadzhiev} and derived
\begin{equation}\label{Frontczak}
\sum_{k=0}^{n} \binom{n}{k} H_{k} t^k = \left ((1+t)^n - 1 \right )H_n - t \sum_{k=0}^{n-1} H_{n-1-k} (1+t)^k.
\end{equation}
In addition, Frontczak \cite{12} also derived an analogue formula involving the skew-harmonic numbers. 
It is also notable that the KB identity is a special case of a binomial sum identity involving multiple 
harmonic-like numbers derived by Adegoke and Frontczak \cite{Adegoke}.
Batir \cite{2} and Batir and Sofo \cite{3} did research in the same direction. Among other things, they derived in \cite{3} the identity
\begin{equation} \label{BaSo}
\sum_{k=1}^{n} \binom{n}{k} A_k t^k = (1+t)^{n-1}\sum_{k=0}^{n-1} \left ( \sum_{j=0}^{k} \binom{k}{j} a_{j+1} t^{j+1}\right )\frac{1}{(1+t)^k},
\end{equation}
where $(a_n)_{n\geq 1}$ is any sequence in $\mathbb{C}$, and $A_n=a_1+a_2+\cdots+a_n$ is its $n$th partial sum with $A_0 = 0$.
Identity \eqref{BaSo} is another generalization of the KB result and contains identity \eqref{Boyadzhiev} as a special 
case for $a_n=1/n$. We also note that Frontczak's identity \eqref{Frontczak} is stated wrongly in \cite{3}. In \cite{2} Batir proved a generalization of \eqref{BaSo} involving generalized binomial coefficients. 

In this paper, we present a general framework (outlined in Section \ref{Section_GerneraFramework}) for deriving various finite combinatorial sums involving (central) binomial coefficients, harmonic numbers, odd harmonic numbers, and squared harmonic numbers. The framework can be succinctly described as a set of transformations applied to a given polynomial identity (like \eqref{Boyadzhiev} or \eqref{BaSo}) which leads to many associated combinatorial sums. For example, we will demonstrate that the finite sum 
\begin{equation}\label{eq:Intro_example}
\sum_{k = 1}^n (- 1)^{k - 1} \frac{\binom{n}{k}}{\binom{k + r}{s}} H_k = \frac{s}{r - s + 1}\left( \sum_{k = 1}^n 
\frac{1}{k\binom{n - k + r}{r - s + 1}} - \frac{H_n}{\binom{n + r}{r - s + 1}} \right),
\end{equation}
which holds for all $s,r\in\mathbb C\setminus\mathbb Z^{-}$ such that $s\ne 0$ and $r-s\not\in\mathbb Z^{-}$, is a consequence of 
\eqref{Boyadzhiev}. Identity \eqref{eq:Intro_example} contains several interesting sums as direct descendents and special cases. 
Furthermore, we will derive some presumably new identities involving binomial coefficients, such as the following identity
\begin{equation*}
\sum_{k=0}^{n} (-1)^{k} \frac{2^{2k}}{2k+1} \frac{\binom{n}{k}}{\binom{2k}{k}} = \frac{1}{2n+1}
\end{equation*}
which results from applying the general framework to Chebyshev polynomials of the second kind.

We apply the framework to discuss combinatorial sums associated with several prominent polynomial identities from the recent past. 
It is important to note that we do not explicitly formulate every possible result based on these identities to avoid repetition 
of some overly complex expression. For additional polynomial identities to which our general framework can be applied, as well as more combinatorial sums that can be derived, we refer the reader to Section~\ref{Section_Conclusion}.

\section{Definitions and required identities}

Harmonic numbers $H_s$ and odd harmonic numbers $O_s$ are defined for $0\ne s\in\mathbb C\setminus\mathbb Z^{-}$ 
by the recurrence relations
\begin{equation*}
H_s = H_{s - 1}  + \frac{1}{s} \qquad \text{and} \qquad O_s = O_{s - 1} + \frac{1}{2s - 1},
\end{equation*}
with $H_0=0$ and $O_0=0$. One way to generalize this definition is to consider harmonic numbers $H_s^{(m)}$ 
and odd harmonic numbers $O_s^{(m)}$ of order $m\in\mathbb C$ that are defined by
\begin{equation*}
H_s^{(m)} = H_{s - 1}^{(m)} + \frac{1}{s^m} \qquad \text{and} \qquad O_s^{(m)} = O_{s - 1}^{(m)} + \frac{1}{(2s - 1)^m},
\end{equation*}
with $H_0^{(m)}=0$ and $O_0^{(m)}=0$ so that $H_s=H_s^{(1)}$ and $O_s=O_s^{(1)}$. The recurrence relations imply that 
if $s=n$ is a non-negative integer, then
\begin{equation*}
H_n^{(m)} = \sum_{j = 1}^n \frac{1}{j^m} \qquad \text{and} \qquad O_n^{(m)} = \sum_{j = 1}^n \frac{1}{(2j - 1)^m}.
\end{equation*}
In particular, we have the connection between harmonic numbers and the psi (or digamma) function \cite{Srivastava}
\begin{equation}\label{gen_harmonic}
H_z = \psi (z + 1) + \gamma,
\end{equation}
where $\gamma$ is the Euler-Mascheroni constant.

\begin{lemma}\label{lem.integ}
For $u,v\in\mathbb C\setminus\mathbb Z^{-}$, we have
\begin{equation}\label{beta}
\int_0^1 x^u (1 - x)^v dx = \frac{1}{\binom{{u + v + 1}}{{u + 1}} (u + 1)}.
\end{equation}
\end{lemma}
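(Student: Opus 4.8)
The plan is to recognize the left-hand integral as a value of the Euler Beta function and then reconcile the Beta--Gamma evaluation with the generalized binomial coefficient appearing on the right of \eqref{beta}. Concretely, for $\Re(u) > -1$ and $\Re(v) > -1$ the integral converges absolutely and equals $B(u+1,v+1)$, so the classical relation $B(a,b) = \Gamma(a)\Gamma(b)/\Gamma(a+b)$ yields
\[
\int_0^1 x^u (1-x)^v \, dx = \frac{\Gamma(u+1)\,\Gamma(v+1)}{\Gamma(u+v+2)}.
\]

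Next I would rewrite the right-hand side of \eqref{beta} through Gamma functions and check that it matches this value. Using the definition of the generalized binomial coefficient $\binom{z}{w} = \Gamma(z+1)/\bigl(\Gamma(w+1)\Gamma(z-w+1)\bigr)$ with $z = u+v+1$ and $w = u+1$ gives $\binom{u+v+1}{u+1} = \Gamma(u+v+2)/\bigl(\Gamma(u+2)\Gamma(v+1)\bigr)$. Substituting this and invoking the functional equation $\Gamma(u+2) = (u+1)\Gamma(u+1)$ collapses the factor $(u+1)$, so that $1/\bigl(\binom{u+v+1}{u+1}(u+1)\bigr)$ equals exactly $\Gamma(u+1)\Gamma(v+1)/\Gamma(u+v+2)$, matching the Beta evaluation above. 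This settles the identity on the product of half-planes $\Re(u), \Re(v) > -1$.

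The main subtlety --- and the step I expect to demand the most care --- is extending the equality to the full domain $u,v\in\mathbb{C}\setminus\mathbb{Z}^-$, since the integral itself converges only for $\Re(u),\Re(v) > -1$. Here I would argue by analytic continuation: the right-hand side of \eqref{beta}, written via Gamma functions, is meromorphic in $(u,v)$ with singularities confined to $u\in\mathbb{Z}^-$ or $v\in\mathbb{Z}^-$, and it agrees with the convergent integral on an open subset of $\mathbb{C}^2$; by uniqueness of meromorphic continuation the closed form therefore provides the value of the integral (suitably regularized) throughout $\mathbb{C}\setminus\mathbb{Z}^-$. Alternatively, one can bypass continuation by integrating by parts in the convergent regime to obtain the recurrence in $u$ that both sides satisfy, together with a base case, which is the route I would prefer if the integral on the left is to be read as a regularized value rather than a literal Lebesgue integral.
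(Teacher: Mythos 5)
Your proposal is correct and follows the same route the paper takes: the paper's proof simply cites the well-known Beta function integral, and your argument spells out exactly that identification, $B(u+1,v+1)=\Gamma(u+1)\Gamma(v+1)/\Gamma(u+v+2)$, together with the Gamma-function rewriting of the generalized binomial coefficient. Your added remark on analytic continuation beyond the half-planes $\Re(u),\Re(v)>-1$ is a reasonable precaution that the paper leaves implicit.
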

\begin{proof}
The identity~\eqref{beta} is the well-known Beta function integral.
\end{proof}

\begin{lemma}\label{lem.ho}
If $n$ is an integer, then
\begin{gather}
H_{n - 1/2} = 2O_n - 2\ln 2 \label{eq.zts4fm1} \\
\nonumber H_{n - 1/2} - H_{- 1/2} = 2O_n,\label{eq.plh634k} \\
\nonumber H_{n - 1/2} - H_{1/2} = 2\left( {O_n - 1} \right), \label{eq.hgplrbd}\\
\nonumber H_{n + 1/2} - H_{- 1/2} = 2O_{n + 1}, \label{eq.ivi1ex5} \\
\nonumber H_{n + 1/2} - H_{1/2} = 2\left( {O_{n + 1} - 1} \right),\label{eq.u6ng5d6}\\
\nonumber H_{n + 1/2} - H_{n - 1/2} = \frac{2}{{2n + 1}},\\
\nonumber H_{n - 1/2} - H_{-3/2} = 2\left( {O_n - 1} \right)\label{eq.pobmr6h},\\
\nonumber H_{n + 1/2} - H_{-3/2} = 2\left( {O_{n + 1} - 1} \right).
\end{gather}
\end{lemma}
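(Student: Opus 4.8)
The plan is to reduce every identity in the lemma to two ingredients: the defining recurrence $H_s = H_{s-1} + 1/s$ evaluated at half-integer arguments, and the single boundary value $H_{-1/2} = -2\ln 2$. First I would record the half-integer form of the recurrence, namely
\[
H_{s - 1/2} - H_{s - 3/2} = \frac{1}{s - 1/2} = \frac{2}{2s - 1},
\]
whose right-hand side is exactly twice the $s$th summand defining $O_s$. Telescoping this from the base point $0$ up to $n$ yields
\[
H_{n - 1/2} - H_{-1/2} = \sum_{k=1}^n \frac{2}{2k - 1} = 2 O_n,
\]
which is the second displayed identity. This telescoping is the structural heart of the argument, and because it uses only the recurrences it remains valid for every integer $n$ (for $n<0$ one simply telescopes in the opposite direction, matching the recurrence that defines $O_n$ there).

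Next I would pin down the constant. Using the digamma connection \eqref{gen_harmonic} we have $H_{-1/2} = \psi(1/2) + \gamma$, and invoking the classical special value $\psi(1/2) = -\gamma - 2\ln 2$ gives $H_{-1/2} = -2\ln 2$. Substituting this into the telescoped identity immediately produces the first identity $H_{n-1/2} = 2O_n - 2\ln 2$. This digamma evaluation is the only step that is not purely formal manipulation of the recurrences; it is standard, but it is precisely where the constant $-2\ln 2$ enters, and so I expect it to be the one genuine input rather than an obstacle.

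The remaining identities then follow by elementary shifts of the base point, with no further analysis. From the recurrence one computes $H_{1/2} = H_{-1/2} + 2$ and $H_{-3/2} = H_{-1/2} + 2$, so that $H_{1/2} = H_{-3/2} = 2 - 2\ln 2$; subtracting either from the second identity gives the third and seventh identities $H_{n-1/2} - H_{1/2} = H_{n-1/2} - H_{-3/2} = 2(O_n - 1)$. Replacing $n$ by $n+1$ in the telescoped identity (equivalently reading $H_{n+1/2} = H_{(n+1)-1/2}$) yields the fourth identity $H_{n+1/2} - H_{-1/2} = 2O_{n+1}$, and subtracting $H_{1/2} - H_{-1/2} = 2$ and $H_{-3/2} - H_{-1/2} = 2$ from it gives the fifth and eighth. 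Finally the sixth identity is a single application of the recurrence, since $H_{n+1/2} - H_{n-1/2} = 1/(n+1/2) = 2/(2n+1)$.
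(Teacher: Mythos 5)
Your proof is correct, and it reaches the same conclusions by a slightly different route than the paper. The paper's proof simply combines $H_z=\psi(z+1)+\gamma$ from \eqref{gen_harmonic} with the quoted digamma formula at half-integer arguments, $\psi(n+1/2)=-\gamma-2\ln 2+2\sum_{k=1}^{n}\frac{1}{2k-1}$, which already \emph{is} the first identity; everything else is then arithmetic. You instead take as your only external input the single constant $\psi(1/2)=-\gamma-2\ln 2$ (the $n=0$ case of that formula) and recover the general half-integer evaluation by telescoping the defining recurrence $H_{s-1/2}=H_{s-3/2}+\frac{2}{2s-1}$ against the recurrence for $O_n$. This buys you two small things: the argument is more self-contained, resting on the recurrences the paper adopts as definitions plus one classical constant; and it transparently covers negative integers $n$ (by telescoping in the other direction), whereas the paper's cited formula is stated with a sum $\sum_{k=1}^{n}$ and implicitly assumes $n\ge 0$. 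Your verifications of the shifted base values $H_{1/2}=H_{-3/2}=2-2\ln 2$ and of the remaining six identities as differences of the telescoped one are all correct.
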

\begin{proof}
Use \eqref{gen_harmonic} as the definition of the harmonic numbers in conjunction with the known result for the digamma function 
at half-integer arguments \cite[Eq. (51)]{Srivastava}, namely,
\begin{equation*}
\psi (n + 1/2) = - \gamma - 2\ln 2 + 2\sum_{k = 1}^n \frac{1}{2k - 1}.
\end{equation*}
\end{proof}

\begin{lemma}\label{lem.binomial}
We have
\begin{gather}
\binom{{r + 1/2}}{s} = \frac{{\binom{{2r + 1}}{{2s}}\binom{{2s}}{s}}}{{\binom{{r}}{s}2^{2s}}}, \label{eq.binomrs1}\\
\nonumber \binom{{1/2}}{r} = \left( { - 1} \right)^{r + 1} \frac{{\binom{{2r}}{r}}}{{2^{2r} \left( {2r - 1} \right)}},\\
\binom{{r - 1/2}}{s} = \binom{{2r}}{r}\binom{{r}}{s}\frac{1}{{\binom{{2\left( {r - s} \right)}}{{r - s}}2^{2s} }},\label{eq.s83s2pw}\\
\binom{{- 1/2}}{r} = (- 1)^r \binom{{2r}}{r}2^{- 2r} \label{eq_r1div2},\\
\nonumber \binom{{r}}{{s + 1/2}} = \frac{1}{\pi }\frac{{2^{2r + 2} }}{{s + 1}}\binom{{r}}{s}\binom{{2\left( {r - s} \right)}}{{r - s}}^{ - 1} \binom{{2\left( {s + 1} \right)}}{{s + 1}}^{ - 1} .
\end{gather}
\end{lemma}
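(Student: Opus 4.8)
The plan is to write every generalized binomial coefficient as a ratio of Gamma functions via $\binom{a}{b}=\Gamma(a+1)/\bigl(\Gamma(b+1)\,\Gamma(a-b+1)\bigr)$, and then eliminate all half-integer arguments using the Legendre duplication formula
\[
\Gamma(z)\,\Gamma\!\left(z+\tfrac12\right)=2^{1-2z}\sqrt{\pi}\,\Gamma(2z),
\]
together with the special value $\Gamma(1/2)=\sqrt{\pi}$. The single most useful consequence, which I would record first as a building block, is the central-binomial relation
\[
\frac{\Gamma(r+1/2)}{\Gamma(r+1)}=\frac{\sqrt{\pi}}{2^{2r}}\binom{2r}{r},
\]
obtained by combining $\Gamma(2r+1)=\binom{2r}{r}\Gamma(r+1)^2$ with the duplication formula applied at $z=r+1/2$. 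Each identity in the lemma then becomes a matter of substituting the Gamma representation, inserting this relation (and its shifts) wherever a half-integer argument appears, and collecting the resulting powers of $2$ and factors of $\sqrt{\pi}$.

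The two cleanest cases I would dispatch directly. For $\binom{-1/2}{r}=(-1)^r\binom{2r}{r}2^{-2r}$, I would expand the falling factorial $(-1/2)(-3/2)\cdots(-1/2-r+1)/r!$, factor out $(-1)^r 2^{-r}$, and recognize the product $1\cdot 3\cdots(2r-1)=(2r)!/(2^r r!)$, which yields $(-1)^r(2r)!/(2^{2r}(r!)^2)$ immediately. The companion formula for $\binom{1/2}{r}$ follows the same route, the extra factor $1/(2r-1)$ coming from the leading term $1/2$ in the numerator product.

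For the two main reduction formulas \eqref{eq.binomrs1} and \eqref{eq.s83s2pw}, I would apply the duplication formula to the single half-integer Gamma factor appearing in the numerator or denominator and then clear the resulting $\Gamma(2r+1)$, $\Gamma(2(r-s))$, etc.\ back into ordinary binomial coefficients. Here each identity contains exactly one half-integer shift, so precisely one factor of $\sqrt{\pi}$ is produced by the duplication formula and it cancels against the $\sqrt{\pi}$ carried by the central binomial coefficient, leaving the displayed $\pi$-free expressions; the remaining work is bookkeeping of the powers of $2$.

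The main obstacle will be the final identity for $\binom{r}{s+1/2}$, which is the only one retaining a factor of $\pi$. Writing it as $\Gamma(r+1)/\bigl(\Gamma(s+3/2)\,\Gamma(r-s-1/2)\bigr)$, both Gamma factors in the denominator carry half-integer arguments, so applying the duplication formula to each (at $z=s+1$ and, after a unit shift $\Gamma(r-s-1/2)=\Gamma(r-s+1/2)/(r-s-1/2)$, at $z=r-s$) releases two factors of $\sqrt{\pi}$, whose product $\sqrt{\pi}\cdot\sqrt{\pi}=\pi$ lands in the denominator and explains the leading $1/\pi$. The delicate part is then tracking the accumulated powers of $2$ down to the stated $2^{2r+2}$ and correctly producing the $1/(s+1)$ factor from the identities $\Gamma(2s+3)=(2s+2)\Gamma(2s+2)$ and $\Gamma(s+2)=(s+1)\Gamma(s+1)$ when the central binomial coefficients $\binom{2(r-s)}{r-s}$ and $\binom{2(s+1)}{s+1}$ are reassembled; I expect this constant-chasing, rather than any conceptual difficulty, to be where errors are most likely to creep in.
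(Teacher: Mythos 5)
Your proposal is correct and follows essentially the same route as the paper, whose entire proof consists of the remark that these identities are ``easy to derive using the Gamma function and some of its special values'' together with a citation of Gould's tables --- you have simply filled in the duplication-formula details the paper omits. One small transcription slip: $\binom{r}{s+1/2}=\Gamma(r+1)/\bigl(\Gamma(s+3/2)\,\Gamma(r-s+1/2)\bigr)$ (not $\Gamma(r-s-1/2)$ in the second denominator factor), so the unit shift you describe is unnecessary and the duplication formula applies directly at $z=r-s$; the rest of the constant-chasing then goes through as you outline.
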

\begin{proof}
These consequences of the generalized binomial coefficients, are easy to derive using the Gamma function and some of its special values. At any rate they are 
entries in Gould's book \cite{Gould1}.
\end{proof}

\section{General results for arbitrary polynomial identities}\label{Section_GerneraFramework}

In this section, we introduce a set of general results that are applicable for a large family of polynomial identities. 
The polynomial identities under consideration are of the general form
\begin{equation}
    \sum\limits_{k=l_1}^{u_1} f(k)W_k(t) = \sum\limits_{k=l_2}^{u_2} g(k)V_k(t), \label{eq:polynomial_fgWV}
\end{equation}
where $W_k, V_k$ are polynomials belonging to the following set (we skip the dependence on $k$ and $n$ to simplify the notation)
$$BP=\large\{t^k, t^{n-k}, (1-t)^{k}, (1-t)^{n-k}, t^k(1-t)^{n-k}, t^{n-k}(1-t)^k\large\},$$
$l_1, l_2, u_1, u_2$ are non-negative integers, $f(k)$ and $g(k)$ are sequences and $t$ is a complex variable. 
These will be our standing assumptions for the rest of the paper. To deduce the results in topic, we perform a series of basic 
operations in order, which we call \textit{the (general) algorithm}. We describe that in details in the following example. 

\begin{example}\label{example:onecase}
    Suppose that the polynomial identity contains on one of the sides the following sum:
    \begin{equation}
        \sum\limits_{k=l_1}^{u_1}f(k)(1-t)^k.\label{eq:side_1}
    \end{equation}
    We now multiply that by $(1-t)^rt^{s-1}$ to obtain
    $$\sum\limits_{k=l_1}^{u_1}f(k)(1-t)^{k+r}t^{s-1}.$$
    The next step is to integrate termwise from $0$ to $1$ and use the Beta integral representation \eqref{beta}, which gives
    $$\sum\limits_{k=l_1}^{u_1}f(k)\frac{1}{s\cdot \binom{k+r+s}{s}}.$$
    The final usual step is to substitute $r\mapsto r-s$ and multiply the equation by $s$ to eliminate an unwanted factor in the denominator. This gives the final form of the identity without harmonic numbers
    \begin{equation}
        \sum\limits_{k=l_1}^{u_1}f(k)\frac{1}{\binom{k+r}{s}}\label{eq:side_1_final}
    \end{equation}
    of \eqref{eq:side_1}.
    Note that \eqref{eq:side_1} is just a part of some, unspecified, polynomial identity and a set of operations described above is performed on both sides of that identity.

We can now perform one more operation to ''generate'' harmonic numbers from \eqref{eq:side_1_final}. We differentiate that with 
respect to $s$. In case of \eqref{eq:side_1_final} this gives the following sum:
    \begin{equation}
        \label{eq:side_1_final_diff_s} 
        \sum\limits_{k=l_1}^{u_1}f(k)\frac{H_s-H_{k+r-s}}{\binom{k+r}{s}}
    \end{equation}
    where $s, r\in\mathbb{C}\setminus\mathbb{Z}^-$, $s\neq 0$ and $r-s\notin \mathbb{Z}^-$, which are again the standing assumptions for these kind of terms. We can also differentiate with respect to $r$, which gives
    \begin{equation}
        \label{eq:side_1_final_diff_r} 
        \sum\limits_{k=l_1}^{u_1}f(k)\frac{H_{k+r-s}-H_{k+r}}{\binom{k+r}{s}}.
    \end{equation}
\end{example}

The above example allows us to transform any identity \eqref{eq:polynomial_fgWV} with polynomials in the set $BP$ to an identity involving binomial coefficients and harmonic numbers. 

The polynomial identities under consideration that can be applied to the described method are of the following form (which we will refer to as \textit{standard forms})
 \allowdisplaybreaks
 \begin{align}
   \label{eq:id_gen_1}\sum\limits_{k=l_1}^{u_1}f(k)t^k&=\sum\limits_{k=l_2}^{u_2}g(k)t^k,\\
    \label{eq:id_gen_2}\sum\limits_{k=l_1}^{u_1}f(k)t^{n-k}&=\sum\limits_{k=l_2}^{u_2}g(k)t^k,\\
    \label{eq:id_gen_3}\sum\limits_{k=l_1}^{u_1}f(k)(1-t)^k&=\sum\limits_{k=l_2}^{u_2}g(k)t^k,\\
   \label{eq:id_gen_4}\sum\limits_{k=l_1}^{u_1}f(k)(1-t)^{n-k}&=\sum\limits_{k=l_2}^{u_2}g(k)t^k,\\
    \label{eq:id_gen_5}\sum\limits_{k=l_1}^{u_1}f(k)t^k(1-t)^{n-k}&=\sum\limits_{k=l_2}^{u_2}g(k)t^k,\\
    \label{eq:id_gen_6}\sum\limits_{k=l_1}^{u_1}f(k)t^{n-k}(1-t)^k&=\sum\limits_{k=l_2}^{u_2}g(k)t^k.
    \end{align}
We note that for example
$$\sum\limits_{k=l_1}^{u_1}f(k)(1+t)^k=\sum\limits_{k=l_2}^{u_2}g(k)t^k$$
is not on the above list, but substitution $t\mapsto -t$ shows it is of the form \eqref{eq:id_gen_3}. All standard forms have their right-hand-side equal and it is just a matter of simple changes to work with any possible identity in the BP set.

Let us explicitly formulate the results for identities \eqref{eq:id_gen_3} and \eqref{eq:id_gen_5}, but the very similar results 
(with obvious and necessary changes) can be derived and presented for the remaining cases.

\begin{lemma}\label{lemma:general_identity_f(k)g(k)}
Let an arbitrary polynomial identity have the following form:
 \begin{equation}\label{eq:general_idenitty_f(k)g(k)}
 \sum\limits_{k=l_1}^{u_1}f(k)(1-t)^k=\sum\limits_{k=l_2}^{u_2}g(k)t^k.
 \end{equation}
Let $r,s\in \mathbb{C}\setminus\mathbb{Z}^-$ are such that $s\neq 0$ and $r-s\notin\mathbb{Z}^-$. Then the following identity holds:
 \begin{equation}\label{eq:general_idenitty_f(k)g(k)_2}
 \sum\limits_{k=l_1}^{u_1}f(k)\frac{1}{\binom{k+r}{s}}=\sum\limits_{k=l_2}^{u_2}g(k)\frac{s}{r-s+1}\frac{1}{\binom{k+r}{r-s+1}}
 \end{equation}
Furthermore, if $s,r\in\mathbb C\setminus\mathbb Z^{-}$ such that $s\ne 0$ and $r-s\not\in\mathbb Z^{-}$, then
 \begin{align}
 \sum\limits_{k=l_1}^{u_1}f(k)\frac{H_s-H_{k+r-s}}{\binom{k+r}{s}}&=\sum\limits_{k=l_2}^{u_2}g(k)\frac{s+1}{(r-s+1)^2}\frac{1}{\binom{k+r}{r-s+1}}+\sum\limits_{k=l_2}^{u_2}g(k)\frac{s}{r-s+1}\frac{H_{k+s-1}-H_{r-s+1}}{\binom{k+r}{r-s+1}}.\label{eq:general_idenitty_f(k)g(k)_2_diff_s}
 \end{align}
\end{lemma}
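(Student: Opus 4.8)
The plan is to apply the algorithm of Example \ref{example:onecase} to both sides of \eqref{eq:general_idenitty_f(k)g(k)} and then differentiate. First I would establish \eqref{eq:general_idenitty_f(k)g(k)_2}. The left side $\sum_k f(k)(1-t)^k$ is already in the shape of \eqref{eq:side_1}, so it transforms directly into \eqref{eq:side_1_final}, namely $\sum_k f(k)\binom{k+r}{s}^{-1}$. For the right side I multiply $\sum_k g(k)t^k$ by $(1-t)^r t^{s-1}$ to get $\sum_k g(k)t^{k+s-1}(1-t)^r$ and integrate term-by-term over $[0,1]$; Lemma \ref{lem.integ} with $u=k+s-1$ and $v=r$ yields $\sum_k g(k)\frac{1}{(k+s)\binom{k+r+s}{r}}$, where I used $\binom{k+r+s}{k+s}=\binom{k+r+s}{r}$. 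After the substitution $r\mapsto r-s$ and multiplication by $s$, the absorption identity $(k+s)\binom{k+r}{r-s}=(r-s+1)\binom{k+r}{r-s+1}$ converts this into the stated right side of \eqref{eq:general_idenitty_f(k)g(k)_2}.

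For the harmonic identity \eqref{eq:general_idenitty_f(k)g(k)_2_diff_s} I would differentiate \eqref{eq:general_idenitty_f(k)g(k)_2} with respect to $s$ (holding $r$ fixed), exactly the operation that carries \eqref{eq:side_1_final} to \eqref{eq:side_1_final_diff_s} in the Example. On the left, writing $\binom{k+r}{s}^{-1}=\Gamma(s+1)\Gamma(k+r-s+1)/\Gamma(k+r+1)$ and differentiating logarithmically gives the factor $\psi(s+1)-\psi(k+r-s+1)$, which by \eqref{gen_harmonic} equals $H_s-H_{k+r-s}$; this reproduces the left side of \eqref{eq:general_idenitty_f(k)g(k)_2_diff_s} at once.

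The real work is on the right. I would write a single summand as $F(s)=\frac{s}{r-s+1}\binom{k+r}{r-s+1}^{-1}$ and first simplify it using $\binom{k+r}{r-s+1}^{-1}=\Gamma(r-s+2)\Gamma(k+s)/\Gamma(k+r+1)$ together with $\Gamma(r-s+2)=(r-s+1)\Gamma(r-s+1)$, so that the explicit denominator cancels and $F(s)=s\,\Gamma(r-s+1)\Gamma(k+s)/\Gamma(k+r+1)$. Logarithmic differentiation then gives $F'(s)=F(s)\bigl(\tfrac1s-\psi(r-s+1)+\psi(k+s)\bigr)=F(s)\bigl(\tfrac1s+H_{k+s-1}-H_{r-s}\bigr)$, again via \eqref{gen_harmonic}. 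The expected obstacle is purely the bookkeeping of reassembling this into the two stated sums: one must split off the harmonic part and convert $H_{r-s}$ into $H_{r-s+1}$ using the recurrence $H_{r-s+1}=H_{r-s}+\frac{1}{r-s+1}$. This produces the harmonic sum $\frac{s}{r-s+1}\frac{H_{k+s-1}-H_{r-s+1}}{\binom{k+r}{r-s+1}}$ exactly, while the leftover non-harmonic pieces combine as $\frac{1}{r-s+1}+\frac{s}{(r-s+1)^2}=\frac{r+1}{(r-s+1)^2}$. Before finalizing I would reconcile this coefficient with the stated $\frac{s+1}{(r-s+1)^2}$ by a quick numerical check (e.g.\ $k=0$, $r=2$, $s=1$, where $F'(s)=0$), since pinning down the precise non-harmonic coefficient is the one genuinely delicate point in the derivation.
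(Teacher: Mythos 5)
Your proposal follows exactly the route the paper intends: the lemma is an instance of the general algorithm of Example \ref{example:onecase} (multiply by $(1-t)^rt^{s-1}$, integrate via Lemma \ref{lem.integ}, substitute $r\mapsto r-s$, rescale by $s$, then differentiate in $s$ using the Gamma-function representation of the reciprocal binomial coefficients), and every step you carry out — including the absorption identity $(k+s)\binom{k+r}{r-s}=(r-s+1)\binom{k+r}{r-s+1}$ and the logarithmic differentiation giving $\tfrac1s+H_{k+s-1}-H_{r-s}$ — is correct.

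The one point of substance is the non-harmonic coefficient, and here your derivation is right and the statement as printed is not: $\tfrac1{r-s+1}+\tfrac{s}{(r-s+1)^2}=\tfrac{r+1}{(r-s+1)^2}$, not $\tfrac{s+1}{(r-s+1)^2}$. The numerical check you propose confirms this (with $k=0$, $r=2$, $s=1$ the summand $F(s)=\tfrac12\Gamma(3-s)\Gamma(s+1)$ has $F'(1)=0$, which forces the coefficient $\tfrac{r+1}{4}=\tfrac34$ to cancel $\tfrac{s}{r-s+1}(H_0-H_2)=-\tfrac34$; the value $\tfrac{s+1}{4}=\tfrac12$ does not). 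The paper itself silently uses the correct coefficient $\tfrac{r+1}{(r-s+1)^2}$ in its later application \eqref{eq.boyhar2}, so the $s+1$ in the lemma is a typo rather than a genuine disagreement; your proof should simply state the identity with $r+1$.
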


In the above lemma, we differentiated with respect to $s$ to obtain \eqref{eq:general_idenitty_f(k)g(k)_2_diff_s}. In the next lemma, 
we differentiated with respect to $r$ to obtain \eqref{eq:general_idenitty_f(k)g(k)x(1-x)x_diff_s}.

\begin{lemma}\label{lemma:general_identity_f(k)g(k)x(1-x)x}
Let an arbitrary polynomial identity have the following form:
 \begin{equation}\label{eq:general_idenitty_f(k)g(k)x(1-x)x}
 \sum\limits_{k=l_1}^{u_1}f(k)t^k(1-t)^{n-k}=\sum\limits_{k=l_2}^{u_2}g(k)t^k.
 \end{equation}
Let $r,s\in \mathbb{C}\setminus\mathbb{Z}^-$ are such that $s\neq 0$ and $r-s\notin\mathbb{Z}^-$. Then the following identity holds:
 \begin{equation}\label{eq:general_idenitty_f(k)g(k)x(1-x)x}
 \sum\limits_{k=l_1}^{u_1}\frac{f(k)}{(k+s)\binom{n+r}{k+s}}=\sum\limits_{k=l_1}^{u_1}\frac{g(k)}{(k+s)\binom{k+r}{k+s}}
 \end{equation}
Furthermore, if $s,r\in\mathbb C\setminus\mathbb Z^{-}$ such that $s\ne 0$ and $r-s\not\in\mathbb Z^{-}$, then
 \begin{align}
 \sum\limits_{k=l_1}^{u_1}f(k)\frac{H_{n-k+r-s}-H_{n+r}}{(k+s)\binom{n+r}{k+s}}
 =\sum\limits_{k=l_1}^{u_1}g(k)\frac{H_{r-s}-H_{k+r}}{(k+s)\binom{k+r}{k+s}}
        \label{eq:general_idenitty_f(k)g(k)x(1-x)x_diff_s}
 \end{align}
\end{lemma}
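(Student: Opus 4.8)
The plan is to apply the general algorithm of Example~\ref{example:onecase} directly to the given polynomial identity, treating the two sides separately. First I would multiply both sides by the weight $(1-t)^r t^{s-1}$ and then integrate term by term from $0$ to $1$; since both sums are finite, termwise integration needs no justification and reduces the whole problem to evaluating a Beta integral for each summand via Lemma~\ref{lem.integ}.

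On the left-hand side the generic term $f(k)t^k(1-t)^{n-k}$ becomes $f(k)t^{k+s-1}(1-t)^{n-k+r}$, so in the notation of \eqref{beta} I would set $u=k+s-1$ and $v=n-k+r$, giving $u+1=k+s$ and $u+v+1=n+r+s$; the integral thus contributes $f(k)/\bigl((k+s)\binom{n+r+s}{k+s}\bigr)$. On the right-hand side the term $g(k)t^k$ becomes $g(k)t^{k+s-1}(1-t)^r$, so that $u=k+s-1$, $v=r$, $u+v+1=k+r+s$, and the integral contributes $g(k)/\bigl((k+s)\binom{k+r+s}{k+s}\bigr)$. Performing the substitution $r\mapsto r-s$ (which sends $n+r+s\mapsto n+r$ and $k+r+s\mapsto k+r$) then yields the first claimed identity. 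Here, unlike in the example, the spurious denominator factor is $k+s$ rather than a constant $s$, so there is no clearing step; one simply keeps it.

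To produce the harmonic numbers I would differentiate this binomial identity with respect to $r$. Writing each reciprocal binomial coefficient through the Gamma function, $1/\binom{n+r}{k+s}=\Gamma(k+s+1)\Gamma(n+r-k-s+1)/\Gamma(n+r+1)$ and similarly for $1/\binom{k+r}{k+s}$, the logarithmic derivative in $r$ is a difference of digamma values, which I would convert to harmonic numbers through \eqref{gen_harmonic}. This gives $\tfrac{d}{dr}\ln\bigl(1/\binom{n+r}{k+s}\bigr)=H_{n-k+r-s}-H_{n+r}$ on the left and $\tfrac{d}{dr}\ln\bigl(1/\binom{k+r}{k+s}\bigr)=H_{r-s}-H_{k+r}$ on the right, which are exactly the bracketed factors appearing in \eqref{eq:general_idenitty_f(k)g(k)x(1-x)x_diff_s}. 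Since the sums are finite, differentiation commutes with summation without comment.

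The computation is essentially routine; the only points demanding care are bookkeeping and domain control. The main obstacle I anticipate is keeping the index shifts straight across the substitution $r\mapsto r-s$---in particular verifying that the digamma arguments collapse to precisely $H_{n-k+r-s}$, $H_{n+r}$, $H_{r-s}$, and $H_{k+r}$---and confirming that the standing hypotheses $r,s\in\mathbb C\setminus\mathbb Z^-$, $s\neq0$, $r-s\notin\mathbb Z^-$ keep every Gamma factor finite and every binomial coefficient nonzero, so that the reciprocals are analytic in $r$ and the term-by-term differentiation is legitimate.
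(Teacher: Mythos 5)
Your proposal is correct and follows exactly the route the paper intends: multiply by $(1-t)^r t^{s-1}$, integrate termwise via the Beta integral of Lemma~\ref{lem.integ}, substitute $r\mapsto r-s$, and then differentiate with respect to $r$ (the paper explicitly notes that \eqref{eq:general_idenitty_f(k)g(k)x(1-x)x_diff_s} arises from the $r$-derivative). Your bookkeeping of the Gamma-function arguments and the resulting harmonic-number differences is accurate, and you correctly observe that here the residual factor is $k+s$ rather than a clearable constant $s$.
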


More identities that are special cases of the above and general ones can be deduced. For example, we have the following.
\begin{corollary}\label{cor:general_identity_f(k)g(k)}
Under the assumptions of Lemma \ref{lemma:general_identity_f(k)g(k)}, we have
 \begin{align*}
 \sum\limits_{k=l_1}^{u_1}\frac{f(k)}{\binom{k+s}{s}}&=\sum\limits_{k=l_2}^{u_2}\frac{g(k)s}{k+s}, &
 \sum\limits_{k=l_1}^{u_1}\frac{f(k)}{k+1}&=\sum\limits_{k=l_2}^{u_2}\frac{g(k)}{k+1},
 \end{align*}
and 
 \begin{align*}
 \sum\limits_{k=l_1}^{u_1}f(k)\frac{H_s-H_{k}}{\binom{k+s}{s}}&=\sum\limits_{k=l_2}^{u_2}g(k)\frac{s+1}{k+s}+\sum\limits_{k=l_2}^{u_2}g(k)s   \frac{H_{k+s-1}-1}{k+s},\\
 \sum\limits_{k=l_1}^{u_1}f(k)\frac{1-H_{k}}{k+1}&=\sum\limits_{k=l_2}^{u_2}g(k)\frac{2}{k+1}+\sum\limits_{k=l_2}^{u_2}g(k)\frac{H_{k}-1}{k+1},\\
 \end{align*}
\end{corollary}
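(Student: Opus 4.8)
The plan is to obtain all four identities as direct specializations of the two displayed identities in Lemma~\ref{lemma:general_identity_f(k)g(k)}; no computation beyond substituting particular values of $r$ and $s$ is required. The key observation is that the parameter choice $r=s$ collapses each right-hand binomial coefficient $\binom{k+r}{r-s+1}$ to $\binom{k+s}{1}=k+s$, while the further choice $s=1$ uses only $H_1=1$ together with $\binom{k+1}{1}=k+1$.

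First I would derive the two harmonic-number-free identities from \eqref{eq:general_idenitty_f(k)g(k)_2}. Setting $r=s$ turns the left-hand binomial into $\binom{k+s}{s}$ and the right-hand factor into $\frac{s}{r-s+1}\frac{1}{\binom{k+r}{r-s+1}}=\frac{s}{1}\cdot\frac{1}{k+s}$, which yields $\sum_{k}\frac{f(k)}{\binom{k+s}{s}}=\sum_{k}\frac{g(k)s}{k+s}$, the first identity. Specializing this once more to $s=1$ and using $\binom{k+1}{1}=k+1$ gives the second identity $\sum_{k}\frac{f(k)}{k+1}=\sum_{k}\frac{g(k)}{k+1}$.

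Next I would treat the two identities carrying harmonic numbers, starting from \eqref{eq:general_idenitty_f(k)g(k)_2_diff_s}. Putting $r=s$ makes the left-hand numerator $H_s-H_{k+r-s}=H_s-H_k$ and the left-hand binomial $\binom{k+s}{s}$; on the right, the first sum becomes $\sum_{k}g(k)\frac{s+1}{(r-s+1)^2}\frac{1}{\binom{k+r}{r-s+1}}=\sum_{k}g(k)\frac{s+1}{k+s}$, while the second becomes $\sum_{k}g(k)\frac{s}{1}\frac{H_{k+s-1}-H_1}{k+s}=\sum_{k}g(k)s\frac{H_{k+s-1}-1}{k+s}$, since $H_{r-s+1}=H_1=1$. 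This is exactly the third identity. Finally, setting $s=1$ in the third identity and again using $H_1=1$ — so that the left numerator reads $1-H_k$ and the right factors simplify to $\frac{2}{k+1}$ and $\frac{H_k-1}{k+1}$ — produces the fourth identity.

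The only point requiring care is that the substitutions remain inside the admissible parameter region. Since $\mathbb{Z}^-$ denotes the negative integers, the choice $r=s$ gives $r-s=0\notin\mathbb{Z}^-$, and $s=1$ keeps $s\neq 0$; hence the hypotheses $s\neq 0$, $r,s\in\mathbb{C}\setminus\mathbb{Z}^-$, and $r-s\notin\mathbb{Z}^-$ of Lemma~\ref{lemma:general_identity_f(k)g(k)} continue to hold throughout. There is consequently no genuine obstacle: the entire argument reduces to verifying these evaluations of binomial coefficients and harmonic numbers at the specialized parameters.
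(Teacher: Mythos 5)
Your proposal is correct and follows exactly the paper's own route: substitute $r=s$ into the two identities of Lemma~\ref{lemma:general_identity_f(k)g(k)} to get the first identity of each pair, then set $s=1$ (using $H_1=1$ and $\binom{k+1}{1}=k+1$) to get the second. Your additional check that these specializations stay within the admissible parameter region is a small but welcome extra beyond what the paper states.
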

\begin{proof}
Substitute $r=s$ in Lemma \ref{lemma:general_identity_f(k)g(k)} to obtain the first identity in each set. 
Then, take $s=1$ to obtain the second one.
\end{proof}

Similar set of results can be made after applying Lemma \ref{lemma:general_identity_f(k)g(k)x(1-x)x}.
\begin{corollary}\label{cor:general_identity_f(k)g(k)x(1-x)}
Under the assumptions of Lemma \ref{lemma:general_identity_f(k)g(k)x(1-x)x}, we have
 \begin{align*}
 \sum\limits_{k=l_1}^{u_1}\frac{f(k)}{(k+s)\binom{n+s}{k+s}}&=\sum\limits_{k=l_2}^{u_2}\frac{g(k)}{k+s},&
 \sum\limits_{k=l_1}^{u_1}\frac{f(k)}{(k+1)\binom{n+1}{k+1}}&=\sum\limits_{k=l_2}^{u_2}\frac{g(k)}{k+1},
 \end{align*}
and 
 \begin{align*}
 \sum\limits_{k=l_1}^{u_1}f(k)\frac{H_{n-k}-H_{n+s}}{(k+s)\binom{n+s}{k+s}}&=-\sum\limits_{k=l_2}^{u_2}g(k)\frac{H_{k+s}}{k+s},\\
 \sum\limits_{k=l_1}^{u_1}f(k)\frac{H_{n-k}-H_{n+1}}{(k+1)\binom{n+1}{k+1}}&=-\sum\limits_{k=l_2}^{u_2}g(k)\frac{H_{k+1}}{k+1},
 \end{align*}     
\end{corollary}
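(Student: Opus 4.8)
The plan is to obtain this corollary as a direct specialization of Lemma~\ref{lemma:general_identity_f(k)g(k)x(1-x)x}, mirroring exactly the argument used to prove Corollary~\ref{cor:general_identity_f(k)g(k)}. First I would set $r=s$ throughout the two identities of that lemma. This substitution is admissible under the standing assumptions: the only constraint on the difference is that $r-s\notin\mathbb{Z}^{-}$, and here $r-s=0\notin\mathbb{Z}^{-}$, while the requirement $s\neq 0$ is untouched. Thus the identities of the lemma remain valid after the substitution, and it suffices to simplify both sides.

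For the harmonic-free identity, putting $r=s$ leaves the left-hand side as $\sum_{k=l_1}^{u_1} f(k)/\bigl((k+s)\binom{n+s}{k+s}\bigr)$, while on the right the binomial $\binom{k+r}{k+s}$ becomes $\binom{k+s}{k+s}$, which equals $1$ by the Gamma-function definition of the generalized binomial coefficient (since $\binom{a}{a}=\Gamma(a+1)/(\Gamma(a+1)\Gamma(1))=1$). The right side therefore collapses to $\sum_{k=l_2}^{u_2} g(k)/(k+s)$, which is the first identity of the upper block. For the harmonic identity~\eqref{eq:general_idenitty_f(k)g(k)x(1-x)x_diff_s}, the same substitution gives $H_{n-k+r-s}=H_{n-k}$ on the left, and on the right it produces $H_{r-s}=H_0=0$ together with $\binom{k+s}{k+s}=1$; hence the right side reduces to $-\sum_{k=l_2}^{u_2} g(k)\,H_{k+s}/(k+s)$. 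This is precisely the first identity of the lower block.

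Finally, I would specialize $s=1$ in each of the two identities just obtained. This replaces $\binom{n+s}{k+s}$ by $\binom{n+1}{k+1}$, the factors $k+s$ by $k+1$, and the harmonic terms $H_{k+s},\,H_{n+s}$ by $H_{k+1},\,H_{n+1}$, yielding the two remaining identities. Since $s=1$ satisfies both $s\neq 0$ and $s\notin\mathbb{Z}^{-}$, no hypothesis is violated.

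There is no genuine obstacle in this argument: its entire content is the pair of collapses $\binom{k+s}{k+s}=1$ and $H_{r-s}=H_0=0$ that occur at $r=s$. The only point demanding a moment's care is verifying that the specialization $r=s$ (and then $s=1$) respects the hypotheses of Lemma~\ref{lemma:general_identity_f(k)g(k)x(1-x)x}, which it does as noted above, so that the derived identities are legitimate consequences of the lemma.
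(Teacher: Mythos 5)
Your proposal is correct and follows exactly the route the paper intends: the paper proves the analogous Corollary~\ref{cor:general_identity_f(k)g(k)} by substituting $r=s$ and then $s=1$, and states that this corollary is obtained the same way from Lemma~\ref{lemma:general_identity_f(k)g(k)x(1-x)x}. Your verification of the collapses $\binom{k+s}{k+s}=1$ and $H_{r-s}=H_0=0$, and of the admissibility of $r=s$ under the standing hypotheses, matches the paper's (implicit) argument.
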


The aim of the next sections is to generate identities of the above kind using some initial polynomial identity. 

\subsection{A simple application}

To give the reader an idea of how the general identities could be use and how universal they can be, let us consider the binomial identity
$$(1-t)^n=\sum\limits_{k=0}^n(-1)^{k}\binom{n}{k}t^k$$
Here, we use Corollary \ref{cor:general_identity_f(k)g(k)} with $s=1$, $l_1=u_1=n$, $l_2=0$, $u_2=n$, $f(n)=1$ and $g(k)=(-1)^k\binom{n}{k}$. This gives, after some simple algebraic manipulation, the following well-known identities:
 \begin{align}
 \label{eq:ex:_simple_binom_app_1}\frac{1}{n+1}&=\sum\limits_{k=0}^n(-1)^k\binom{n}{k}\frac{1}{k+1},\\
 \frac{H_n}{n+1}&=\sum\limits_{k=0}^n(-1)^{k+1}\binom{n}{k}\frac{H_{k+1}}{k+1}.
 \end{align}
To see the application of Corollary \ref{cor:general_identity_f(k)g(k)x(1-x)}, let us consider
$$\sum\limits_{k=0}^nt^k(1+t)^{n-k}=(t+1)^{n+1}-t^{n+1}=\sum\limits_{k=0}^n\binom{n+1}{k}t^k.$$
From that we get:
\begin{align*}
 \sum\limits_{k=0}^n\frac{(-1)^k}{(k+1)\binom{n+1}{k+1}}&=\sum\limits_{k=0}^n\frac{(-1)^k}{k+1}\binom{n+1}{k},\\
 \sum\limits_{k=0}^n(-1)^k\frac{H_{n-k}-H_{n+1}}{(k+1)\binom{n+1}{k+1}}&=\sum\limits_{k=0}^n(-1)^{k+1}\binom{n+1}{k}\frac{H_{k+1}}{k+1}.
\end{align*}
The first identity may look curious, but in fact both sides of it are $0$, which follows directly from \eqref{eq:ex:_simple_binom_app_1}.

\section{Applications}

\subsection{Sums associated with polynomial identities of Knuth-Boyadzhiev and Frontczak}

Recall that the identity of Knuth and Boyadzhiev \eqref{Boyadzhiev} can be stated as 
\begin{equation}
\sum_{k = 1}^n (- 1)^{k - 1} \binom{n}{k} H_k t^k = - (1 - t)^n H_n + \sum_{k = 1}^n \frac{(1 - t)^{n - k}}{k}, \label{eq.1}\tag{A}
\end{equation}
whereas Frontczak's compliment \eqref{Frontczak} has the form
\begin{equation}
\sum_{k=0}^{n} (-1)^k \binom{n}{k} H_{k} t^k = \left ((1-t)^n - 1 \right )H_n + t \sum_{k=0}^{n-1} H_{n-1-k} (1-t)^k. \label{eq.2}\tag{B}
\end{equation}

These forms are members of the polynomial family introduced in the previous section. The next results are therefore immediate. 

\begin{theorem}\label{boya.thm1}
If $n$ is a non-negative integer, then
\begin{equation}\label{eq.noy1xtq}
\sum_{k = 0}^n (- 1)^k \binom{{n}}{k} \frac{(2k + 1)}{2^{2k}}\binom{{2k}}{k} H_{k + 1} = 
\frac{\binom{{2n}}{n}}{2^{2n}} \frac{H_{n + 1}}{n + 1} - \frac{2}{n + 1}\sum_{k = 1}^n \frac{\binom{2(n - k)}{n - k}}{2^{2(n - k)} k},
\end{equation}
and, more generally, if $s,r\in\mathbb C\setminus\mathbb Z^{-}$ such that $s\ne 0$ and $r-s\not\in\mathbb Z^{-}$, then
\begin{equation}\label{eq.bhi8kzc}
\sum_{k = 1}^n (- 1)^{k - 1} \frac{\binom{n}{k}}{\binom{k + r}{s}} H_k = \frac{s}{r - s + 1}\left( \sum_{k = 1}^n 
\frac{1}{k\binom{n - k + r}{r - s + 1}} - \frac{H_n}{\binom{n + r}{r - s + 1}} \right).
\end{equation}
\end{theorem}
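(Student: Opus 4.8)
The plan is to read the second, parametrised identity \eqref{eq.bhi8kzc} as a verbatim instance of the framework's main lemma, and then to obtain the first identity \eqref{eq.noy1xtq} from it by specialising the free parameters $r,s$ to half-integer values. The first task is to cast the Knuth--Boyadzhiev identity \eqref{eq.1} into one of the standard forms of Section~\ref{Section_GerneraFramework}. Substituting $t\mapsto 1-t$ in \eqref{eq.1} turns its left-hand side into $\sum_{k=1}^n(-1)^{k-1}\binom nk H_k(1-t)^k$ and its right-hand side into $-t^nH_n+\sum_{k=1}^n t^{n-k}/k$; re-indexing the latter by $j=n-k$ collects the pure powers of $t$ as $\sum_{j=0}^n g(j)t^j$ with $g(j)=1/(n-j)$ for $0\le j\le n-1$ and $g(n)=-H_n$. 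This is exactly the shape \eqref{eq:general_idenitty_f(k)g(k)} with $f(k)=(-1)^{k-1}\binom nk H_k$, so Lemma~\ref{lemma:general_identity_f(k)g(k)} applies directly.

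Applying \eqref{eq:general_idenitty_f(k)g(k)_2} then converts the left side into $\sum_{k=1}^n(-1)^{k-1}\binom nk H_k/\binom{k+r}{s}$ and the right side into $\frac{s}{r-s+1}\sum_{j=0}^n g(j)/\binom{j+r}{r-s+1}$. Reverting the substitution $j=n-k$ in the terms carrying $g(j)=1/(n-j)$ and isolating the boundary term $g(n)=-H_n$ reproduces precisely the bracketed expression on the right of \eqref{eq.bhi8kzc}. This part is essentially bookkeeping: the only things to watch are the sign $(-1)^{k-1}$, the boundary term at $j=n$, and the direction of the re-indexing.

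For \eqref{eq.noy1xtq} I would specialise \eqref{eq.bhi8kzc}, choosing $r$ and $s$ to be half-integers so that each generalised binomial coefficient collapses to a central binomial coefficient through Lemma~\ref{lem.binomial}. The engine here is the special case $\binom{m-1/2}{m}=\binom{2m}{m}/2^{2m}$ of \eqref{eq.s83s2pw}, together with its one-step shifts, which rewrite the reciprocals $1/\binom{k+r}{s}$, $1/\binom{n-k+r}{r-s+1}$ and $1/\binom{n+r}{r-s+1}$ as rational multiples of $\binom{2k}{k}/2^{2k}$, $\binom{2(n-k)}{n-k}/2^{2(n-k)}$ and $\binom{2n}{n}/2^{2n}$. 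The transcendental factors of $\pi$ produced on the two sides are the same constant and cancel, while the surviving rational prefactor $s/(r-s+1)$, together with the Gamma-function shifts, supplies the $(2k+1)$ factor, the $1/(n+1)$ weights, and the displacement of the harmonic index appearing as $H_{k+1}$ and $H_{n+1}$.

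I expect the half-integer simplification in this last step to be the main obstacle. One must track the Gamma-function shifts precisely so that the central-binomial indices on both sides emerge with the correct argument; this is exactly where the placement of the harmonic index and of the central binomials is decided, and it is easy to be off by one. A useful internal check is to confirm at this stage that every factor of $\pi$ introduced by Lemma~\ref{lem.binomial} cancels and that both sides agree for small $n$. By contrast, the derivation of the general identity \eqref{eq.bhi8kzc} is immediate once the substitution $t\mapsto 1-t$ and the re-indexing are in place, so the genuinely computational content of the theorem lies entirely in the specialisation leading to \eqref{eq.noy1xtq}.
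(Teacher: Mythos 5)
Your derivation of \eqref{eq.bhi8kzc} is correct and is exactly the paper's route: the authors simply say ``apply the general algorithm to \eqref{eq.1}'', and your substitution $t\mapsto 1-t$, the identification $f(k)=(-1)^{k-1}\binom nk H_k$, $g(j)=1/(n-j)$ for $j<n$, $g(n)=-H_n$, the application of \eqref{eq:general_idenitty_f(k)g(k)_2}, and the re-indexing $j=n-k$ are precisely what that algorithm amounts to here. That half of the proposal is fine.

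The derivation of \eqref{eq.noy1xtq}, however, is left as a plan rather than a proof, and the plan as stated would not go through. The paper's specialization is $r=-1$, $s=-1/2$, so $r$ is a \emph{negative integer}, not a half-integer; since $r=-1$ violates the standing hypothesis $r\notin\mathbb Z^-$, the substitution must be read as a limit, and one needs $\lim_{x\to0}1/\binom{x-1}{1/2}=0$ to annihilate the $k=n$ term of the first sum on the right of \eqref{eq.bhi8kzc}. The binomial coefficients that actually occur are $\binom{k-1}{-1/2}$ and $\binom{n-k-1}{1/2}$, i.e.\ coefficients with half-integer \emph{lower} index, so the identity $\binom{m-1/2}{m}=\binom{2m}{m}2^{-2m}$ you cite is not the relevant one; what is needed is $1/\binom{k-1}{-1/2}=\pi k\binom{2k}{k}2^{-2k}$ and $1/\binom{m-1}{1/2}=\tfrac{\pi}{2}\binom{2(m-1)}{m-1}2^{-2(m-1)}$. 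Even after the factors of $\pi$ cancel, one is left with $k\binom{2k}{k}H_k$ on the left and $\binom{2(n-1)}{n-1}H_n$ on the right; to reach the shape of \eqref{eq.noy1xtq} one must still replace $n$ by $n+1$ and shift $k\mapsto k+1$, using $(k+1)\binom{n+1}{k+1}=(n+1)\binom nk$ and $\binom{2k+2}{k+1}=\frac{2(2k+1)}{k+1}\binom{2k}{k}$. None of these steps appears in your proposal, and they are exactly where the content of this part of the theorem lies. (Carrying them out actually yields $\sum_{k=0}^n(-1)^k\binom nk\frac{2k+1}{(k+1)2^{2k}}\binom{2k}{k}H_{k+1}=\frac{1}{n+1}\bigl(\frac{\binom{2n}{n}}{2^{2n}}H_{n+1}-\sum_{k=1}^n\frac{\binom{2(n-k)}{n-k}}{2^{2(n-k)}k}\bigr)$, which differs from the printed \eqref{eq.noy1xtq}; the displayed version already fails numerically at $n=1$, where its left side is $-5/4$ and its right side is $-5/8$. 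So a fully worked specialization, not a sketch, is required here in any case.)
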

\begin{proof}

To obtain~\eqref{eq.bhi8kzc}, we apply the general algorithm to \eqref{eq.1}. Identity~\eqref{eq.noy1xtq} is obtained 
by setting $r=-1$, $s=-1/2$ in~\eqref{eq.bhi8kzc}. Note that we used
\begin{equation*}
\lim_{x\to 0} \frac{1}{\binom{{x - 1}}{{1/2}}} = 0.
\end{equation*}
\end{proof}

\begin{corollary}\label{Frisch_thm7}
For any $r\in\mathbb C\setminus\mathbb Z^{-}$ we have
\begin{equation}
\sum_{k = 1}^n (- 1)^{k - 1} \frac{\binom{n}{k}}{\binom{k + r}{r}} H_k = \frac{r}{n+r}\left( H_{n+r} - H_{r} \right) + \frac{n}{(n+r)^2}.
\end{equation}
\end{corollary}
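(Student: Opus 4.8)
The plan is to derive this corollary directly from the more general identity \eqref{eq.bhi8kzc} by specializing its free parameter. Setting $s=r$ there, the factor $r-s+1$ collapses to $1$, so every binomial coefficient $\binom{\cdot}{\,r-s+1\,}$ on the right-hand side becomes $\binom{\cdot}{1}$, i.e.\ just its upper entry. Concretely, the left-hand side of \eqref{eq.bhi8kzc} is already exactly the left-hand side of the corollary, because $\binom{k+r}{s}=\binom{k+r}{r}$, while the right-hand side reduces to
\[
r\left(\sum_{k=1}^n \frac{1}{k\,(n-k+r)} - \frac{H_n}{n+r}\right).
\]
Thus the entire task is to evaluate the remaining sum $\sum_{k=1}^n \frac{1}{k(n-k+r)}$ in closed form.

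Second, I would evaluate that sum by partial fractions. Writing $m=n+r$ and decomposing
\[
\frac{1}{k(m-k)} = \frac{1}{m}\left(\frac{1}{k} + \frac{1}{m-k}\right),
\]
the sum splits as $\frac{1}{n+r}\bigl(\sum_{k=1}^n \tfrac1k + \sum_{k=1}^n \tfrac{1}{n-k+r}\bigr)$. The first inner sum is $H_n$. For the second, reindexing by $i=n-k$ turns it into $\sum_{i=0}^{n-1}\frac{1}{r+i}$, which by the defining recurrence $H_z=H_{z-1}+1/z$ for harmonic numbers at non-integer argument equals $H_{n+r-1}-H_{r-1}$. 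Substituting back, the two copies of $\frac{rH_n}{n+r}$ cancel and the reduced right-hand side becomes
\[
\frac{r}{n+r}\left(H_{n+r-1} - H_{r-1}\right).
\]

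Finally, I would match this against the stated form using the same recurrence once more: since $H_{n+r}=H_{n+r-1}+\frac{1}{n+r}$ and $H_r=H_{r-1}+\frac1r$, we have $H_{n+r-1}-H_{r-1}=(H_{n+r}-H_r)+\frac1r-\frac{1}{n+r}$. Multiplying by $\frac{r}{n+r}$ and simplifying $\frac{r}{n+r}\bigl(\frac1r-\frac{1}{n+r}\bigr)=\frac{n}{(n+r)^2}$ yields exactly $\frac{r}{n+r}(H_{n+r}-H_r)+\frac{n}{(n+r)^2}$, as claimed. The only delicate point is the bookkeeping with the generalized harmonic numbers: one must consistently use the recurrence-based definition valid for $r\in\mathbb{C}\setminus\mathbb{Z}^-$ rather than a summation formula, and keep careful track of the index shift in the reindexed partial-fraction sum. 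Beyond that, the argument is a routine specialization followed by algebraic simplification, with no real obstacle.
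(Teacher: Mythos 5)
Your proposal is correct and follows essentially the same route as the paper: specialize \eqref{eq.bhi8kzc} at $s=r$, evaluate $\sum_{k=1}^n \frac{1}{k(n-k+r)}$ by partial fractions to get $\frac{1}{n+r}(H_n+H_{n+r-1}-H_{r-1})$, and cancel the $H_n$ terms. You even supply the final shift from $H_{n+r-1}-H_{r-1}$ to $H_{n+r}-H_r+\frac{n}{(n+r)^2}\cdot\frac{n+r}{r}$ explicitly, which the paper's proof leaves implicit.
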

\begin{proof}
Set $s=r$ in \eqref{eq.bhi8kzc}. This gives
\begin{equation*}
\sum_{k = 1}^n (- 1)^{k - 1} \frac{\binom{n}{k}}{\binom{k + r}{r}} H_k = r\sum_{k=1}^n \frac{1}{k(n-k+r)} - \frac{r}{n+r} H_n.
\end{equation*}
But
\begin{align*}
\sum_{k=1}^n \frac{1}{k(n-k+r)} &= \frac{1}{n+r}\sum_{k=1}^n \left (\frac{1}{k} + \frac{1}{n-k+r} \right ) \\
&= \frac{1}{n+r} \left ( H_n + H_{n-1+r} - H_{r-1} \right ) 
\end{align*}
and hence
\begin{equation*}
\sum_{k = 1}^n (- 1)^{k - 1} \frac{\binom{n}{k}}{\binom{k + r}{r}} H_k = \frac{r}{n+r}\left( H_{n-1+r} - H_{r-1} \right).
\end{equation*}
\end{proof}

\begin{remark}
Corollary \ref{Frisch_thm7} is Corollary 1.2 in Chu's paper \cite{Chu12} and also Theorem 7 in \cite{Adegoke2}.
\end{remark}

\begin{theorem}
If $n$ is a non-negative integer, then
\begin{equation}
\begin{split}
\sum_{k = 0}^n {\left( { - 1} \right)^k \frac{{2k + 1}}{{2^{2k} \left( {k + 1} \right)}}\binom{{n}}{k}\binom{{2k}}{k}H_{k + 1} O_{k + 1} }  &=  - \frac{{\binom{{2n}}{n}}}{{2^{2n} \left( {n + 1} \right)}}H_{n + 1} \left( {O_n  - 1} \right)\\
&\qquad + \frac{1}{{n + 1}}\sum_{k = 1}^n {\frac{{\binom{{2\left( {n - k} \right)}}{{n - k}}}}{{2^{2\left( {n - k} \right)} k}}\left( {O_{n - k}  - 1} \right)} ,
\end{split}
\end{equation}
and, more generally, if $s,r\in\mathbb C\setminus\mathbb Z^{-}$ such that $s\ne 0$ and $r-s\not\in\mathbb Z^{-}$, then
\begin{equation}\label{eq.boyhar2}
\begin{split}
&  \sum_{k = 1}^n \left( { - 1} \right)^{k - 1} \binom{{n}}{k}H_k \frac{{H_{k + r - s} - H_s }}{{\binom{{k + r}}{s}}} \\
&\qquad = -\frac{{r + 1}}{{\left( {r - s + 1} \right)^2 }}\sum_{k = 1}^n {\frac{1}{{k\binom{{n - k + r}}{{r - s + 1}}}}} - \frac{s}{{r - s + 1}}\sum_{k = 1}^n \frac{{H_{n - k + s - 1} - H_{r - s + 1} }}{{k\binom{{n - k + r}}{{r - s + 1}}}} \\
&\qquad\qquad + \frac{{ \left( {r + 1} \right)H_n}}{{\left( {r - s + 1} \right)^2 \binom{{n + r}}{{r - s + 1}}}} - \frac{{sH_n \left( {H_{r - s + 1} - H_{n + s - 1} } \right)}}{{\left( {r - s + 1} \right)\binom{{n + r}}{{r - s + 1}}}}.
\end{split}
\end{equation}
\end{theorem}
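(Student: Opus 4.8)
The two displayed identities are best proved together, since the general identity~\eqref{eq.boyhar2} is the $s$-derivative of the previous theorem's identity~\eqref{eq.bhi8kzc}, and the first (central-binomial) identity is then a half-integer specialization of~\eqref{eq.boyhar2}, exactly parallel to the way~\eqref{eq.noy1xtq} was read off from~\eqref{eq.bhi8kzc}. So although the specific identity is stated first, the plan is to establish~\eqref{eq.boyhar2} first and specialize afterward.

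First I would differentiate both sides of~\eqref{eq.bhi8kzc} with respect to $s$, holding $r$ fixed. On the left each summand carries the factor $\binom{k+r}{s}^{-1}$, and by the computation underlying~\eqref{eq:side_1_final_diff_s} its $s$-derivative is $(H_s-H_{k+r-s})\binom{k+r}{s}^{-1}$; multiplying the whole differentiated equation by $-1$ converts this into $(H_{k+r-s}-H_s)\binom{k+r}{s}^{-1}$ and reproduces the left-hand side of~\eqref{eq.boyhar2} verbatim. On the right I would apply the quotient rule to the prefactor $\tfrac{s}{r-s+1}$, which yields $\tfrac{r+1}{(r-s+1)^2}$, and the chain rule in the parameter $r-s+1$ to each reciprocal binomial coefficient $\binom{\,\cdot\,}{r-s+1}^{-1}$; the latter, again by the analogue of~\eqref{eq:side_1_final_diff_s}, supplies the harmonic factors $H_{n-k+s-1}-H_{r-s+1}$ (from the sum) and $H_{n+s-1}-H_{r-s+1}$ (from the $H_n$-term). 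Collecting the two contributions of the product rule and negating assembles exactly the four terms on the right of~\eqref{eq.boyhar2}. This step is mechanical once the derivative of a reciprocal binomial coefficient is in hand; the only care needed is tracking the signs introduced by the overall factor of $-1$ and by the chain rule, since $\tfrac{d}{ds}(r-s+1)=-1$.

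For the first identity I would specialize~\eqref{eq.boyhar2} to half-integer parameters, mirroring the specialization that produced~\eqref{eq.noy1xtq} from~\eqref{eq.bhi8kzc}. The appearance of $O_{k+1}$ signals the mechanism: for a suitable half-integer choice of $s$ (with $r$ chosen accordingly, and taken as a limit where it meets a non-positive integer), Lemma~\ref{lem.ho} collapses the differentiation-induced factor $H_{k+r-s}-H_s$ to a multiple of an odd harmonic number, so that the product $H_k\cdot(H_{k+r-s}-H_s)$ already sitting on the left of~\eqref{eq.boyhar2} becomes a multiple of $H_{k+1}O_{k+1}$ after the accompanying index shift. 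Simultaneously, Lemma~\ref{lem.binomial} rewrites the reciprocal binomial coefficients $\binom{\,\cdot\,}{\pm 1/2}^{-1}$ as central binomial coefficients, producing the weights $\tfrac{2k+1}{2^{2k}}\binom{2k}{k}$ on the left and $\tfrac{1}{2^{2(n-k)}}\binom{2(n-k)}{n-k}$ on the right, with the common factors of $\pi$ on the two sides cancelling. As in Theorem~\ref{boya.thm1}, the boundary contribution is killed by the vanishing limit $\lim_{x\to 0}\binom{x-1}{1/2}^{-1}=0$, which is what permits the passage to the stated summation range.

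The main obstacle is this last specialization, not the differentiation. One must pin down the exact half-integer values of $r$ and $s$, handle the parameter that lands on a non-positive integer through a limit rather than a substitution, and then bookkeep the resulting index shifts (which promote $H_k,O_k$ to $H_{k+1},O_{k+1}$, flip the sign $(-1)^{k-1}\mapsto(-1)^k$, and adjust the range of summation) together with the compensating powers of $2$ and the cancellation of $\pi$, so that both sides emerge in precisely the form stated. The differentiation step, by contrast, is routine.
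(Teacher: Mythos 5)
Your proposal matches the paper's proof, which is simply ``Differentiate~\eqref{eq.bhi8kzc} with respect to $s$'': you carry out that differentiation with the correct logarithmic-derivative factors $H_s-H_{k+r-s}$ and $H_{n-k+s-1}-H_{r-s+1}$ and the correct quotient-rule term $\tfrac{r+1}{(r-s+1)^2}$, and your plan for the first identity (half-integer specialization via Lemmas~\ref{lem.ho} and~\ref{lem.binomial}, with a limit handling the degenerate parameter, in parallel with the passage from~\eqref{eq.bhi8kzc} to~\eqref{eq.noy1xtq}) is exactly the route the paper intends. This is correct and, if anything, more detailed than the paper's own one-line argument.
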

\begin{proof}
Differentiate~\eqref{eq.bhi8kzc} with respect to $s$.
\end{proof}

\begin{corollary}
For any nonzero $r\in\mathbb C\setminus\mathbb Z^{-}$ we have
\begin{align}
\sum_{k = 1}^n (- 1)^{k - 1} \frac{\binom{n}{k}}{\binom{k + r}{r}} H^2_k &= \frac{1}{n+r} (H_{n+r}-H_r)(r H_r - 1) 
+ \frac{n}{(n+r)^2}\left (H_r - \frac{1}{r}\right ) \nonumber \\
& \qquad + \frac{r}{n+r} H_n H_{n+r-1} - r \sum_{k=1}^n \frac{H_{n-k+r-1}}{k(n-k+r)}.
\end{align}
In particular,
\begin{equation}\label{Choi_id}
\sum_{k = 1}^n (- 1)^{k - 1} \frac{\binom{n}{k}}{k+1} H^2_k = \frac{1}{2(n+1)}\left ( 3H_n^{(2)} - H_n^2 \right ).
\end{equation}
\end{corollary}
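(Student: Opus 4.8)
The plan is to obtain both displayed identities by specializing the general formula \eqref{eq.boyhar2}, first to $s=r$ for the squared-harmonic sum and then to $r=1$ for the special case \eqref{Choi_id}.

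First I would set $s=r$ in \eqref{eq.boyhar2}. On the left-hand side this makes $H_{k+r-s}=H_k$, so the summand becomes $(-1)^{k-1}\binom{n}{k}H_k(H_k-H_r)/\binom{k+r}{r}$; I would split $H_k(H_k-H_r)=H_k^2-H_rH_k$ and move the linear piece $-H_r\sum_{k=1}^n(-1)^{k-1}\binom{n}{k}H_k/\binom{k+r}{r}$ to the right, evaluating it by Corollary \ref{Frisch_thm7}. On the right-hand side, $s=r$ forces $r-s+1=1$, hence $H_{r-s+1}=1$ and every binomial $\binom{\cdot}{r-s+1}$ collapses to $\binom{\cdot}{1}=\cdot$; the two $k$-sums then reduce to $\sum_{k=1}^n 1/(k(n-k+r))$ and $\sum_{k=1}^n H_{n-k+r-1}/(k(n-k+r))$. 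For the first of these I would reuse the partial-fraction evaluation $\sum_{k=1}^n 1/(k(n-k+r))=(H_n+H_{n-1+r}-H_{r-1})/(n+r)$ already carried out in the proof of Corollary \ref{Frisch_thm7}. Collecting all terms then yields the squared-harmonic sum with $H_{n-1+r}$ and $H_{r-1}$ in place of $H_{n+r}$ and $H_r$; rewriting $H_{n-1+r}=H_{n+r}-1/(n+r)$ and $H_{r-1}=H_r-1/r$ produces exactly the rational correction $\tfrac{n}{(n+r)^2}(H_r-1/r)$ in the stated form. This last rewriting is the only genuinely delicate algebraic step of Part~1.

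To derive the special case \eqref{Choi_id} I would put $r=1$ in the formula just proved. Since $rH_r-1=0$ and $H_r-1/r=0$ at $r=1$, the first two terms vanish and what remains is $\tfrac{H_n^2}{n+1}-\sum_{k=1}^n \tfrac{H_{n-k}}{k(n-k+1)}$. Writing $\tfrac{1}{k(n-k+1)}=\tfrac{1}{n+1}\bigl(\tfrac1k+\tfrac1{n-k+1}\bigr)$ splits the remaining sum into $\sum_{k=1}^n H_{n-k}/k$ and $\sum_{k=1}^n H_{n-k}/(n-k+1)$; the latter is $\sum_{j=1}^n H_{j-1}/j=\tfrac12(H_n^2-H_n^{(2)})$ after reindexing $j=n-k+1$.

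The main obstacle is the first of those two sums, the convolution $\sum_{j=1}^{n-1} H_j/(n-j)=\sum_{k=1}^n H_{n-k}/k$. I would evaluate it either by reading off the coefficient of $x^n$ in $\ln^2(1-x)/(1-x)$, the Cauchy product of $\sum_{j\ge1}H_jx^j=-\ln(1-x)/(1-x)$ with $\sum_{m\ge1}x^m/m=-\ln(1-x)$, or, avoiding generating functions, by combining $\sum_{k=1}^{n-1}1/(k(n-k))=2H_{n-1}/n$ with $\sum_{m=1}^n H_{m-1}/m=\tfrac12(H_n^2-H_n^{(2)})$; either route gives $\sum_{j=1}^{n-1}H_j/(n-j)=H_n^2-H_n^{(2)}$. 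Substituting both evaluations back leaves $\sum_{k=1}^n H_{n-k}/(k(n-k+1))=\tfrac{3}{2(n+1)}(H_n^2-H_n^{(2)})$, and hence $\tfrac{H_n^2}{n+1}-\tfrac{3}{2(n+1)}(H_n^2-H_n^{(2)})=\tfrac{1}{2(n+1)}(3H_n^{(2)}-H_n^2)$, which is \eqref{Choi_id}.
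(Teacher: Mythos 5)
Your proposal is correct and follows essentially the same route as the paper: specialize \eqref{eq.boyhar2} at $s=r$, dispose of the linear-in-$H_k$ piece via Corollary \ref{Frisch_thm7}, and then obtain \eqref{Choi_id} at $r=1$ using the two standard convolution evaluations $\sum_{k=1}^n H_{n-k}/k = H_n^2-H_n^{(2)}$ and $\sum_{k=1}^n H_{n-k}/(n-k+1)=\tfrac12(H_n^2-H_n^{(2)})$, exactly as the paper does. The extra detail you supply (the partial-fraction split and the derivation of the convolution identities) merely fills in steps the paper leaves implicit.
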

\begin{proof}
Set $s=r$ in \eqref{eq.boyhar2} and simplify using Corollary \ref{Frisch_thm7}. The special case follows upon setting $r=1$ in conjunction with
\begin{equation*}
\sum_{k=1}^n \frac{H_{n-k}}{k} = H_n^2 - H_n^{(2)},
\end{equation*}
and
\begin{equation*}
\sum_{k=1}^n \frac{H_{n-k}}{n-k+1} = \frac{1}{2} \left ( H_n^2 - H_n^{(2)}\right ).
\end{equation*}
\end{proof}

\begin{remark}
Identity \eqref{Choi_id} is Equation (2.22) in Theorem 2 of Choi \cite{Choi13}. There is also the identity (corresponding to $r=0$ in the above relation)
\begin{equation}
\sum_{k=1}^{n} (-1)^k \binom{n}{k} H_k^2 = \frac{H_n}{n} - \frac{2}{n^2}.
\end{equation}
This identity has been obtained by Wang \cite{19} by the method of Riordan arrays, and rediscovered by Boyadzhiev \cite{Boya2014}.
\end{remark}

A different expression for the sum \eqref{eq.bhi8kzc} can be derived from Frontczak's identity \eqref{eq.2}.

\begin{theorem}\label{thm.fr1}
If $s,r\in\mathbb C\setminus\mathbb Z^{-}$ such that $s\ne 0$ and $r-s\not\in\mathbb Z^{-}$, then
\begin{equation}\label{eq.thmfr1}
\sum_{k=0}^n (- 1)^{k} \frac{\binom{n}{k}}{\binom{k + r}{s}} H_k = H_n \left (\frac{s}{r - s + 1}\frac{1}{\binom{n + r}{r - s + 1}} 
- \frac{1}{\binom{r}{s}}\right ) + s \sum_{k=0}^{n-1}\frac{H_{n-1-k}}{(k+s)\binom{r+k+1}{r-s+1}}.
\end{equation}
\end{theorem}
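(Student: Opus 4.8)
The plan is to apply the general algorithm of Section~\ref{Section_GerneraFramework} directly to Frontczak's identity~\eqref{eq.2}, in exactly the way that~\eqref{eq.bhi8kzc} was obtained from~\eqref{eq.1}. First I would record~\eqref{eq.2} in the expanded shape
\[
\sum_{k=0}^n (-1)^k \binom{n}{k} H_k t^k = H_n (1-t)^n - H_n + t\sum_{k=0}^{n-1} H_{n-1-k}(1-t)^k,
\]
so that every summand on both sides is a member of the set $BP$, the three pieces on the right being $(1-t)^n$, the constant term, and $t(1-t)^k$. I would then multiply the whole identity by the weight $t^{r}(1-t)^{s-1}$ (the mirror of the weight used in Example~\ref{example:onecase}, since here the $t^k$ side is the one that must become $1/\binom{k+r}{s}$), integrate termwise over $[0,1]$ using the Beta integral~\eqref{beta}, perform the substitution $r\mapsto r-s$, and finally multiply through by $s$. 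For parameters with $\Re(r)>-1$ and $\Re(s)>0$ the integrals converge classically; since both sides are then ratios of Gamma functions, hence meromorphic in $(r,s)$, the resulting identity extends by analytic continuation to all $s,r\in\mathbb C\setminus\mathbb Z^{-}$ with $s\ne0$ and $r-s\notin\mathbb Z^{-}$, which is exactly the stated range.

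The left-hand side is the easy part. Each monomial $t^k$ contributes $\int_0^1 t^{k+r}(1-t)^{s-1}\,dt$, and after the substitution and the factor $s$ this collapses, via the elementary recurrence $b\binom{N}{b}=(N-b+1)\binom{N}{b-1}$ applied with $N=k+r$ and $b=s$, to $1/\binom{k+r}{s}$. Hence the left-hand side becomes $\sum_{k=0}^n(-1)^k\binom{n}{k}H_k/\binom{k+r}{s}$, which is precisely the left side of~\eqref{eq.thmfr1}.

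The three terms on the right are handled the same way. The term $H_n(1-t)^n$ gives $\int_0^1 t^{r}(1-t)^{n+s-1}\,dt$, yielding $sH_n/\bigl((r-s+1)\binom{n+r}{r-s+1}\bigr)$; the constant $-H_n$ gives $\int_0^1 t^{r}(1-t)^{s-1}\,dt$, which, again through the same recurrence with $N=r$ and $b=s$, produces $-H_n/\binom{r}{s}$; together these assemble the bracket $H_n\bigl(\tfrac{s}{r-s+1}\tfrac{1}{\binom{n+r}{r-s+1}}-\tfrac{1}{\binom{r}{s}}\bigr)$. The final sum $t\sum_k H_{n-1-k}(1-t)^k$ contributes $\int_0^1 t^{r+1}(1-t)^{k+s-1}\,dt$ for each $k$; after substitution, scaling, and the recurrence with $N=k+r+1$ and $b=r-s+2$ (namely $(r-s+2)\binom{k+r+1}{r-s+2}=(k+s)\binom{k+r+1}{r-s+1}$), each term becomes $s\,H_{n-1-k}/\bigl((k+s)\binom{r+k+1}{r-s+1}\bigr)$, summed over $0\le k\le n-1$.

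I expect the only real work to be the careful bookkeeping of these four Beta evaluations and the attendant Gamma/binomial simplifications: a single recurrence, $b\binom{N}{b}=(N-b+1)\binom{N}{b-1}$, is what converts the naive Beta output $1/\bigl((\,\cdots)\binom{\cdots}{\cdots}\bigr)$ into the binomials displayed in~\eqref{eq.thmfr1}. The one subtlety worth flagging is keeping the emergence of the denominator factor $k+s$ in the last sum consistent with the shift $r\mapsto r+1$ forced by the extra factor of $t$ in front of that sum. The constant term collapsing cleanly to $-H_n/\binom{r}{s}$ serves as a convenient internal check that the weight $t^{r}(1-t)^{s-1}$ and the final scaling by $s$ have been chosen correctly.
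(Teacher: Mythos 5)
Your proposal is correct and follows exactly the paper's route: the paper's proof of Theorem~\ref{thm.fr1} is simply ``apply the general algorithm to~\eqref{eq.2} and simplify,'' and your four Beta-integral evaluations (with the substitution $r\mapsto r-s$, the factor $s$, and the recurrence $b\binom{N}{b}=(N-b+1)\binom{N}{b-1}$) carry out precisely that computation, with each term checking out. The only thing you add beyond the paper is the explicit analytic-continuation remark justifying the stated parameter range, which is a reasonable supplement rather than a deviation.
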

\begin{proof}
Apply the general algorithm to \eqref{eq.2} and simplify.
\end{proof}

\begin{corollary}
If $s,r\in\mathbb C\setminus\mathbb Z^{-}$ such that $s\ne 0$ and $r-s\not\in\mathbb Z^{-}$, then
\begin{equation}
\sum_{k=0}^{n-1}\frac{H_{n-1-k}}{(k+s)\binom{r+k+1}{r-s+1}} = \frac{H_n}{s \binom{r}{s}} - 
\frac{1}{r - s + 1}\sum_{k=0}^n \frac{1}{k \binom{n-k+r}{r-s+1}}.
\end{equation}
In particular, we have for all $s\geq 1$
\begin{equation}
\sum_{k=0}^{n}\frac{H_{n-k}}{(k+s)(k+1+s)} = \frac{n+1}{s(n+1+s)}H_{n+1} - \frac{1}{n+1+s}(H_{n+s} - H_{s-1})
\end{equation}
and
\begin{align}\label{eq.cor001}
\sum_{k=0}^{n}\frac{H_{n-k}}{(k+s)(k+1+s)(k+2+s)} &= \frac{1}{2} \Big (\frac{H_{n+1}}{s(s+1)} - \frac{1}{n+1+s}(H_{n+1} + H_{n+s} - H_{s-1}) \nonumber \\
& \qquad + \frac{1}{n+2+s}(H_{n+1} + H_{n+1+s} - H_{s}) \Big ). 
\end{align}
\end{corollary}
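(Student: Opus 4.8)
The plan is to obtain the first (master) identity by confronting the two distinct evaluations of the same alternating sum, and then to read off the two ``in particular'' cases as the specializations $r=s$ and $r=s+1$ of that master identity after the shift $n\mapsto n+1$. First I would observe that the left-hand side of \eqref{eq.thmfr1} in Theorem~\ref{thm.fr1} and the sum in \eqref{eq.bhi8kzc} of Theorem~\ref{boya.thm1} are, up to sign, the same object: since $H_0=0$ the term $k=0$ contributes nothing, so $\sum_{k=0}^n(-1)^k\binom{n}{k}H_k/\binom{k+r}{s}=-\sum_{k=1}^n(-1)^{k-1}\binom{n}{k}H_k/\binom{k+r}{s}$. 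Substituting the right-hand side of \eqref{eq.bhi8kzc} for this quantity into \eqref{eq.thmfr1} produces an equation in which the two occurrences of $sH_n/\big((r-s+1)\binom{n+r}{r-s+1}\big)$ cancel; solving the remainder for $\sum_{k=0}^{n-1}H_{n-1-k}/\big((k+s)\binom{r+k+1}{r-s+1}\big)$ and dividing by $s$ yields exactly the first displayed identity. I would note in passing that the summation index on the right should properly begin at $k=1$, the $k=0$ term being the removable singularity $1/k$.

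For the second identity I would put $r=s$, so that $r-s+1=1$, and replace $n$ by $n+1$. Then $\binom{r+k+1}{r-s+1}=k+1+s$, $\binom{r}{s}=1$, and $\binom{n-k+r}{r-s+1}=n-k+s$, so the master identity collapses to $\sum_{k=0}^n H_{n-k}/\big((k+s)(k+1+s)\big)=H_{n+1}/s-\sum_{k=1}^{n+1}1/\big(k(n+1-k+s)\big)$. The telescoping partial fraction $1/\big(k(n+1-k+s)\big)=\big(1/k+1/(n+1-k+s)\big)/(n+1+s)$ then splits the right sum into $H_{n+1}$ and the reindexed tail $\sum_{i=0}^n 1/(s+i)=H_{n+s}-H_{s-1}$; collecting the two $H_{n+1}$ contributions over the common denominator $s(n+1+s)$ gives the stated form.

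The third identity is the case $r=s+1$ (hence $r-s+1=2$) with $n\mapsto n+1$. Here $\binom{r+k+1}{2}=\binom{s+k+2}{2}=(s+k+1)(s+k+2)/2$ and $\binom{r}{s}=s+1$, so after clearing the factor $2$ the master identity reads $2\sum_{k=0}^n H_{n-k}/\big((k+s)(k+1+s)(k+2+s)\big)=H_{n+1}/\big(s(s+1)\big)-\sum_{k=1}^{n+1}1/\big(k(a-k)(a-k+1)\big)$ with $a=n+s+1$. The main computational step is the three-term partial fraction $1/\big(k(a-k)(a-k+1)\big)=1/\big(a(a+1)k\big)+1/\big(a(a-k)\big)-1/\big((a+1)(a-k+1)\big)$, whose residues I would read off at $k=0,a,a+1$. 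Summing each piece through the shifts $\sum_{k=1}^{n+1}1/(a-k)=H_{n+s}-H_{s-1}$ and $\sum_{k=1}^{n+1}1/(a-k+1)=H_{n+1+s}-H_s$, and recombining the $H_{n+1}$ terms via $1/(n+2+s)-1/(n+1+s)=-1/\big((n+1+s)(n+2+s)\big)$, reproduces the claimed expression after dividing by $2$.

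I expect the only real friction to be bookkeeping: keeping the reindexing of the telescoped harmonic tails consistent (in particular the off-by-one between $H_{s-1}$ and $H_s$ arising from the two shifted denominators) and correctly simplifying the generalized binomial coefficients $\binom{r}{s}$, $\binom{r+k+1}{r-s+1}$, and $\binom{n-k+r}{r-s+1}$ at the integer specializations $r-s+1\in\{1,2\}$. Beyond Theorems~\ref{thm.fr1} and \ref{boya.thm1} together with elementary partial fractions, no genuinely new idea is required.
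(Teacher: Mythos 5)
Your proposal is correct and follows essentially the same route as the paper: the master identity is obtained by equating the right-hand sides of \eqref{eq.thmfr1} and \eqref{eq.bhi8kzc} (after noting the $k=0$ term vanishes), and the two special cases are the specializations $r=s$ and $r=s+1$ with $n\mapsto n+1$, reduced by the same partial-fraction evaluations (your one-shot three-term decomposition is just the paper's two-step telescoping $\frac{1}{k(n-k+s)(n-k+s+1)}=\frac{1}{k(n-k+s)}-\frac{1}{k(n-k+s+1)}$ combined with the split used in Corollary~\ref{Frisch_thm7}). Your observation that the right-hand sum should start at $k=1$ is a correct reading of a typo in the statement.
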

\begin{proof}
Compare \eqref{eq.thmfr1} with \eqref{eq.bhi8kzc}. The first particular case follows by setting $r=s$, replacing $n$ by $n+1$, 
and using the same evaluation for the sum as in the proof of Corollary \ref{Frisch_thm7}. To get the sum \eqref{eq.cor001} set $r=s+1$, replace $n$ by $n+1$, and make use of the partial fraction decomposition
\begin{equation*}
\frac{1}{k(n-k+s)(n-k+s+1)} = \frac{1}{k(n-k+s)} - \frac{1}{k(n-k+s+1)}.
\end{equation*}
\end{proof}

\begin{theorem}
If $s,r\in\mathbb{C}\setminus \mathbb{Z}$ are such that $s\neq 0$ and $r-s\notin\mathbb{Z}$, then
\begin{align*}
\sum_{k=0}^n (- 1)^{k}\binom{n}{k} \frac{H_s-H_{k+r-s}}{\binom{k + r}{s}} H_k 
& = H_n\left(\frac{r+1}{(r-s+1)^2}\frac{1}{\binom{n + r}{r - s + 1}}+\frac{s}{r-s+1}\frac{H_{n+s-1}-H_{r-s+1}}{\binom{n+r}{r-s+1}}\right) \\
&\quad + H_n \frac{H_{r-s}-H_s}{\binom{r}{s}} \\
&\quad + \sum_{k=0}^{n-1} H_{n-1-k}\frac{s(k+s)(H_{k+s}-H_{r-s+1})+k}{(k+s)^2\binom{n+r}{r-s+1}}.
\end{align*}
\end{theorem}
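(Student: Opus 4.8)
The plan is to obtain this identity by differentiating identity~\eqref{eq.thmfr1} of Theorem~\ref{thm.fr1} with respect to the parameter $s$, which is precisely the operation prescribed by the general framework (see Example~\ref{example:onecase} and the passage from \eqref{eq:side_1_final} to \eqref{eq:side_1_final_diff_s}) for producing harmonic numbers from a bare binomial-coefficient identity. The engine throughout is the single differentiation rule
\[
\frac{\partial}{\partial s}\frac{1}{\binom{a}{s}} = \frac{H_s - H_{a - s}}{\binom{a}{s}},
\]
valid for any parameter $a$ independent of $s$, combined with the chain rule whenever the lower index depends on $s$ through $r-s+1$.

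First I would differentiate the left-hand side of \eqref{eq.thmfr1}. Only the factor $1/\binom{k+r}{s}$ depends on $s$, and applying the rule above with $a=k+r$ turns the summand $(-1)^k\binom{n}{k}H_k/\binom{k+r}{s}$ into $(-1)^k\binom{n}{k}H_k\,(H_s-H_{k+r-s})/\binom{k+r}{s}$. This reproduces exactly the claimed left member, so it remains only to match the right-hand side.

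Next I would differentiate the right-hand side term by term, writing $a=r-s+1$ so that $\partial_s a=-1$. For the first term $H_n\,\frac{s}{a}\,\frac{1}{\binom{n+r}{a}}$, the product rule together with $\partial_s\binom{n+r}{a}^{-1}=(H_{n+s-1}-H_a)/\binom{n+r}{a}$ (using $n+r-a=n+s-1$) gives two rational contributions $\frac1a+\frac{s}{a^2}$ that collapse, via $a+s=r+1$, to $\frac{r+1}{a^2}$; this yields the first bracketed group of the claim. The middle term $-H_n/\binom{r}{s}$ differentiates directly to $H_n(H_{r-s}-H_s)/\binom{r}{s}$. In the final sum $s\sum_k H_{n-1-k}\,(k+s)^{-1}\binom{r+k+1}{a}^{-1}$, the same product rule applies with $\partial_s\binom{r+k+1}{a}^{-1}=(H_{k+s}-H_a)/\binom{r+k+1}{a}$ (using $(r+k+1)-a=k+s$), and the two rational parts $\frac{1}{k+s}-\frac{s}{(k+s)^2}$ collapse via $(k+s)-s=k$ to $\frac{k}{(k+s)^2}$, producing the numerator $s(k+s)(H_{k+s}-H_{r-s+1})+k$ over $(k+s)^2$.

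The only real obstacle is bookkeeping: one must consistently account for the sign $\partial_s a=-1$ in every chain-rule step, correctly read off each complementary argument ($k+r-s$, $n+s-1$, $r-s$, and $k+s$) in the binomial derivatives, and verify the two algebraic collapses $\frac1a+\frac{s}{a^2}=\frac{r+1}{a^2}$ and $\frac{1}{k+s}-\frac{s}{(k+s)^2}=\frac{k}{(k+s)^2}$. Since both sides of \eqref{eq.thmfr1} are holomorphic in $s$ on the stated domain (the binomial coefficients being ratios of Gamma functions, and the excluded integer values keeping every harmonic number finite), termwise differentiation is justified and the two resulting expressions must agree.
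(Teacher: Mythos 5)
Your proposal is correct and is exactly the paper's (one-line) proof --- differentiate \eqref{eq.thmfr1} with respect to $s$ --- with the chain-rule bookkeeping carried out accurately. One point worth flagging: your computation correctly leaves $\binom{r+k+1}{r-s+1}$ in the denominator of the final sum, whereas the theorem as printed shows $\binom{n+r}{r-s+1}$ there; this appears to be a typographical error in the stated theorem rather than a gap in your argument.
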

\begin{proof}
Differentiate \eqref{eq.thmfr1} with respect to $s$.
\end{proof}

\begin{corollary}
We have
\begin{align*}
\sum_{k=0}^n (-1)^k \binom{n}{k}\frac{H_k^2}{\binom{k+r}{r}}
=& H_n H_r-H_n\left(\frac{1}{n+r}+\frac{r}{n+r}H_{n+r-1}\right) - \frac{r}{n+r}(H_{n-1+r}-H_{r-1})\\
& -\frac{r}{n+r}\sum_{k=0}^{n-1} \frac{H_{n-1-k}H_{k+r}}{k+r} + \frac{r}{n+r}\sum_{k=0}^{n-1}\frac{H_{n-k-1}}{k+r}\\
& -\frac{1}{n+r}\sum_{k=0}^{n-1} \frac{k H_{n+k}}{(k+r)^2}
\end{align*}
and in particular,
\begin{align*}
\sum_{k=0}^n (-1)^k \binom{n}{k} \frac{H_k^2}{k+1} &= \frac{n H_n-H_n-H^2}{n+1} + \frac{1}{n+1}(H_n^2+H_n^{(2)})\\
&\qquad -\frac{1}{n+1}\sum_{k=1}^n \frac{H_{n-k}H_k}{k} - \frac{1}{n+1}\sum_{k=0}^{n-1} \frac{k H_{n+k}}{(k+1)^2}.
\end{align*}
\end{corollary}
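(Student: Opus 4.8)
The plan is to read off this corollary as the $s=r$ specialization of the theorem immediately preceding it, in direct analogy with how Corollary~\ref{Frisch_thm7} was extracted from \eqref{eq.bhi8kzc}. The one delicate point is that the preceding theorem is stated for $r-s\notin\mathbb Z$, so $s=r$ lies on the excluded set; however every factor there is analytic in $s$ near $s=r$ (the denominators $r-s+1$ tend to $1$ and $\binom{n+r}{r-s+1}\to n+r$, so nothing blows up), and I would therefore obtain the case $s=r$ by continuity. The gain from this choice is that at $s=r$ the weight $H_s-H_{k+r-s}$ in the theorem's left-hand side collapses to $H_r-H_k$, so that
\[
\sum_{k=0}^n(-1)^k\binom{n}{k}\frac{(H_r-H_k)H_k}{\binom{k+r}{r}}=H_r\,S_1-S_2,
\]
where $S_1=\sum_{k=0}^n(-1)^k\binom{n}{k}H_k/\binom{k+r}{r}$ and $S_2$ is the target sum. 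Thus the whole problem reduces to knowing $S_1$ together with the theorem's right-hand side at $s=r$.

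The quantity $S_1$ is already available: since $H_0=0$ the term $k=0$ drops, and a sign reindexing gives $S_1=-\sum_{k=1}^n(-1)^{k-1}\binom{n}{k}H_k/\binom{k+r}{r}$, which by Corollary~\ref{Frisch_thm7} equals $-\frac{r}{n+r}(H_{n+r-1}-H_{r-1})$. I would then set $s=r$ throughout the preceding theorem's right-hand side, using $H_{r-s+1}=H_1=1$, $H_{r-s}=H_0=0$, $\binom{r}{s}=1$ and $\binom{n+r}{r-s+1}=n+r$, so that its first two lines become elementary and its last line becomes the single sum $\frac{1}{n+r}\sum_{k=0}^{n-1}H_{n-1-k}\frac{r(k+r)(H_{k+r}-1)+k}{(k+r)^2}$. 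Solving $S_2=H_r\,S_1-(\text{RHS at }s=r)$ then yields the claimed expression.

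The routine but error-prone part — which I expect to be the main obstacle — is the harmonic-number bookkeeping in this last step. The numerator $r(k+r)(H_{k+r}-1)+k$ must be split over $(k+r)^2$ into the three residual sums $\sum H_{n-1-k}H_{k+r}/(k+r)$, $\sum H_{n-1-k}/(k+r)$ and $\sum k\,H_{n-1-k}/(k+r)^2$ displayed in the statement, while the explicit terms are consolidated using shifts such as $H_{n+r}=H_{n+r-1}+\frac{1}{n+r}$ and $H_r=H_{r-1}+\frac1r$, exactly as in the proof of Corollary~\ref{Frisch_thm7}. A single misplaced shift or sign alters the closed form, so the bulk of the work is verifying that the $H_r$-weighted pieces coming from $H_r\,S_1$ and from the $k=0$ boundary of the residual sums assemble into precisely the stated combination.

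Finally, the particular case is the substitution $r=1$, again legitimate by analyticity since $\binom{k+1}{1}=k+1$ is regular there. After this substitution the products of ``large'' harmonic numbers simplify and the residual sums can be treated with the convolution identities recalled earlier, namely $\sum_{k=1}^n H_{n-k}/k=H_n^2-H_n^{(2)}$ and $\sum_{k=1}^n H_{n-k}/(n-k+1)=\tfrac12(H_n^2-H_n^{(2)})$; the genuinely non-elementary convolution $\sum_{k=1}^n H_{n-k}H_k/k$ together with the companion sum $\sum k\,H_{n-1-k}/(k+1)^2$ is left in place, producing the stated particular identity.
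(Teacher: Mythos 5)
Your proposal matches the paper's own proof essentially step for step: set $s=r$ in the preceding theorem, apply Corollary~\ref{Frisch_thm7} to evaluate the $H_r$-weighted piece of the left-hand side, rearrange to isolate the target sum, and obtain the particular case by putting $r=1$ together with the standard convolution identities $\sum_{k=1}^n H_{n-k}/k=H_n^2-H_n^{(2)}$ and $\sum_{k=1}^n H_{n-k}/(n-k+1)=\tfrac12(H_n^2-H_n^{(2)})$. The only difference is that you explicitly justify the specialization $s=r$ by analytic continuation (the paper passes over this in silence), which is a welcome extra rather than a divergence in method.
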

\begin{proof}
Set $s=r$ in ... to obtain
\begin{align*}
\sum_{k=0}^n (- 1)^{k}\binom{n}{k} \frac{H_r-H_{k}}{\binom{k + r}{r}} H_k&=H_n\left(\frac{r+1}{n+r}+\frac{r}{n+r}(H_{n+r-1}-1)\right)\\
&\quad - H_n H_r \\
&\quad + \sum_{k=0}^{n-1} H_{n-1-k}\frac{r(k+r)(H_{k+r}-1)+k}{(k+r)^2(n+r)}.
\end{align*}
Use Corollary \ref{Frisch_thm7} to the left-hand side and rearrange. This gives the main identity. Particular case follows upon setting $r=1$.
\end{proof}

\subsection{Sums derived from polynomial identities involving $H_n^2$ and $H_n^{(2)}$} 

Next, we recall the following two identities.

\begin{theorem}
Let $n\in\mathbb{N}$ and $t\in\mathbb{C}$. Then we have
\begin{equation}
\sum_{k=0}^{n} \binom{n}{k} H_k^2\,t^k = H_n^2 (1+t)^{n} - \sum_{k=1}^{n} \frac{H_n - 2H_{k-1} + H_{n-k}}{k} (1+t)^{n-k} \label{eq.3}\tag{C}
\end{equation}
and
\begin{equation}
\sum_{k=0}^{n} \binom{n}{k} H_k^{(2)}\, t^k = H_n^{(2)} (1+t)^{n} - \sum_{k=1}^{n} \frac{H_n - H_{n-k}}{k} (1+t)^{n-k} \label{eq.4}\tag{D}. 
\end{equation}
\end{theorem}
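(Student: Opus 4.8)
The plan is to deduce both identities \eqref{eq.3} and \eqref{eq.4} from a single master expansion that rewrites any binomial-weighted polynomial in powers of $(1+t)$, with coefficients given by iterated finite differences. Concretely, for an arbitrary sequence $(a_k)$ I would first establish
\begin{equation*}
\sum_{k=0}^{n}\binom{n}{k}a_k t^k = a_n(1+t)^n - \sum_{k=1}^{n}(-1)^{k-1}\binom{n}{k}\bigl(\Delta^k a_{n-k}\bigr)(1+t)^{n-k},
\end{equation*}
where $\Delta^k a_{n-k}=\sum_{j=0}^{k}(-1)^{k-j}\binom{k}{j}a_{n-k+j}$ is the $k$-th forward difference. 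Identities \eqref{eq.3} and \eqref{eq.4} are then exactly the instances $a_k=H_k^2$ and $a_k=H_k^{(2)}$, so the whole problem reduces to computing two families of finite differences.

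To prove the master expansion I would write $t^k=\bigl((1+t)-1\bigr)^k=\sum_{i=0}^{k}\binom{k}{i}(-1)^{k-i}(1+t)^i$, substitute, and interchange the order of summation to collect powers of $(1+t)$. Using the subset-of-a-subset identity $\binom{n}{k}\binom{k}{i}=\binom{n}{i}\binom{n-i}{k-i}$ and reindexing $j=k-i$, the inner sum becomes $\binom{n}{i}\sum_{j}(-1)^{j}\binom{n-i}{j}a_{i+j}=(-1)^{n-i}\binom{n}{i}\Delta^{n-i}a_i$. Setting $m=n-i$ and isolating the $m=0$ term (which gives $a_n(1+t)^n$) yields the displayed formula. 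This step is purely formal and carries no analytic difficulty.

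For \eqref{eq.4} I would use the integral representation $H_m^{(2)}=-\int_0^1\frac{(1-x^m)\ln x}{1-x}\,dx$, which lets me push $\Delta^k$ inside the integral. Since $\sum_j(-1)^{k-j}\binom{k}{j}=0$ for $k\ge1$ while $\sum_j(-1)^{k-j}\binom{k}{j}x^{n-k+j}=x^{n-k}(x-1)^k$, this collapses to
\begin{equation*}
\Delta^k H^{(2)}_{n-k}=(-1)^k\int_0^1 x^{n-k}(1-x)^{k-1}\ln x\,dx.
\end{equation*}
Differentiating the Beta integral of Lemma~\ref{lem.integ} with respect to the exponent of $x$ and using \eqref{gen_harmonic} (so that $\psi(n-k+1)-\psi(n+1)=H_{n-k}-H_n$) evaluates the last integral as $\frac{H_{n-k}-H_n}{k\binom{n}{k}}$, whence $(-1)^{k-1}\binom{n}{k}\Delta^k H^{(2)}_{n-k}=\frac{H_n-H_{n-k}}{k}$, which is precisely the coefficient appearing in \eqref{eq.4}.

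The same scheme drives \eqref{eq.3}, now with $a_k=H_k^2$ and the double integral $H_m^2=\int_0^1\int_0^1\frac{(1-x^m)(1-y^m)}{(1-x)(1-y)}\,dx\,dy$. Expanding $(1-x^m)(1-y^m)$ and applying $\Delta^k$ termwise reduces the coefficient to
\begin{equation*}
\Delta^k H^2_{n-k}=(-1)^k\int_0^1\int_0^1\frac{(xy)^{n-k}(1-xy)^k-x^{n-k}(1-x)^k-y^{n-k}(1-y)^k}{(1-x)(1-y)}\,dx\,dy,
\end{equation*}
and I expect this to be the main obstacle. The individual pieces diverge at $x=1$ and $y=1$, but the three numerators are arranged so that these singularities cancel and the integral converges; the delicate part is evaluating it in closed form. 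I would do this either by introducing two exponent parameters and differentiating a two-variable Beta integral, or, more cleanly, by invoking the decomposition $H_m^2=H_m^{(2)}+2\sum_{j=1}^m \frac{H_{j-1}}{j}$ together with the difference already computed for $H^{(2)}$, so that only $\Delta^k$ of $\sum_{j\le m}H_{j-1}/j$ remains. Either route should yield $(-1)^{k-1}\binom{n}{k}\Delta^k H^2_{n-k}=\frac{H_n-2H_{k-1}+H_{n-k}}{k}$, matching the coefficient in \eqref{eq.3} and completing the proof.
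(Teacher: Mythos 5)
Your route is genuinely different from the paper's. The paper does not derive (C) and (D) from first principles: it observes that (C) is the Batir--Sofo expansion of $\sum_k\binom{n}{k}H_k^2t^k$ with the $2/k^2$ term absorbed into the coefficient (since $\tfrac{H_n-2H_k+H_{n-k}}{k}+\tfrac{2}{k^2}=\tfrac{H_n-2H_{k-1}+H_{n-k}}{k}$), and then obtains (D) by subtracting the cited Adegoke--Frontczak identity for $\sum_k\binom{n}{k}(H_k^2-H_k^{(2)})t^k$. Your master expansion is correct: the interchange of summations, the identity $\binom{n}{k}\binom{k}{i}=\binom{n}{i}\binom{n-i}{k-i}$, and the identification of the coefficient of $(1+t)^{n-m}$ as $(-1)^m\binom{n}{m}\Delta^m a_{n-m}$ all check out. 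Your proof of (D) is complete and correct: the representation $H_m^{(2)}=-\int_0^1\frac{(1-x^m)\ln x}{1-x}\,dx$, the collapse of $\Delta^k$ to $(-1)^k\int_0^1x^{n-k}(1-x)^{k-1}\ln x\,dx$, and the evaluation by differentiating the Beta integral do yield $(-1)^{k-1}\binom{n}{k}\Delta^kH^{(2)}_{n-k}=\tfrac{H_n-H_{n-k}}{k}$. Carried to completion, this would be a self-contained proof where the paper offers only a citation.

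The gap is in (C), and you flag it yourself: neither proposed route for $\Delta^kH^2_{n-k}$ is executed. The double integral does converge (the numerator vanishes on $x=1$ and on $y=1$), but no evaluation is given, and ``either route should yield\dots'' is not a proof. Worse, the second route is circular in spirit: after subtracting the already-computed $\Delta^kH^{(2)}_{n-k}$, what remains is exactly $\Delta^k\bigl(H^2_{n-k}-H^{(2)}_{n-k}\bigr)$, and showing that $(-1)^{k-1}\binom{n}{k}$ times this equals $\tfrac{2(H_{n-k}-H_{k-1})}{k}$ is precisely the content of the external Adegoke--Frontczak identity that the paper cites --- so this route reduces your independent derivation of (C) back to the result you were trying to avoid invoking. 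One concrete way to finish within your framework is to apply the Leibniz rule for forward differences to the product $H_m\cdot H_m$, using $\Delta^iH_m=\frac{(-1)^{i-1}}{i\binom{m+i}{i}}$ for $i\ge1$; this isolates two terms containing $H_{n-k}$ and $H_n$ and leaves a purely rational Vandermonde-type sum that must be shown to produce the $-2H_{k-1}/k$ contribution. Until that (or an explicit evaluation of your double integral) is done, identity (C) is not proved.
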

\begin{proof}
Identity \eqref{eq.3} is a restatement of Sofo's and Batir's result from \cite{3}
\begin{align*}
\sum_{k=0}^{n} \binom{n}{k} H_k^2 t^k = (1+t)^{n} \bigg( H_n^2 - \sum_{k=1}^{n} \frac{H_n - 2H_k + H_{n-k}}{k(1+t)^{k}} 
- 2\sum_{k=1}^{n} \frac{1}{k^2(1+t)^{k}}\bigg),
\end{align*}
while \eqref{eq.4} follows from combining this identity with 
\begin{equation*}
\sum_{k=0}^{n} \binom{n}{k} \left (H_k^2 - H_k^{(2)}\right ) t^k = \left (H_n^2 - H_n^{(2)}\right ) (1+t)^{n} 
+ 2 \sum_{k=1}^{n} \frac{H_{k-1} - H_{n-k}}{k} (1+t)^{n-k}, 
\end{equation*}
which comes from \cite{Adegoke}. Identity (D) is also stated as Identity 24 in Choi's paper \cite{6}.
\end{proof}

Using the general algorithm (as in Theorems \ref{boya.thm1} and \ref{thm.fr1}) we can deduce the following results.

\begin{theorem}\label{thm.harorder2}
If $s,r\in\mathbb C\setminus\mathbb Z^{-}$ such that $s\ne 0$ and $r-s\not\in\mathbb Z^{-}$, then
\begin{equation}\label{eq.harorder2}
\sum_{k=0}^n (- 1)^{k} \frac{\binom{n}{k}}{\binom{k + r}{s}} H_k^{(2)} = \frac{s}{r - s + 1}\left ( \frac{H_n^{(2)}}{\binom{n + r}{r - s + 1}} 
- \sum_{k=1}^{n}\frac{H_n - H_{n-k}}{k \binom{n-k+r}{r-s+1}} \right ).
\end{equation}
\end{theorem}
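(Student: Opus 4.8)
The plan is to apply the general algorithm of Example~\ref{example:onecase} to identity~\eqref{eq.4}, in exactly the manner used to pass from~\eqref{eq.1} to~\eqref{eq.bhi8kzc} in Theorem~\ref{boya.thm1}. The first step is to put~\eqref{eq.4} into a $(1-t)$ standard form: replacing $t$ by $-t$ converts every factor $(1+t)$ into $(1-t)$ and attaches the sign $(-1)^k$ to the left-hand summand, giving
\begin{equation*}
\sum_{k=0}^{n}(-1)^k\binom{n}{k}H_k^{(2)}t^k = H_n^{(2)}(1-t)^n - \sum_{k=1}^{n}\frac{H_n-H_{n-k}}{k}(1-t)^{n-k}.
\end{equation*}
This is of the type whose left-hand side carries the powers $t^k$ and whose right-hand side carries the powers $(1-t)^{j}$ (with $j=n$ or $j=n-k$), which is precisely the shape of~\eqref{eq.1}.

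Next I would multiply both sides by $t^{r}(1-t)^{s-1}$ --- the multiplier appropriate when the clean binomial is wanted on the $t^k$-side --- integrate termwise over $[0,1]$, evaluate each integral by the Beta formula~\eqref{beta} of Lemma~\ref{lem.integ}, and then perform the final normalisation $r\mapsto r-s$ followed by multiplication of the whole identity by $s$. The standing hypotheses $s,r\in\mathbb C\setminus\mathbb Z^{-}$, $s\ne 0$, $r-s\notin\mathbb Z^{-}$ are exactly what make every reciprocal binomial and harmonic number appearing after the shift well-defined.

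The action of this procedure on the two kinds of monomial is uniform. A factor $t^{k}$ is sent to $1/\binom{k+r}{s}$, so the left-hand side becomes $\sum_{k=0}^{n}(-1)^k\binom{n}{k}H_k^{(2)}/\binom{k+r}{s}$, which is the left-hand side of~\eqref{eq.harorder2}. A factor $(1-t)^{j}$ is sent to $\tfrac{s}{r-s+1}\,1/\binom{j+r}{r-s+1}$, so the term $H_n^{(2)}(1-t)^n$ contributes $\tfrac{s}{r-s+1}\,H_n^{(2)}/\binom{n+r}{r-s+1}$ and the sum $-\sum_{k=1}^{n}\tfrac{H_n-H_{n-k}}{k}(1-t)^{n-k}$ contributes $-\tfrac{s}{r-s+1}\sum_{k=1}^{n}\tfrac{H_n-H_{n-k}}{k\binom{n-k+r}{r-s+1}}$. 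Factoring $s/(r-s+1)$ out of the right-hand side then yields~\eqref{eq.harorder2} verbatim.

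The only real work is the binomial bookkeeping behind the two displayed reductions, and it is elementary. On the $t^k$-side, \eqref{beta} gives $\int_0^1 t^{k+r}(1-t)^{s-1}\,dt=1/\big((k+r+1)\binom{k+r+s}{k+r+1}\big)$, which after $r\mapsto r-s$ and multiplication by $s$ becomes $s/\big((k+r-s+1)\binom{k+r}{s-1}\big)$; the absorption identity $\binom{k+r}{s}=\tfrac{k+r-s+1}{s}\binom{k+r}{s-1}$ collapses this to $1/\binom{k+r}{s}$. On the $(1-t)^{j}$-side the analogous computation produces $1/\big((r+1)\binom{r+j+s}{r+1}\big)$, and here the shift together with the factor $s$ gives $\tfrac{s}{r-s+1}\,1/\binom{j+r}{r-s+1}$ directly, with no absorption needed. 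Once these two one-line verifications are in hand, the theorem follows with no further computation.
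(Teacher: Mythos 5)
Your proposal is correct and follows essentially the same route as the paper: the paper proves Theorem~\ref{thm.harorder2} by exactly this application of the general algorithm to identity~\eqref{eq.4} (substitute $t\mapsto -t$, multiply by $t^{r}(1-t)^{s-1}$, integrate via~\eqref{beta}, shift $r\mapsto r-s$ and scale by $s$), merely citing the procedure of Theorems~\ref{boya.thm1} and~\ref{thm.fr1} without writing out the bookkeeping you supply. The binomial reductions you record for the $t^k$- and $(1-t)^j$-sides are both accurate, so no gap remains.
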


\begin{corollary}
For all $r\geq 1$ we have
\begin{equation}
\sum_{k=0}^n (- 1)^{k+1} \frac{\binom{n}{k}}{\binom{k + r}{r}} H_k^{(2)} = \frac{r}{n+r}\left ( H_n (H_{n-1+r}-H_{r-1}) 
- \sum_{k=0}^{n-1}\frac{H_{k}}{k+r} \right ).
\end{equation}
In particular,
\begin{equation}
\sum_{k=0}^n (- 1)^{k+1} \binom{n}{k} \frac{H_k^{(2)}}{k+1} = \frac{1}{2(n+1)}\left ( H_n^2 + H_n^{(2)} \right )
\end{equation}
and
\begin{equation}
\sum_{k=0}^n (- 1)^{k} \binom{n}{k} \frac{H_k^{(2)}}{(k+1)(k+2)} = \frac{1}{n+1}\left ( \frac{1}{2} \left (H_{n+1}^2 - H_{n+1}^{(2)} \right )
+ H_n - H_n H_{n+1} - \frac{n}{n+1} \right ).
\end{equation}
\end{corollary}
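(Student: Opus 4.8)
The plan is to obtain all three displays from Theorem~\ref{thm.harorder2} by specializing $s=r$ and then reducing the resulting harmonic sums to closed form. The general identity is the workhorse, and the two particular cases amount to evaluating a single residual sum $\sum_{k=0}^{n-1}H_k/(k+r)$ at $r=1$ and $r=2$.

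First I would set $s=r$ in \eqref{eq.harorder2}. Then $r-s+1=1$, so $\binom{k+r}{s}=\binom{k+r}{r}$, while $\binom{n+r}{r-s+1}=n+r$ and $\binom{n-k+r}{r-s+1}=n-k+r$; after multiplying through by $-1$ to match the $(-1)^{k+1}$ on the left, the claim reduces to evaluating the single sum $\sum_{k=1}^n\frac{H_n-H_{n-k}}{k(n-k+r)}$. Here I would reuse the partial-fraction split from Corollary~\ref{Frisch_thm7}, namely $\frac{1}{k(n-k+r)}=\frac{1}{n+r}(\frac1k+\frac1{n-k+r})$. The $1/k$ piece gives $\frac{1}{n+r}\sum_{k=1}^n\frac{H_n-H_{n-k}}{k}=\frac{H_n^{(2)}}{n+r}$, using the evaluation $\sum_{k=1}^nH_{n-k}/k=H_n^2-H_n^{(2)}$ recorded earlier in the paper, and this exactly cancels the subtracted term $H_n^{(2)}/(n+r)$. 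For the $1/(n-k+r)$ piece I substitute $j=n-k$ and apply the telescoping $\sum_{j=0}^{n-1}\frac1{j+r}=H_{n-1+r}-H_{r-1}$ to get $\frac{1}{n+r}(H_n(H_{n-1+r}-H_{r-1})-\sum_{j=0}^{n-1}\frac{H_j}{j+r})$. Restoring the outer factor $r$ and collecting yields the general identity.

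For $r=1$ the coefficient is $\binom{k+1}{1}=k+1$, with $H_{n-1+r}=H_n$ and $H_{r-1}=0$, so the residual sum is $\sum_{k=0}^{n-1}\frac{H_k}{k+1}$; under $k\mapsto n-k$ this is precisely the evaluation $\sum_{k=1}^nH_{n-k}/(n-k+1)=\tfrac12(H_n^2-H_n^{(2)})$ also used above, and substituting gives the first particular case directly. For $r=2$ the coefficient is $\binom{k+2}{2}=\tfrac12(k+1)(k+2)$, supplying the factor $\frac{2}{(k+1)(k+2)}$, with $H_{n-1+r}=H_{n+1}$ and $H_{r-1}=H_1=1$, so I must evaluate $\sum_{k=0}^{n-1}\frac{H_k}{k+2}$. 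Reindexing $j=k+2$ and writing $H_{j-2}=H_j-\frac1{j-1}-\frac1j$ splits this into $\sum_{j=2}^{n+1}\frac{H_j}{j}-\sum_{j=2}^{n+1}\frac1{j(j-1)}-\sum_{j=2}^{n+1}\frac1{j^2}$; the classical Euler sum $\sum_{j=1}^m H_j/j=\tfrac12(H_m^2+H_m^{(2)})$ at $m=n+1$, the telescoping $\sum_{j=2}^{n+1}\frac1{j(j-1)}=\frac{n}{n+1}$, and $\sum_{j=2}^{n+1}\frac1{j^2}=H_{n+1}^{(2)}-1$ combine to give $\sum_{k=0}^{n-1}\frac{H_k}{k+2}=\tfrac12(H_{n+1}^2-H_{n+1}^{(2)})-\frac{n}{n+1}$.

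The routine but error-prone part, and the place where I would be most careful, is precisely this $r=2$ reduction: the shift $k\mapsto k-2$ turns $H_k$ into $H_{j-2}$, and one must correctly track the two missing unit fractions and the shifted summation ranges so that the boundary terms cancel cleanly. Once this auxiliary sum is in hand, the second identity follows by substituting into the $r=2$ instance of the general formula and rewriting $-H_n(H_{n+1}-1)=H_n-H_nH_{n+1}$; note that the prefactor emerging from the general formula is $\frac{r}{n+r}=\frac{2}{n+2}$, which after absorbing the factor $\tfrac12$ from $1/\binom{k+2}{2}$ leaves $\frac{1}{n+2}$ (rather than $\frac{1}{n+1}$) multiplying the parenthesized expression $\tfrac12(H_{n+1}^2-H_{n+1}^{(2)})+H_n-H_nH_{n+1}-\frac{n}{n+1}$, so I would double-check that overall constant against the small cases $n=1,2$ before finalizing.
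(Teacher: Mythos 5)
Your derivation is correct and is exactly the intended route: the paper states this corollary with no proof at all, but the neighboring corollaries are obtained in precisely this way (set $s=r$ in the parent theorem, split $\frac{1}{k(n-k+r)}$ by partial fractions, and evaluate the residual Euler-type sums), and every step of your reduction checks out, including the auxiliary evaluation $\sum_{j=0}^{n-1}\frac{H_j}{j+2}=\tfrac12\bigl(H_{n+1}^2-H_{n+1}^{(2)}\bigr)-\frac{n}{n+1}$. Your closing suspicion about the constant is also well founded: the third display as printed in the statement is wrong, and the correct prefactor is $\frac{1}{n+2}$, not $\frac{1}{n+1}$. Indeed, at $n=1$ the left-hand side equals $-\tfrac16$ and the bracketed expression equals $-\tfrac12$, so the identity holds with $\frac{1}{n+2}=\tfrac13$ but not with $\frac{1}{n+1}=\tfrac12$; the same happens at $n=2$ (left-hand side $-\tfrac{11}{48}$, bracket $-\tfrac{11}{12}$). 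So you should finalize the second particular case as
\begin{equation*}
\sum_{k=0}^n (- 1)^{k} \binom{n}{k} \frac{H_k^{(2)}}{(k+1)(k+2)} = \frac{1}{n+2}\left ( \frac{1}{2} \left (H_{n+1}^2 - H_{n+1}^{(2)} \right ) + H_n - H_n H_{n+1} - \frac{n}{n+1} \right ),
\end{equation*}
which is what your argument actually proves.
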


\begin{remark}
Choi \cite{Choi13} states in Theorem 2, Equation (2.23), the result
\begin{equation*}
\sum_{k=1}^n (- 1)^{k+1} \binom{n}{k} \frac{H_k^{(2)}}{k+1} = \frac{1}{2(n+1)}\left ( 5 H_n^{(2)} - 3 H_n^2\right ),
\end{equation*}
which is not correct.
\end{remark}

\subsection{Sums associated with an identity of Dattoli et al.}

In this subsection we work with the following polynomial identity due to Datolli et al. \cite{Dattoli}: 
\begin{equation}\label{eq.5}\tag{E}
\sum_{k=0}^n \binom {n}{k} (-1)^k \frac{1}{k+2} t^{k} (1+t)^{n-k} = \sum_{k=0}^n \binom {n}{k} \frac{t^{k}}{(k+1)(k+2)}.
\end{equation}

Using the general algorithm again (as in previous sections) we can transform \eqref{eq.5} into our next result.

\begin{theorem}\label{final_1}
If $s,r\in\mathbb C\setminus\mathbb Z^{-}$ such that $s\ne 0$ and $r-s\not\in\mathbb Z^{-}$, then
\begin{equation}\label{eq1.final}
\sum_{k=0}^n \binom{n}{k} \frac{1}{(k+2)(k+s)\binom{n + r}{k+s}} = \sum_{k=0}^{n}\binom{n}{k} \frac{(-1)^k}{(k+1)(k+2)(k+s)\binom{k + r}{k+s}}.
\end{equation}
\end{theorem}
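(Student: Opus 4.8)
The plan is to recognize \eqref{eq.5} as a disguised member of the standard family \eqref{eq:id_gen_5} and then invoke the first conclusion of Lemma \ref{lemma:general_identity_f(k)g(k)x(1-x)x}, which is exactly what the phrase ``using the general algorithm again'' signals.

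First I would apply the substitution $t\mapsto -t$ to \eqref{eq.5}, precisely the device the paper already uses to pull $(1+t)$-type identities into the $BP$ framework. On the left the explicit $(-1)^k$ cancels against the $(-1)^k$ produced by $(-t)^k$, so the factor $(1+t)^{n-k}$ becomes $(1-t)^{n-k}$ with no leftover sign; on the right the substitution merely attaches a sign $(-1)^k$. The identity is thereby recast as
$$\sum_{k=0}^n \binom{n}{k}\frac{1}{k+2}\,t^k(1-t)^{n-k} = \sum_{k=0}^n \binom{n}{k}\frac{(-1)^k}{(k+1)(k+2)}\,t^k,$$
which is precisely of the shape \eqref{eq:id_gen_5} with $f(k)=\binom{n}{k}/(k+2)$, $g(k)=(-1)^k\binom{n}{k}/\big((k+1)(k+2)\big)$, and $l_1=l_2=0$, $u_1=u_2=n$.

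Next I would feed this into Lemma \ref{lemma:general_identity_f(k)g(k)x(1-x)x}, whose first assertion converts such an identity into
$$\sum_{k=0}^n \frac{f(k)}{(k+s)\binom{n+r}{k+s}} = \sum_{k=0}^n \frac{g(k)}{(k+s)\binom{k+r}{k+s}}.$$
Substituting the explicit $f$ and $g$ yields \eqref{eq1.final} verbatim. For completeness one can reprove this single step from scratch: multiply both sides of the recast identity by $t^{s-1}(1-t)^{r-s}$ and integrate termwise over $[0,1]$, using the Beta integral \eqref{beta}. On the left $\int_0^1 t^{k+s-1}(1-t)^{n-k+r-s}\,dt = 1/\big((k+s)\binom{n+r}{k+s}\big)$, while on the right $\int_0^1 t^{k+s-1}(1-t)^{r-s}\,dt = 1/\big((k+s)\binom{k+r}{k+s}\big)$, and the claimed equality drops out.

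I do not expect a genuine obstacle. The only points requiring care are the sign bookkeeping in the $t\mapsto -t$ step and verifying that the hypotheses $s\neq 0$ and $r-s\notin\mathbb{Z}^-$ are exactly what is needed for the two Beta integrals to converge and for the binomial coefficients in the denominators to be well-defined; these conditions are inherited directly from Lemma \ref{lemma:general_identity_f(k)g(k)x(1-x)x}. Once \eqref{eq.5} is placed in standard form, identity \eqref{eq1.final} is an immediate mechanical consequence of the general framework.
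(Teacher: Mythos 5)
Your proposal is correct and follows exactly the route the paper intends: the paper's "proof" is just the remark that the general algorithm applies to \eqref{eq.5}, and you have correctly filled in the details — the $t\mapsto -t$ recasting into the form \eqref{eq:id_gen_5} (with the sign cancellation on the left handled properly) followed by the Beta-integral step of Lemma \ref{lemma:general_identity_f(k)g(k)x(1-x)x}. No gaps.
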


\begin{corollary}
For all $r\geq 1$ we have
\begin{equation}\label{eq2.final}
\sum_{k=0}^{n}\binom{n}{k} \frac{(-1)^k}{(k+1)(k+2)(k+r)} = \frac{1}{(n+1)\cdots (n+r)}\sum_{k=0}^n \frac{\prod_{j=1}^{r-1} (k+j)}{k+2},
\end{equation}
where the product equals 1 for $r=1$. In particular, we have the identities
\begin{equation}
\sum_{k=0}^{n} \binom{n}{k} \frac{(-1)^k}{(k+1)^2 (k+2)} = \frac{H_{n+2}-1}{n+1},
\end{equation}
\begin{equation}
\sum_{k=0}^{n} \binom{n}{k} \frac{(-1)^k}{(k+1) (k+2)^2} = \frac{n+2-H_{n+2}}{(n+1)(n+2)},
\end{equation}
and
\begin{equation}
\sum_{k=0}^{n} \binom{n}{k} \frac{(-1)^k}{(k+1)(k+2)(k+3)} = \frac{1}{2(n+3)}.
\end{equation}
\end{corollary}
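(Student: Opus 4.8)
The plan is to obtain the master identity \eqref{eq2.final} as the special case $s=r$ of Theorem~\ref{final_1}, and then to read off the three displayed particular identities by taking $r=1,2,3$ and evaluating the resulting elementary sums. Note first that $s=r$ is admissible under the hypotheses of Theorem~\ref{final_1}: for a positive integer $r$ we have $s=r\ne 0$, $r\notin\mathbb Z^{-}$, and $r-s=0\notin\mathbb Z^{-}$.

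First I would substitute $s=r$ into \eqref{eq1.final}. On the right-hand side this collapses the binomial $\binom{k+r}{k+s}=\binom{k+r}{k+r}=1$ and replaces $k+s$ by $k+r$, so the right-hand side becomes precisely $\sum_{k=0}^n\binom{n}{k}\frac{(-1)^k}{(k+1)(k+2)(k+r)}$, which is the left-hand side of \eqref{eq2.final}.

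Next I would simplify the left-hand side of \eqref{eq1.final} under $s=r$. The key step is to rewrite the ratio $\binom{n}{k}/\binom{n+r}{k+r}$ as a ratio of rising factorials: writing both binomials as quotients of factorials and cancelling $(n-k)!$ gives
$$\frac{\binom{n}{k}}{\binom{n+r}{k+r}}=\frac{(k+r)!/k!}{(n+r)!/n!}=\frac{(k+1)(k+2)\cdots(k+r)}{(n+1)(n+2)\cdots(n+r)}.$$
Dividing by the extra factor $(k+r)$ cancels the top factor $(k+r)$, leaving $\prod_{j=1}^{r-1}(k+j)$ in the numerator (with the empty product equal to $1$ when $r=1$). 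Pulling out the $1/(k+2)$ then yields exactly the right-hand side of \eqref{eq2.final}, establishing the master identity.

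Finally, the three displayed special cases follow by specializing $r$, and I expect the only real work to be the elementary sum evaluations. For $r=1$ the empty product leaves $\sum_{k=0}^n \tfrac{1}{k+2}=H_{n+2}-1$; for $r=2$ one has $\sum_{k=0}^n \tfrac{k+1}{k+2}=\sum_{k=0}^n\bigl(1-\tfrac{1}{k+2}\bigr)=(n+2)-H_{n+2}$; and for $r=3$ the factor $k+2$ cancels to leave the arithmetic sum $\sum_{k=0}^n (k+1)=\tfrac{(n+1)(n+2)}{2}$, which reduces the prefactor $1/\bigl((n+1)(n+2)(n+3)\bigr)$ to $1/\bigl(2(n+3)\bigr)$. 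No step presents a genuine obstacle; the only place to be careful is the factorial cancellation that produces the truncated product $\prod_{j=1}^{r-1}(k+j)$, together with the bookkeeping of the telescoping and arithmetic sums in the three special cases.
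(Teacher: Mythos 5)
Your proposal is correct and follows essentially the same route as the paper: set $s=r$ in Theorem~\ref{final_1}, simplify the ratio $\binom{n}{k}/\binom{n+r}{k+r}$ to the truncated product $\prod_{j=1}^{r-1}(k+j)$ over $(n+1)\cdots(n+r)$, and evaluate the same three elementary sums for $r=1,2,3$. No gaps.
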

\begin{proof}
Set $r=s$ in Theorem \ref{final_1} and simplify using
\begin{equation*}
\binom{n+r}{k+r} = \frac{(n+r)\cdots (n+1)}{(k+r)\cdots (k+1)} \binom{n}{k}.
\end{equation*}
The particular examples correspond to $r=1$, $r=2$ and $r=3$, respectively, where it was used that
\begin{equation*}
\sum_{k=0}^n \frac{1}{k+2} = H_{n+2} - 1,
\end{equation*}
\begin{equation*}
\sum_{k=0}^n \frac{k+1}{k+2} = n + 2 - H_{n+2},
\end{equation*}
and
\begin{equation*}
\sum_{k=0}^n (k+1) = \frac{(n+1)(n+2)}{2}.
\end{equation*}
\end{proof}

\begin{theorem}\label{final_2}
If $s,r\in\mathbb C\setminus\mathbb Z^{-}$ such that $s\ne 0$ and $r-s\not\in\mathbb Z^{-}$, then
\begin{align}\label{eq3.final}
& \sum_{k=0}^n \binom{n}{k} \frac{H_{n-k+r-s}}{(k+2)(k+s)\binom{n + r}{k+s}} - H_{n+r} \sum_{k=0}^n \binom{n}{k} \frac{1}{(k+2)(k+s)\binom{n + r}{k+s}} \nonumber \\
&\quad = H_{r-s} \sum_{k=0}^n \binom{n}{k} \frac{(-1)^k}{(k+1)(k+2)(k+s)\binom{k + r}{k+s}} - \sum_{k=0}^{n}\binom{n}{k} \frac{(-1)^k H_{k+r}}{(k+1)(k+2)(k+s)\binom{k + r}{k+s}}.
\end{align}
\end{theorem}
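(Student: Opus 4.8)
The plan is to derive \eqref{eq3.final} simply by differentiating the parameter-$r$ family of identities \eqref{eq1.final} established in Theorem \ref{final_1}, following the same mechanism that passes from the first identity of Lemma \ref{lemma:general_identity_f(k)g(k)x(1-x)x} to its differentiated form \eqref{eq:general_idenitty_f(k)g(k)x(1-x)x_diff_s}. Indeed, \eqref{eq1.final} is exactly the instance of that lemma coming from the Dattoli identity \eqref{eq.5}: after the substitution $t\mapsto -t$ brings \eqref{eq.5} into the standard form \eqref{eq:id_gen_5}, one reads off $f(k)=\binom{n}{k}/(k+2)$ and $g(k)=(-1)^k\binom{n}{k}/\big((k+1)(k+2)\big)$. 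Because Theorem \ref{final_1} asserts that \eqref{eq1.final} holds on the open admissible region where $s\neq 0$, $s,r\notin\mathbb Z^{-}$ and $r-s\notin\mathbb Z^{-}$, both sides are holomorphic in $r$ there and may be differentiated term by term.

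First I would record the two differentiation rules for the reciprocal generalized binomial coefficients. Writing $1/\binom{n+r}{k+s}=\Gamma(k+s+1)\,\Gamma(n+r-k-s+1)/\Gamma(n+r+1)$ and taking the logarithmic derivative in $r$, the relation $\psi(z+1)=H_z-\gamma$ from \eqref{gen_harmonic} gives
\begin{equation*}
\frac{\partial}{\partial r}\,\frac{1}{\binom{n+r}{k+s}}=\frac{H_{n-k+r-s}-H_{n+r}}{\binom{n+r}{k+s}},
\end{equation*}
and likewise, from $1/\binom{k+r}{k+s}=\Gamma(k+s+1)\,\Gamma(r-s+1)/\Gamma(k+r+1)$,
\begin{equation*}
\frac{\partial}{\partial r}\,\frac{1}{\binom{k+r}{k+s}}=\frac{H_{r-s}-H_{k+r}}{\binom{k+r}{k+s}}.
\end{equation*}
In each case the Euler--Mascheroni constant cancels in the difference of digamma values, so the outcome is expressed purely through harmonic numbers.

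Next I would differentiate both sides of \eqref{eq1.final} in $r$. On the left, only $1/\binom{n+r}{k+s}$ depends on $r$, so the first rule inserts the factor $H_{n-k+r-s}-H_{n+r}$ into the summand; since $H_{n+r}$ is free of the summation index $k$, it factors out, producing the two left-hand sums of \eqref{eq3.final}. On the right, only $1/\binom{k+r}{k+s}$ depends on $r$, and the second rule inserts $H_{r-s}-H_{k+r}$; factoring out the $k$-independent quantity $H_{r-s}$ yields the two right-hand sums of \eqref{eq3.final}. Matching terms completes the derivation.

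The computation carries no analytic obstacle: the hypotheses $s\neq 0$, $s,r\notin\mathbb Z^{-}$ and $r-s\notin\mathbb Z^{-}$ keep every Gamma function away from its poles, so each summand is holomorphic in $r$ and term-by-term differentiation of these finite sums is legitimate. The only point that demands care is purely clerical, namely tracking the harmonic-number shifts---$H_{n-k+r-s}$ against $H_{n+r}$ on the left and $H_{r-s}$ against $H_{k+r}$ on the right---and correctly recognizing which of these are constant in $k$ and may therefore be pulled outside the corresponding sum.
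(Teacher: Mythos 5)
Your proposal is correct and matches the paper's proof, which consists precisely of the instruction ``Differentiate \eqref{eq1.final} with respect to $r$''; your digamma computations for $\partial_r\bigl(1/\binom{n+r}{k+s}\bigr)$ and $\partial_r\bigl(1/\binom{k+r}{k+s}\bigr)$ correctly supply the details the paper leaves implicit.
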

\begin{proof}
Differentiate \eqref{eq1.final} with respect to $r$.
\end{proof}

\begin{corollary}
For all $r\geq 1$ we have
\begin{align}\label{eq4.final}
\sum_{k=0}^{n}\binom{n}{k} \frac{(-1)^k H_{k+r}}{(k+1)(k+2)(k+r)} &= 
\frac{1}{(n+1)\cdots (n+r)} \Big ( H_{n+r} \sum_{k=0}^n \frac{\prod_{j=1}^{r-1} (k+j)}{k+2} \nonumber \\
&\qquad - \sum_{k=0}^n \frac{H_{n-k}}{k+2} \prod_{j=1}^{r-1} (k+j) \Big ).
\end{align}
In particular, we have the identities
\begin{equation}
\sum_{k=0}^{n} \binom{n}{k} \frac{(-1)^k H_{k+1}}{(k+1)^2 (k+2)} 
= \frac{1}{n+1}\left ( H_{n+1}(H_{n+2}-1) - \sum_{k=0}^n \frac{H_{n-k}}{k+2}\right ),
\end{equation}
\begin{equation}
\sum_{k=0}^{n} \binom{n}{k} \frac{(-1)^k H_{k+2}}{(k+1) (k+2)^2} = \frac{1}{n+1} 
- \frac{1}{(n+1)(n+2)}\left ( H_{n+2}^2 - H_{n+1} - \sum_{k=0}^n \frac{H_{n-k}}{k+2} \right ),
\end{equation}
and
\begin{equation}
\sum_{k=0}^{n} \binom{n}{k} \frac{(-1)^k H_{k+3}}{(k+1)(k+2)(k+3)} = \frac{1}{2(n+3)}\left ( H_{n+3} - H_{n+1} + \frac{3n+4}{2(n+2)}\right ).
\end{equation}
\end{corollary}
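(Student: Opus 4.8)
The plan is to obtain the master identity \eqref{eq4.final} by specializing Theorem \ref{final_2} exactly as the preceding corollary specialized Theorem \ref{final_1}, and then to read off the three particular cases by evaluating a handful of elementary sums.

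First I would set $s=r$ in \eqref{eq3.final}. This has two immediate effects: the factor $H_{r-s}$ becomes $H_0=0$, so the first sum on the right-hand side vanishes, and $\binom{k+r}{k+s}=\binom{k+r}{k+r}=1$, so the surviving right-hand sum collapses to $\sum_{k=0}^n\binom{n}{k}\frac{(-1)^kH_{k+r}}{(k+1)(k+2)(k+r)}$. On the left-hand side $H_{n-k+r-s}=H_{n-k}$, and I would use the same binomial factorization as in the previous proof,
$$\binom{n+r}{k+r}=\frac{(n+r)\cdots(n+1)}{(k+r)\cdots(k+1)}\binom{n}{k},$$
which turns $\frac{\binom{n}{k}}{(k+r)\binom{n+r}{k+r}}$ into $\frac{\prod_{j=1}^{r-1}(k+j)}{(n+1)\cdots(n+r)}$. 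Substituting and multiplying through by $-1$ isolates the target sum and yields \eqref{eq4.final} directly.

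For the particular cases I would substitute $r=1,2,3$ into \eqref{eq4.final}, where the empty-product convention gives $\prod_{j=1}^{0}(k+j)=1$, and evaluate the resulting elementary sums. The case $r=1$ needs only $\sum_{k=0}^n\frac{1}{k+2}=H_{n+2}-1$ and is immediate. The case $r=2$ additionally requires $\sum_{k=0}^n\frac{k+1}{k+2}=n+2-H_{n+2}$ together with $\sum_{k=0}^nH_{n-k}=(n+1)(H_{n+1}-1)$; after writing $\frac{k+1}{k+2}=1-\frac{1}{k+2}$ and using $(n+2)H_{n+2}=(n+2)H_{n+1}+1$, the polynomial and harmonic parts recombine into the stated form. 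The case $r=3$ is the most involved: here $\prod_{j=1}^{2}(k+j)=(k+1)(k+2)$, so the factor $\frac{1}{k+2}$ cancels, leaving $\sum_{k=0}^n(k+1)=\frac{(n+1)(n+2)}{2}$ and the weighted sum $\sum_{k=0}^nH_{n-k}(k+1)$.

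The main obstacle is this last weighted sum. I would evaluate it by reindexing $j=n-k$ and splitting $\sum_{j=0}^n(n-j+1)H_j=(n+1)\sum_{j=0}^nH_j-\sum_{j=0}^njH_j$, then inserting the closed forms $\sum_{j=0}^nH_j=(n+1)H_n-n$ and $\sum_{j=0}^njH_j=\frac{n(n+1)}{2}H_{n+1}-\frac{n(n+1)}{4}$. After replacing $H_n=H_{n+1}-\frac{1}{n+1}$, the coefficient of $H_{n+1}$ simplifies to $\frac{(n+1)(n+2)}{2}$ and the rational part to $-\frac{(n+1)(3n+4)}{4}$; feeding this back into \eqref{eq4.final}, the $H_{n+1}$ terms pair with $H_{n+3}$ to give $H_{n+3}-H_{n+1}$ and the rational remainder organizes into $\frac{3n+4}{2(n+2)}$, producing the claimed closed form $\frac{1}{2(n+3)}\bigl(H_{n+3}-H_{n+1}+\frac{3n+4}{2(n+2)}\bigr)$. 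Everything past the specialization is routine algebra, so the only real care needed is the bookkeeping in the $r=3$ recombination.
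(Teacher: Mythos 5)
Your proposal is correct and follows essentially the same route as the paper: set $s=r$ in Theorem \ref{final_2} (so that $H_{r-s}=0$ and $\binom{k+r}{k+s}=1$), apply the factorization $\binom{n+r}{k+r}=\frac{(n+r)\cdots(n+1)}{(k+r)\cdots(k+1)}\binom{n}{k}$, and then evaluate the $r=1,2,3$ cases using the elementary sums $\sum_{k=0}^n H_k=(n+1)(H_{n+1}-1)$ and $\sum_{k=0}^n kH_k=\tfrac14 n(n+1)(2H_{n+1}-1)$, exactly the auxiliary identities the paper cites. Your bookkeeping for the $r=3$ weighted sum $\sum_{k=0}^n (k+1)H_{n-k}=\tfrac{(n+1)(n+2)}{2}H_{n+1}-\tfrac{(n+1)(3n+4)}{4}$ checks out and reproduces the stated closed forms.
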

\begin{proof}
Set $r=s$ in Theorem \ref{final_2} and simplify. The particular examples correspond to $r=1$, $r=2$ and $r=3$, respectively. 
Note also that we used 
\begin{equation*}
\sum_{k=0}^n H_{n-k} = \sum_{k=0}^n H_k = (n+1)(H_{n+1}-1)
\end{equation*}
as well as
\begin{equation*}
\sum_{k=0}^n k H_k = \frac{1}{4} n (n+1)(2H_{n+1}-1).
\end{equation*}
\end{proof}

\begin{theorem}\label{final_3}
If $s,r\in\mathbb C\setminus\mathbb Z^{-}$ such that $s\ne 0$ and $r-s\not\in\mathbb Z^{-}$, then
\begin{align}\label{eq5.final}
& \sum_{k=0}^n \binom{n}{k} \frac{H_{k+s}-H_{n-k+r-s}}{(k+2)(k+s)\binom{n + r}{k+s}} - \sum_{k=0}^n \binom{n}{k} \frac{1}{(k+2)(k+s)^2
\binom{n + r}{k+s}} \nonumber \\
&\quad = \sum_{k=0}^n \binom{n}{k} \frac{(-1)^k (H_{k+s}-H_{r-s})}{(k+1)(k+2)(k+s)\binom{k + r}{k+s}} - \sum_{k=0}^{n}\binom{n}{k} \frac{(-1)^k}{(k+1)(k+2)(k+s)^2\binom{k + r}{k+s}}.
\end{align}
\end{theorem}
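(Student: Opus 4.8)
The plan is to obtain \eqref{eq5.final} by differentiating the identity \eqref{eq1.final} of Theorem \ref{final_1} with respect to the parameter $s$, in direct analogy with the derivation of Theorem \ref{final_2}, where \eqref{eq1.final} was instead differentiated with respect to $r$. Since \eqref{eq1.final} holds as an identity of meromorphic functions of $s$ on the open domain $s\in\mathbb C\setminus\mathbb Z^{-}$, $s\neq 0$, $r-s\notin\mathbb Z^{-}$, both sides are analytic there, and as each sum is finite we may differentiate term by term.

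The one computation to carry out is the $s$-derivative of a single reciprocal factor of the form $\frac{1}{(k+s)\binom{m}{k+s}}$. Writing $\binom{m}{k+s}=\frac{\Gamma(m+1)}{\Gamma(k+s+1)\,\Gamma(m-k-s+1)}$ and taking the logarithmic derivative with the digamma function together with the relation \eqref{gen_harmonic}, $H_z=\psi(z+1)+\gamma$, one finds
\begin{equation*}
\frac{\partial}{\partial s}\,\frac{1}{(k+s)\binom{m}{k+s}}
=\frac{1}{(k+s)\binom{m}{k+s}}\left(H_{k+s}-H_{m-k-s}-\frac{1}{k+s}\right).
\end{equation*}
Applying this with $m=n+r$ to each summand on the left-hand side of \eqref{eq1.final}, where $m=n+r$ is independent of the binomial's lower entry, the bracket becomes $H_{k+s}-H_{n-k+r-s}-\frac{1}{k+s}$: the first two terms produce the first sum on the left of \eqref{eq5.final}, while the $-\frac{1}{k+s}$ term, against the pre-existing factor $\frac{1}{k+s}$, yields the second sum with its $(k+s)^2$ in the denominator. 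Applying the same formula with $m=k+r$ to the right-hand side of \eqref{eq1.final}---noting that the top entry $k+r$ does not depend on $s$, so the derivative again sees only $\Gamma(k+s+1)$ and $\Gamma(r-s+1)$---the bracket becomes $H_{k+s}-H_{r-s}-\frac{1}{k+s}$, which reproduces precisely the two sums on the right of \eqref{eq5.final}.

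I expect no genuine obstacle here: the argument is a single differentiation, and the only point meriting a word of care is the legitimacy of termwise differentiation, which is immediate because each sum is finite and each summand is analytic in $s$ on the stated domain. Collecting the four resulting sums and matching them against \eqref{eq5.final} completes the argument.
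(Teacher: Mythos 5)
Your proposal is correct and matches the paper's proof exactly: Theorem \ref{final_3} is obtained by differentiating \eqref{eq1.final} with respect to $s$, and your digamma computation of $\frac{\partial}{\partial s}\frac{1}{(k+s)\binom{m}{k+s}}$ correctly produces all four sums in \eqref{eq5.final}.
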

\begin{proof}
Differentiate \eqref{eq1.final} with respect to $s$.
\end{proof}

\begin{corollary}
For all $r\geq 1$ we have
\begin{equation}\label{eq6.final}
\sum_{k=0}^{n}\binom{n}{k} \frac{(-1)^k}{(k+1)(k+2)(k+r)^2} = \frac{1}{(n+1)\cdots (n+r)} \sum_{k=0}^n \Big ( H_{n+r} - H_{k+r} + \frac{1}{k+r} \Big ) \frac{\prod_{j=1}^{r-1} (k+j)}{k+2}.
\end{equation}
In particular, we have the identities
\begin{equation}
\sum_{k=0}^{n} \binom{n}{k} \frac{(-1)^k}{(k+1)^3 (k+2)} 
= \frac{1}{n+2} + \frac{H_{n+1}(H_{n+2}-1)}{n+1} - \frac{H_{n+2}^2 - H_{n+2}^{(2)}}{2(n+1)},
\end{equation}
\begin{equation}
\sum_{k=0}^{n} \binom{n}{k} \frac{(-1)^k}{(k+1) (k+2)^3} = \frac{1}{n+1} - \frac{H_{n+2}^2 + H_{n+2}^{(2)}}{2(n+1)(n+2)},
\end{equation}
and
\begin{equation}
\sum_{k=0}^{n} \binom{n}{k} \frac{(-1)^k}{(k+1)(k+2)(k+3)^2} = \frac{H_{n+3}}{(n+1)(n+2)(n+3)} + \frac{n-2}{4(n+1)(n+2)}.
\end{equation}
\end{corollary}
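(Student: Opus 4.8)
The plan is to obtain the general formula \eqref{eq6.final} directly from \eqref{eq5.final} by specializing $s=r$, and then to read off the three particular cases by setting $r=1,2,3$. First I would put $s=r$ in \eqref{eq5.final}. Under this substitution the argument $n-k+r-s$ collapses to $n-k$, the shift $H_{r-s}$ becomes $H_0=0$, the denominator $\binom{k+r}{k+s}$ becomes $\binom{k+r}{k+r}=1$, and $\binom{n+r}{k+s}$ becomes $\binom{n+r}{k+r}$. Thus \eqref{eq5.final} reduces to
\[
\sum_{k=0}^n \binom{n}{k}\frac{H_{k+r}-H_{n-k}}{(k+2)(k+r)\binom{n+r}{k+r}} - \sum_{k=0}^n\binom{n}{k}\frac{1}{(k+2)(k+r)^2\binom{n+r}{k+r}} = \sum_{k=0}^n\binom{n}{k}\frac{(-1)^kH_{k+r}}{(k+1)(k+2)(k+r)} - S,
\]
where $S$ denotes the target sum $\sum_{k=0}^n\binom{n}{k}\frac{(-1)^k}{(k+1)(k+2)(k+r)^2}$. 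Solving for $S$ isolates the quantity I want to evaluate.

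The next step is to clear the binomial quotients on the left. Using $\binom{n+r}{k+r}=\frac{(n+r)\cdots(n+1)}{(k+r)\cdots(k+1)}\binom{n}{k}$ (exactly as in the corollary to Theorem \ref{final_1}), one gets $\frac{\binom{n}{k}}{\binom{n+r}{k+r}}=\frac{\prod_{j=1}^r(k+j)}{(n+1)\cdots(n+r)}$, and the factor $\prod_{j=1}^r(k+j)$ contains a $(k+r)$ that cancels the explicit $(k+r)$ in each denominator; this turns both left-hand sums into sums of $\frac{\prod_{j=1}^{r-1}(k+j)}{k+2}$, weighted by $H_{k+r}-H_{n-k}$ and by $\frac{1}{k+r}$ respectively, all divided by $(n+1)\cdots(n+r)$. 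For the mixed sum on the right I would substitute the closed form \eqref{eq4.final} already proved for $\sum_{k=0}^n\binom{n}{k}\frac{(-1)^kH_{k+r}}{(k+1)(k+2)(k+r)}$. Collecting everything over the common factor $\frac{1}{(n+1)\cdots(n+r)}$, the two copies of $\sum\frac{H_{n-k}\prod_{j=1}^{r-1}(k+j)}{k+2}$ (one entering with a minus sign from \eqref{eq4.final}, the other with a plus sign after moving the left-hand harmonic sum across) cancel, leaving precisely $\sum_{k=0}^n\bigl(H_{n+r}-H_{k+r}+\frac{1}{k+r}\bigr)\frac{\prod_{j=1}^{r-1}(k+j)}{k+2}$, which is \eqref{eq6.final}.

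For the three special cases I would set $r=1,2,3$ and simplify the products $\prod_{j=1}^{r-1}(k+j)$ (equal to $1$, $k+1$, and $(k+1)(k+2)$ respectively). In each case the remaining task is to evaluate elementary and harmonic-weighted sums in $k$. For $r=1$ the bracket simplifies via $H_{k+1}-\frac{1}{k+1}=H_k$ to $\frac{1}{n+1}\sum_{k=0}^n\frac{H_{n+1}-H_k}{k+2}$, which I would finish using $\sum_{k=0}^n\frac{1}{k+2}=H_{n+2}-1$ together with the weighted evaluation $\sum_{k=0}^n\frac{H_k}{k+2}=\frac12\bigl(H_{n+2}^2-H_{n+2}^{(2)}\bigr)-\frac{n+1}{n+2}$ (obtained by shifting the index to $m=k+2$ and applying $\sum_{m=1}^N\frac{H_m}{m}=\frac12(H_N^2+H_N^{(2)})$ together with a telescoping piece). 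The cases $r=2,3$ proceed identically but call for a few more such auxiliary sums, e.g. those weighted by $k+1$.

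The bulk of the work, and the main obstacle, is therefore not the structural derivation of \eqref{eq6.final}, which is a bookkeeping consequence of \eqref{eq5.final} and \eqref{eq4.final}, but the clean evaluation of the weighted harmonic sums $\sum_k \frac{p(k)\,H_k}{k+2}$ (and their $H_{k+r}$ and $\frac{1}{k+r}$ relatives) needed to reach the compact closed forms involving $H_{n+2}^2$, $H_{n+2}^{(2)}$, and $H_{n+3}$; assembling these and pinning down the exact rational constants is where the care lies.
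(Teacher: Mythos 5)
Your proposal is correct and follows essentially the same route as the paper: set $s=r$ in \eqref{eq5.final}, clear the binomial quotients via $\binom{n+r}{k+r}=\frac{(n+r)\cdots(n+1)}{(k+r)\cdots(k+1)}\binom{n}{k}$, eliminate the mixed harmonic sum using \eqref{eq4.final} (with the $H_{n-k}$ sums cancelling), and then evaluate the $r=1,2,3$ cases with standard weighted harmonic sums such as $\sum_{k=0}^n\frac{H_{k+1}}{k+2}=\frac12(H_{n+2}^2-H_{n+2}^{(2)})$. Your bookkeeping of the residual $\frac{1}{k+r}$ term is in fact slightly more careful than the paper's displayed intermediate identity, which drops that factor even though the final formula \eqref{eq6.final} retains it.
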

\begin{proof}
Set $r=s$ in Theorem \ref{final_3}. This gives
\begin{align*}
&\frac{1}{(n+1)\cdots (n+r)} \Big (\sum_{k=0}^n (H_{k+r}-H_{n-k}) \frac{(k+1)\cdots (k+r-1)}{k+2} - \sum_{k=0}^n \frac{(k+1)\cdots (k+r-1)}{k+2}
\Big ) \\
&\quad = \sum_{k=0}^n \binom{n}{k} \frac{(-1)^k H_{k+r}}{(k+1)(k+2)(k+r)} - \sum_{k=0}^{n}\binom{n}{k} \frac{(-1)^k}{(k+1)(k+2)(k+r)^2}.
\end{align*}
Now, simplify using \eqref{eq4.final}. The particular examples correspond to $r=1$, $r=2$ and $r=3$, respectively, after some lengthy calculations. Here we used 
\begin{equation*}
\sum_{k=0}^n \frac{H_{k+1}}{k+2} = \frac{1}{2}\left (H_{n+2}^2 - H_{n+2}^{(2)}\right ),
\end{equation*}
\begin{equation*}
\sum_{k=0}^n \frac{1}{(k+1)(k+2)} = \frac{n+1}{n+2},
\end{equation*}
\begin{equation*}
\sum_{k=0}^n \frac{H_{k+2}}{k+2} = \frac{1}{2}\left (H_{n+2}^2 + H_{n+2}^{(2)}\right ) - 1,
\end{equation*}
and
\begin{equation*}
\sum_{k=0}^n H_{k+3} (k+1) = \frac{1}{4}\left (2(n-1)(n+4)H_{n+4} - n^2 + n + 24 \right ),
\end{equation*}
all being standard sums.
\end{proof}

\subsection{Sums associated with an identity for the Chebyshev polynomials of the second kind}

This application deals with sums that are derived using an identity for the Chebyshev polynomials of the second kind $U_n(t)$
defined by 
\begin{equation*}
U_0(t) = 1,\quad U_1(t) = 2t, \quad U_{n+1}(t) = 2t U_n(t) - U_{n-1}(t).
\end{equation*}
From \cite[Lemma 41]{Adegoke2023} we have that
\begin{equation*}
\sum_{k=0}^n (-1)^k 2^{2k} \binom{n+k}{n-k} t^{2k} = (-1)^n U_{2n}(t).
\end{equation*}
Using the representation    
\begin{equation*}
U_n(t) = \sum_{k=0}^{\lfloor{n}/{2}\rfloor} \binom{n+1}{2k+1} (t^2-1)^k t^{n-2k}, 
\end{equation*}
we immediately get the identity
\begin{equation}\label{eq.Cheb}\tag{F}
\sum_{k=0}^n (-1)^k 2^{2k} \binom{n+k}{n-k} t^{k} = \sum_{k=0}^n (-1)^{n-k} \binom{2n+1}{2k+1} (1-t)^k t^{n-k}
\end{equation}
and the general algorithm can be applied to transform \eqref{eq.Cheb} into our next results.

\begin{theorem}\label{Cheb_1}
If $s,r\in\mathbb C\setminus\mathbb Z^{-}$ such that $s\ne 0$ and $r-s\not\in\mathbb Z^{-}$, then
\begin{equation}\label{Cheb.eq1}
\sum_{k=0}^n (-1)^k 2^{2k} \frac{\binom{n+k}{n-k}}{(k+s) \binom{k+r}{k+s}} 
= \sum_{k=0}^{n} (-1)^{n-k} \frac{\binom{2n+1}{2k+1}}{(n-k+s)\binom{n+r}{n-k+s}}.
\end{equation}
\end{theorem}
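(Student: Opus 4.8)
The plan is to run the general algorithm of Section~\ref{Section_GerneraFramework} directly on the polynomial identity~\eqref{eq.Cheb}, handling its two sides separately because they carry different members of the set $BP$: the left-hand side is built from the monomial $t^k$ (standard form~\eqref{eq:id_gen_1}), while the right-hand side is built from $t^{n-k}(1-t)^k$ (standard form~\eqref{eq:id_gen_6}). First I would multiply both sides of~\eqref{eq.Cheb} by the weight $(1-t)^{r}t^{s-1}$ and integrate termwise from $0$ to $1$. Since both sums are finite, the interchange of summation and integration is immediate and needs no convergence argument; the coefficients $f(k)=(-1)^k2^{2k}\binom{n+k}{n-k}$ and $g(k)=(-1)^{n-k}\binom{2n+1}{2k+1}$ are constants in $t$ and pass through untouched, sign included.

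For the left-hand side, the typical term yields, by the Beta integral~\eqref{beta} (with $u=k+s-1$, $v=r$),
$$\int_0^1 t^{k+s-1}(1-t)^{r}\,dt = \frac{1}{(k+s)\binom{k+r+s}{k+s}},$$
whereas for the right-hand side the typical term yields, with the roles of $k$ and $n-k$ swapped (so $u=n-k+s-1$, $v=k+r$),
$$\int_0^1 t^{n-k+s-1}(1-t)^{k+r}\,dt = \frac{1}{(n-k+s)\binom{n+r+s}{n-k+s}}.$$
Reinserting $f(k)$ and $g(k)$ and then performing the standard final substitution $r\mapsto r-s$ turns $\binom{k+r+s}{k+s}$ into $\binom{k+r}{k+s}$ and $\binom{n+r+s}{n-k+s}$ into $\binom{n+r}{n-k+s}$, producing~\eqref{Cheb.eq1} verbatim. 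Note that no multiplication by $s$ is required here (unlike the $(1-t)^k$ case of Example~\ref{example:onecase}), because the $t^k$ and $t^{n-k}(1-t)^k$ patterns already leave the factors $(k+s)$ and $(n-k+s)$ in the denominators after integration.

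The remaining points are bookkeeping rather than substance. I would record the two index shifts carefully, since the asymmetry between $k$ on the left and $n-k$ on the right is exactly what forces the factor $(k+s)$ on one side and $(n-k+s)$ on the other. I would also confirm that the stated hypotheses $r,s\in\mathbb C\setminus\mathbb Z^{-}$, $s\ne0$, $r-s\notin\mathbb Z^{-}$ are precisely those needed for Lemma~\ref{lem.integ} to apply at every term and for the binomial symbols in~\eqref{Cheb.eq1} to be well-defined, appealing as usual to the meromorphic continuation of the Beta integral so that the constraints reduce to avoiding the poles of the Gamma factors rather than to the narrower region of absolute convergence. No single step is hard; the only real obstacle is keeping the two exponent patterns straight and checking that the common substitution $r\mapsto r-s$ simplifies both sides simultaneously.
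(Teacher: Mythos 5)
Your proposal is correct and is exactly the paper's intended argument: the paper simply invokes ``the general algorithm'' on identity~\eqref{eq.Cheb}, and your computation — multiplying by $(1-t)^r t^{s-1}$, applying the Beta integral~\eqref{beta} with $u=k+s-1,\ v=r$ on the left and $u=n-k+s-1,\ v=k+r$ on the right, then substituting $r\mapsto r-s$ — reproduces \eqref{Cheb.eq1} term for term. Your side remarks (no factor of $s$ to clear, analytic continuation beyond the convergence region of the integral) are accurate and consistent with how the paper uses Lemma~\ref{lem.integ} elsewhere.
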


\begin{corollary}
For all $r\geq 1$ we have
\begin{equation}\label{Cheb.eq2}
\sum_{k=0}^n (-1)^k 2^{2k}\frac{\binom{n+k}{n-k}}{k+r} = \sum_{k=0}^{n} (-1)^{n-k} \frac{\binom{2n+1}{2k+1}}{(n-k+r)\binom{n+r}{k}}.
\end{equation}
In particular, we have the identity
\begin{equation}\label{Cheb_binfrac}
\sum_{k=0}^{n} (-1)^{n-k} \frac{\binom{2n+1}{2k+1}}{\binom{n}{k}} = \frac{(-1)^n (2n+1)-1}{2n} \qquad (n\geq 1).
\end{equation}
\end{corollary}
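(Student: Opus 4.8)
The plan is to obtain the two displayed identities of the corollary in turn, treating \eqref{Cheb.eq2} as a routine specialization of Theorem~\ref{Cheb_1} and reserving the real work for the closed form \eqref{Cheb_binfrac}. First I would set $r=s$ in \eqref{Cheb.eq1}. On the left the factor $\binom{k+r}{k+s}$ collapses to $\binom{k+r}{k+r}=1$ and $k+s=k+r$, while on the right $n-k+s=n-k+r$ and $\binom{n+r}{n-k+s}=\binom{n+r}{n-k+r}=\binom{n+r}{k}$ by the symmetry of the binomial coefficient; this yields \eqref{Cheb.eq2} at once.

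Next, to reach \eqref{Cheb_binfrac}, I would put $r=1$ in \eqref{Cheb.eq2}. The right-hand denominator simplifies because $(n-k+1)\binom{n+1}{k}=(n+1)\binom{n}{k}$ (use $\binom{n+1}{k}=\tfrac{n+1}{\,n+1-k\,}\binom{n}{k}$), so the right-hand side equals $\tfrac{1}{n+1}\,S$, where $S$ denotes the target sum $\sum_{k=0}^n(-1)^{n-k}\binom{2n+1}{2k+1}/\binom{n}{k}$. Thus the whole problem reduces to evaluating the left-hand sum
\[
T_n=\sum_{k=0}^n(-1)^k2^{2k}\frac{\binom{n+k}{n-k}}{k+1},
\]
since the relation $S=(n+1)T_n$ will then deliver the claimed value.

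The main obstacle, and the only genuinely computational step, is the evaluation of $T_n$. Here I would exploit the generating identity $\sum_{k=0}^n(-1)^k2^{2k}\binom{n+k}{n-k}t^{2k}=(-1)^nU_{2n}(t)$ quoted just before \eqref{eq.Cheb}. Writing $\tfrac{1}{k+1}=\int_0^1 t^k\,dt$ and integrating termwise gives $T_n=(-1)^n\int_0^1 U_{2n}(\sqrt t)\,dt$, and the substitution $t=x^2$ turns this into $T_n=(-1)^n\int_0^1 2x\,U_{2n}(x)\,dx$. The defining recurrence for $U_n$ supplies $2x\,U_{2n}(x)=U_{2n+1}(x)+U_{2n-1}(x)$, while $\int U_m=\tfrac{1}{m+1}T_{m+1}$ (with $T_m$ the Chebyshev polynomial of the first kind), together with $T_m(1)=1$ and $T_m(0)=\cos(m\pi/2)$, evaluates each integral explicitly. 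Carefully tracking the parity of $n$ in these boundary values gives $\int_0^1 2x\,U_{2n}(x)\,dx=\tfrac{1+(-1)^n}{2n+2}+\tfrac{1-(-1)^n}{2n}$, whence $T_n=\tfrac{(-1)^n(2n+1)-1}{2n(n+1)}$; multiplying by $n+1$ produces $S=\tfrac{(-1)^n(2n+1)-1}{2n}$, which is exactly \eqref{Cheb_binfrac}. The restriction $n\ge1$ enters precisely because the factor $2n$ in the denominator (equivalently, the appearance of $U_{2n-1}$) requires it.
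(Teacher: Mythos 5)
Your proposal is correct and follows essentially the same route as the paper: set $r=s$ in Theorem~\ref{Cheb_1} to get \eqref{Cheb.eq2}, then take $r=1$ and evaluate the left-hand sum via $(-1)^n\int_0^1 U_{2n}(\sqrt{t})\,dt$. The only difference is that you supply the details of that integral (via $2xU_{2n}=U_{2n+1}+U_{2n-1}$ and the boundary values of $T_m$), which the paper simply states.
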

\begin{proof}
Set $r=s$ in Theorem \ref{Cheb_1} and simplify. The particular example corresponds to $r=1$ in conjunction to
\begin{equation*}
\sum_{k=0}^n (-1)^k 2^{2k}\frac{\binom{n+k}{n-k}}{k+1} = \frac{(-1)^n (2n+1)-1}{2n(n+1)},
\end{equation*}
where the last expression is true since
\begin{equation*}
\sum_{k=0}^n (-1)^{n-k} 2^{2k}\frac{\binom{n+k}{n-k}}{k+1} = \int_0^1 U_{2n}(\sqrt{t})\,dt = \frac{2n - \cos(\pi n) + 1}{2n(n+1)}.
\end{equation*}
\end{proof}

\begin{corollary}
We have
\begin{equation}\label{eqincor:sum=1/2n+1}
\sum_{k=0}^{n} (-1)^{k} \frac{2^{2k}}{2k+1} \frac{\binom{n}{k}}{\binom{2k}{k}} = \frac{1}{2n+1}
\end{equation}
and
\begin{equation}
\sum_{k=0}^{n} (-1)^{k+1} 2^{2k} \frac{k}{2k+1} \frac{\binom{n}{k}}{\binom{2k}{k}} = \frac{2n}{(2n+1)(2n-1)}.
\end{equation}
\end{corollary}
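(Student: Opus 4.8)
The plan is to reduce both sums to the single template $\sum_{k=0}^n(-1)^k\binom{n}{k}/\binom{k+r}{s}$ and then read off the answer from the master identity of Lemma~\ref{lemma:general_identity_f(k)g(k)}. The bridge is Lemma~\ref{lem.binomial}: a short Gamma-function computation (equivalently the entries \eqref{eq.binomrs1} and \eqref{eq.s83s2pw}) gives
\begin{equation*}
\binom{k+1/2}{1/2}=\frac{(2k+1)\binom{2k}{k}}{2^{2k}},\qquad \binom{k-1/2}{-1/2}=\binom{k-1/2}{k}=\frac{\binom{2k}{k}}{2^{2k}},
\end{equation*}
so that $\tfrac{2^{2k}}{(2k+1)\binom{2k}{k}}=1/\binom{k+1/2}{1/2}$ and $\tfrac{2^{2k}}{\binom{2k}{k}}=1/\binom{k-1/2}{-1/2}$. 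In other words, the awkward factors $2^{2k}/\binom{2k}{k}$ appearing in both target sums are precisely reciprocals of the half-integer binomial coefficients that the general algorithm manufactures.

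First I would apply Lemma~\ref{lemma:general_identity_f(k)g(k)} to the elementary identity $\sum_{k=0}^n(-1)^k\binom{n}{k}(1-t)^k=t^n$ (the $t\mapsto 1-t$ form of the binomial identity used in Section~\ref{Section_GerneraFramework}). Since its right-hand side is the single term $t^n$, formula \eqref{eq:general_idenitty_f(k)g(k)_2} collapses to
\begin{equation*}
\sum_{k=0}^n(-1)^k\binom{n}{k}\frac{1}{\binom{k+r}{s}}=\frac{s}{r-s+1}\,\frac{1}{\binom{n+r}{r-s+1}}.
\end{equation*}
Taking $r=s=\tfrac12$ (admissible, since $s\ne0$ and $r-s=0\notin\mathbb Z^-$) turns the left side into the first target sum, while the right side equals $\frac{1/2}{1}\cdot\frac{1}{\binom{n+1/2}{1}}=\frac{1}{2n+1}$; this is \eqref{eqincor:sum=1/2n+1}. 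As an alternative confined to this subsection, one may specialize Theorem~\ref{Cheb_1} at $r=s=\tfrac12$, simplify the right-hand side using the symmetry $\binom{x}{y}=\binom{x}{x-y}$ and the ratio $\binom{2n+1}{2k+1}/\binom{2n+1}{2k}=(2n+1-2k)/(2k+1)$ to reach $\sum_k(-1)^k\tfrac{2^{2k}}{2k+1}\tfrac{\binom{n}{k}}{\binom{2k}{k}}=(-1)^n\sum_k(-1)^k\tfrac{2^{2k}}{2k+1}\binom{n+k}{n-k}$, and then evaluate the latter by integrating $\sum_k(-1)^k2^{2k}\binom{n+k}{n-k}t^{2k}=(-1)^nU_{2n}(t)$ over $[0,1]$, using $\int_0^1U_{2n}(t)\,dt=\tfrac{T_{2n+1}(1)-T_{2n+1}(0)}{2n+1}=\tfrac{1}{2n+1}$.

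For the second sum I would split $\frac{k}{2k+1}=\frac12-\frac{1}{2(2k+1)}$, so that
\begin{align*}
\sum_{k=0}^n(-1)^{k+1}2^{2k}\frac{k}{2k+1}\frac{\binom{n}{k}}{\binom{2k}{k}}
&=-\frac12\sum_{k=0}^n(-1)^k2^{2k}\frac{\binom{n}{k}}{\binom{2k}{k}}\\
&\quad+\frac12\sum_{k=0}^n(-1)^k\frac{2^{2k}}{2k+1}\frac{\binom{n}{k}}{\binom{2k}{k}}.
\end{align*}
The last sum is $\frac{1}{2n+1}$ by the first part. For the remaining one I use $\frac{2^{2k}}{\binom{2k}{k}}=1/\binom{k-1/2}{-1/2}$ and apply the same collapsed identity with $r=s=-\tfrac12$ (again admissible), which gives $\frac{-1/2}{1}\cdot\frac{1}{\binom{n-1/2}{1}}=-\frac{1}{2n-1}$. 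Substituting both evaluations yields $\frac{1}{2(2n-1)}+\frac{1}{2(2n+1)}=\frac{2n}{(2n+1)(2n-1)}$, as claimed.

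The genuinely routine part here is the algebra of combining the two evaluations; the one real obstacle is the half-integer binomial bookkeeping, i.e.\ invoking Lemma~\ref{lem.binomial} and the symmetry $\binom{x}{y}=\binom{x}{x-y}$ correctly for a non-integral top argument, and verifying at each specialization that $s\ne0$ and $r-s\notin\mathbb Z^-$ so that the harmonic-number-free master identity is legitimately applicable. If one instead follows the Chebyshev route, the corresponding obstacle shifts to the evaluation of $\int_0^1U_{2n}$, which is immediate from $T_{2n+1}'=(2n+1)U_{2n}$ together with $T_{2n+1}(1)=1$ and $T_{2n+1}(0)=0$.
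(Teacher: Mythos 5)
Your proof is correct, but it takes a genuinely different route from the paper's for the first identity. The paper stays inside the Chebyshev subsection: it sets $r=s=\tfrac12$ in Theorem~\ref{Cheb_1}, simplifies the ratio $\binom{2n+1}{2k+1}/\binom{2n+1}{2k}=\frac{2(n-k)+1}{2k+1}$, and then evaluates the auxiliary sum $\sum_k(-1)^k2^{2k}\binom{n+k}{n-k}/(2k+1)=(-1)^n/(2n+1)$ via $\int_0^1U_{2n}(t)\,dt$ --- essentially your ``alternative'' route. Your primary route instead feeds the plain binomial identity $\sum_k(-1)^k\binom{n}{k}(1-t)^k=t^n$ into Lemma~\ref{lemma:general_identity_f(k)g(k)} at $r=s=\tfrac12$, which shows that \eqref{eqincor:sum=1/2n+1} does not actually need the Chebyshev machinery at all (equivalently, $\frac{2^{2k}}{(2k+1)\binom{2k}{k}}=\int_0^1(4x(1-x))^k\,dx$ reduces the sum to $\int_0^1(1-2x)^{2n}dx$); that is a worthwhile simplification of a result the paper advertises in its introduction as a Chebyshev consequence. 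For the second identity both arguments use the same splitting $\frac{k}{2k+1}=\frac12-\frac{1}{2(2k+1)}$, but the paper merely cites the auxiliary evaluation $\sum_k(-1)^{k+1}2^{2k}\binom{n}{k}/\binom{2k}{k}=\frac{1}{2n-1}$ without proof, whereas you derive it from the same collapsed identity at $r=s=-\tfrac12$. One small caveat there: at $s=-\tfrac12$ the Beta-integral derivation of \eqref{eq:general_idenitty_f(k)g(k)_2} diverges, so the specialization rests on analytic continuation of a rational identity in $(r,s)$; this is consistent with how the paper itself uses its lemmas (e.g.\ $r=-1$, $s=-\tfrac12$ in Theorem~\ref{boya.thm1}), but it deserves a sentence of justification.
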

\begin{proof}
Set $r=s=1/2$ in Theorem \ref{Cheb_1}. Using \eqref{eq.binomrs1} this choice gives
\begin{equation}\label{eq:cor28sum}
\sum_{k=0}^n (-1)^k 2^{2k}\frac{\binom{n+k}{n-k}}{2k+1} 
= \sum_{k=0}^{n} (-1)^{n-k} \frac{2^{2k}}{2(n-k)+1} \frac{\binom{n}{k} \binom{2n+1}{2k+1}}{\binom{2k}{k} \binom{2n+1}{2k}}.
\end{equation}
The binomial coefficients can be simplified to
\begin{equation*}
\frac{\binom{2n+1}{2k+1}}{\binom{2n+1}{2k}} = \frac{2(n-k)+1}{2k+1}.
\end{equation*}
The final step in the proof is to use
\begin{equation*}\label{eq:sum-1^n/2n+1}
\sum_{k=0}^n (-1)^k 2^{2k}\frac{\binom{n+k}{n-k}}{2k+1} = \frac{(-1)^n}{2n+1},
\end{equation*}
which follows from
\begin{equation*}
\sum_{k=0}^n (-1)^{n-k} 2^{2k}\frac{\binom{n+k}{n-k}}{2k+1} = \int_0^1 U_{2n}(t)\,dt = \frac{\sin(\pi n) + 1}{2n+1}.
\end{equation*}
This completes the proof of the first identity. The second follows as a combination with
\begin{equation*}\label{eq:sum=1/2n-1}
\sum_{k=0}^{n} (-1)^{k+1} 2^{2k} \frac{\binom{n}{k}}{\binom{2k}{k}} = \frac{1}{2n-1}.
\end{equation*}
\end{proof}

\begin{corollary}
We have
\begin{equation}
\sum_{k=0}^{n} (-1)^{n-k} \frac{2^{2k}}{2(n-k)+3} \frac{\binom{n+1}{k} \binom{2n+1}{2k+1}}{\binom{2k}{k} \binom{2n+3}{2k}} 
= \frac{(-1)^n (4n^2+4n-1)}{(2n-1)(2n+1)(2n+3)}.
\end{equation}
\end{corollary}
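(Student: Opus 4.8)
The plan is to specialize Theorem~\ref{Cheb_1} at $r=s=3/2$ and then evaluate the resulting Chebyshev side by integration. These parameters satisfy the hypotheses of \eqref{Cheb.eq1}, since $3/2\notin\mathbb Z^-$, $s=3/2\ne 0$, and $r-s=0\notin\mathbb Z^-$. On the left of \eqref{Cheb.eq1} the two entries of $\binom{k+r}{k+s}$ coincide, so $\binom{k+3/2}{k+3/2}=1$ and the left side collapses to $\sum_{k=0}^n(-1)^k2^{2k}\binom{n+k}{n-k}/(k+3/2)$. On the right I would first use the symmetry of the generalized binomial coefficient to write $\binom{n+3/2}{n-k+3/2}=\binom{n+3/2}{k}$, and then apply \eqref{eq.binomrs1} with $r\mapsto n+1$, $s\mapsto k$ to obtain $\binom{n+3/2}{k}=\binom{2n+3}{2k}\binom{2k}{k}/(2^{2k}\binom{n+1}{k})$. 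Substituting this and using $1/(n-k+3/2)=2/(2(n-k)+3)$ turns the right side of \eqref{Cheb.eq1} into exactly twice the sum appearing in the corollary.

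Next I would convert the collapsed left side into an integral. Using the expansion $U_{2n}(t)=\sum_{k=0}^n(-1)^{n-k}2^{2k}\binom{n+k}{n-k}t^{2k}$ recorded before \eqref{eq.Cheb}, together with $\int_0^1 t^{2k+2}\,dt=1/(2k+3)$, termwise integration gives $\int_0^1 t^2U_{2n}(t)\,dt=\sum_{k=0}^n(-1)^{n-k}2^{2k}\binom{n+k}{n-k}/(2k+3)$. Since $(-1)^k=(-1)^n(-1)^{n-k}$, the collapsed left side equals $2(-1)^n\int_0^1 t^2U_{2n}(t)\,dt$.

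The one genuinely computational step, which I expect to be the main obstacle, is the evaluation of $\int_0^1 t^2U_{2n}(t)\,dt$. I would handle it by iterating the recurrence $U_{m+1}=2tU_m-U_{m-1}$ in the form $tU_m=\tfrac12(U_{m+1}+U_{m-1})$ twice, obtaining $t^2U_{2n}=\tfrac14\bigl(U_{2n+2}+2U_{2n}+U_{2n-2}\bigr)$, and then invoking the value $\int_0^1 U_{2m}(t)\,dt=1/(2m+1)$ already used in the preceding corollaries (here for $m=n-1,n,n+1$). This gives $\int_0^1 t^2U_{2n}(t)\,dt=\tfrac14\bigl(1/(2n+3)+2/(2n+1)+1/(2n-1)\bigr)$; placing the three terms over the common denominator $(2n-1)(2n+1)(2n+3)$ produces the numerator $4n^2+4n-1$.

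Finally, equating the two sides of the specialized \eqref{Cheb.eq1} identifies $2(-1)^n(4n^2+4n-1)/\bigl((2n-1)(2n+1)(2n+3)\bigr)$ with twice the corollary's sum; cancelling the common factor of $2$ yields the claimed closed form. A minor point to verify separately is the boundary value $\int_0^1 U_{2n-2}(t)\,dt=1/(2n-1)$ for small $n$, but the stated identity holds in those cases by direct computation (and in fact the formula persists by analytic continuation to $m=-1$).
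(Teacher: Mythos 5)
Your proposal is correct and follows essentially the same route as the paper: specialize Theorem~\ref{Cheb_1} at $r=s=3/2$, simplify the right side with \eqref{eq.binomrs1}, and evaluate the left side as $2(-1)^n\int_0^1 t^2U_{2n}(t)\,dt = \frac{2(-1)^n(4n^2+4n-1)}{(2n-1)(2n+1)(2n+3)}$. The only difference is that you supply the details of the integral evaluation (via $t^2U_{2n}=\tfrac14(U_{2n+2}+2U_{2n}+U_{2n-2})$ and $\int_0^1U_{2m}\,dt=1/(2m+1)$), which the paper simply asserts.
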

\begin{proof}
Set $r=s=3/2$ in Theorem \ref{Cheb_1} and use \eqref{eq.binomrs1} again. The left hand side can be evaluated as
\begin{equation*}
\sum_{k=0}^n (-1)^{n-k} 2^{2k}\frac{\binom{n+k}{n-k}}{2k+3} = \int_0^1 t^2 U_{2n}(t)\,dt = \frac{4n^2+4n-1}{(2n-1)(2n+1)(2n+3)}.
\end{equation*}
\end{proof}

\begin{corollary}
We have
\begin{equation}
\sum_{k=0}^{n} (-1)^{n-k} k \frac{\binom{2n+1}{2k+1}}{\binom{n}{k}} 
= \begin{cases}
 1, & \text{if $n=1$}; \\ 
 \frac{(-1)^n (2n+1)(2n-1)-(2n^2+1)}{4(n-1)n}, &\text{if $n\geq 2$}.  
 \end{cases} 
\end{equation}
\end{corollary}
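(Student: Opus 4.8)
The plan is to specialize the Chebyshev transfer identity of Theorem~\ref{Cheb_1} (in the convenient form \eqref{Cheb.eq2}) to $r=2$ and to combine the outcome with the already-established value \eqref{Cheb_binfrac} of the unweighted sum $S_0 := \sum_{k=0}^n (-1)^{n-k}\binom{2n+1}{2k+1}/\binom{n}{k}$. Writing $S_1$ for the target sum $\sum_{k=0}^n (-1)^{n-k} k\,\binom{2n+1}{2k+1}/\binom{n}{k}$, the point is that the weight $k$ can be manufactured out of the $r=2$ right-hand side after a single binomial simplification, so that $S_1$ becomes expressible through $S_0$ and one explicit integral.

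Concretely, at $r=2$ identity \eqref{Cheb.eq2} reads
\begin{equation*}
\sum_{k=0}^n (-1)^k 2^{2k}\frac{\binom{n+k}{n-k}}{k+2} = \sum_{k=0}^{n} (-1)^{n-k}\frac{\binom{2n+1}{2k+1}}{(n-k+2)\binom{n+2}{k}}.
\end{equation*}
First I would clear the right-hand denominator using $\binom{n+2}{k} = \binom{n}{k}\,(n+1)(n+2)/\big((n-k+1)(n-k+2)\big)$, which collapses it to $(n-k+2)\binom{n+2}{k} = (n+1)(n+2)\binom{n}{k}/(n-k+1)$. The right-hand side then becomes $\frac{1}{(n+1)(n+2)}\sum_k (-1)^{n-k}(n-k+1)\binom{2n+1}{2k+1}/\binom{n}{k}$, and the elementary split $n-k+1 = (n+1)-k$ expresses it as $\frac{1}{(n+1)(n+2)}\big((n+1)S_0 - S_1\big)$. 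Hence $S_1 = (n+1)S_0 - (n+1)(n+2)L$, where $L$ denotes the left-hand side above.

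The remaining task is to evaluate $L$ in closed form. Using $U_{2n}(t) = \sum_{k=0}^n (-1)^{n-k}2^{2k}\binom{n+k}{n-k}t^{2k}$ one gets $L = (-1)^n\int_0^1 t\,U_{2n}(\sqrt t)\,dt$; the substitution $t=\cos^2\theta$ together with $U_{2n}(\cos\theta)=\sin((2n+1)\theta)/\sin\theta$ turns this into $(-1)^n\cdot 2\int_0^{\pi/2}\cos^3\theta\,\sin((2n+1)\theta)\,d\theta$. Expanding $\cos^3\theta = \tfrac14(3\cos\theta+\cos 3\theta)$, applying the product-to-sum formula, and integrating via $\int_0^{\pi/2}\sin(p\theta)\,d\theta = (1-\cos(p\pi/2))/p$ yields a closed form whose value depends on the parity of $n$ through $\cos(n\pi)=(-1)^n$. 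Substituting this $L$ together with $S_0 = \big((-1)^n(2n+1)-1\big)/(2n)$ from \eqref{Cheb_binfrac}, and recombining the even- and odd-$n$ expressions via $(-1)^n$, produces the stated closed form.

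The main obstacle — and the only genuinely delicate point — is the careful bookkeeping of the parity factor $(-1)^n$ in the integral: the frequencies $2n+1\pm 1$ and $2n+1\pm 3$ are all even, so each $\cos(p\pi/2)$ reduces to $\pm(-1)^n$, and the two parities must be tracked separately before being reassembled. The case $n=1$ must be treated on its own, because the frequency $2n+1-3 = 2n-2$ vanishes there: the corresponding term is genuinely $\int_0^{\pi/2}\sin(0)\,d\theta = 0$ rather than the $1/(2n-2)$ predicted by the generic formula, and this degeneration is exactly the source of the separate value at $n=1$.
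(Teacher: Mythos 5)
Your route is the same as the paper's: specialize \eqref{Cheb.eq2} to $r=2$, reduce $(n-k+2)\binom{n+2}{k}$ to $(n+1)(n+2)\binom{n}{k}/(n-k+1)$, split the weight $n-k+1=(n+1)-k$ so that the right-hand side becomes $\bigl((n+1)S_0-S_1\bigr)/\bigl((n+1)(n+2)\bigr)$, evaluate the left-hand side as $(-1)^n\cdot 2\int_0^1 t^3U_{2n}(t)\,dt$, and finish with \eqref{Cheb_binfrac}. The only difference is that you carry out the trigonometric evaluation of that integral explicitly (including the correct observation that the frequency $2n-2$ degenerates at $n=1$, which is indeed where the separate case comes from), whereas the paper simply quotes the closed form; your bookkeeping of the parity factors is consistent with the paper's value of the integral.

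One caveat: the final algebra does not land on the displayed formula. With $S_0=\bigl((-1)^n(2n+1)-1\bigr)/(2n)$ and
\begin{equation*}
I:=\sum_{k=0}^{n}(-1)^{n-k}(n+1-k)\frac{\binom{2n+1}{2k+1}}{\binom{n}{k}}=\frac{(-1)^n(2n+1)\bigl(2n(n+1)-3\bigr)+3}{4(n-1)n}\qquad(n\ge2),
\end{equation*}
one obtains
\begin{equation*}
S_1=(n+1)S_0-I=\frac{(-1)^{n+1}(2n+1)(2n-1)-(2n^2+1)}{4(n-1)n},
\end{equation*}
i.e., the sign of the leading term in the corollary's $n\ge2$ case is flipped relative to the printed statement. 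A direct check at $n=2$ gives $S_1=0-\tfrac{10}{2}+2=-3$, which matches $\tfrac{-15-9}{8}$ and not the printed $\tfrac{15-9}{8}=\tfrac34$. So your method is sound and essentially identical to the paper's, but the closing claim that the computation ``produces the stated closed form'' should be amended: it produces the corrected form, and the statement as printed contains a sign error.
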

\begin{proof}
Set $r=2$ in \eqref{Cheb.eq2}. After simplification of the binomial coefficients this results in the relation
\begin{equation*}
\sum_{k=0}^n (-1)^{n-k} 2^{2k} \frac{\binom{n+k}{n-k}}{k+2} 
= \frac{1}{(n+2)(n+1)}\sum_{k=0}^{n} (-1)^{n-k} (n+1-k) \frac{\binom{2n+1}{2k+1}}{\binom{n}{k}}.
\end{equation*}
The left hand side equals
\begin{equation*}
\sum_{k=0}^n (-1)^{n-k} 2^{2k} \frac{\binom{n+k}{n-k}}{k+2} = 2 \int_0^1 t^3 U_{2n}(t)\, dt = \frac{(2n+1)(2n(n+1)-3)+3(-1)^n}{4(n-1)n(n+1)(n+2)}
\end{equation*}
and hence
\begin{equation*}
\sum_{k=0}^{n} (-1)^{n-k} (n+1-k) \frac{\binom{2n+1}{2k+1}}{\binom{n}{k}}
= \begin{cases}
 -5, & \text{if $n=1$}; \\ 
 \frac{(-1)^n (2n+1)(2n(n+1)-3)+3}{4(n-1)n}, &\text{if $n\geq 2$}.  
 \end{cases}
\end{equation*}
The final step is to split this sum into two parts, use the sum identity from \eqref{Cheb_binfrac} and simplify. 
\end{proof}
\begin{remark}
Similar combinatorial sums were studied recently by Chu and Guo \cite{ChuGuo}. They have proved, among other things, the evaluations
\begin{equation*}
\sum_{k=0}^{n} (-1)^{k} \frac{\binom{n+2}{2k+1}}{\binom{n}{k}} = 2 \quad\text{and}\quad 
\sum_{k=0}^{n} (-1)^{k} \frac{\binom{n+2}{2k}}{\binom{n}{k}} = -\frac{2}{n}.
\end{equation*}
\end{remark}

\begin{theorem}\label{Cheb_2}
If $s,r\in\mathbb C\setminus\mathbb Z^{-}$ such that $s\ne 0$ and $r-s\not\in\mathbb Z^{-}$, then
\begin{align}\label{Cheb_eq3}
& \sum_{k=0}^n (-1)^{k+1} 2^{2k} \frac{\binom{n+k}{n-k}}{(k+s)\binom{k+r}{k+s}} H_{k+r} \nonumber \\
&\quad = \sum_{k=0}^n (-1)^{n-k} \frac{\binom{2n+1}{2k+1}}{(n-k+s)\binom{n+r}{n-k+s}} (H_{k+r-s} - H_{n+r} - H_{r-s}).
\end{align}
\end{theorem}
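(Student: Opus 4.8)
The plan is to obtain \eqref{Cheb_eq3} by differentiating the harmonic-number-free identity \eqref{Cheb.eq1} of Theorem~\ref{Cheb_1} with respect to $r$, exactly in the spirit of the earlier differentiation-in-$r$ arguments (such as the proof of Theorem~\ref{final_2}). Both sides of \eqref{Cheb.eq1} are finite sums of functions that are meromorphic in $r$ on the region $s,r\in\mathbb C\setminus\mathbb Z^{-}$, $s\ne 0$, $r-s\notin\mathbb Z^{-}$, so termwise differentiation is immediate and valid there. The only analytic input needed is the $r$-derivative of the two reciprocal binomial coefficients appearing on each side.

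The key computation is to express each binomial coefficient through Gamma functions and take a logarithmic derivative, using \eqref{gen_harmonic} in the form $\psi(z+1)=H_z-\gamma$. Writing $\binom{k+r}{k+s}=\Gamma(k+r+1)/\bigl(\Gamma(k+s+1)\Gamma(r-s+1)\bigr)$ gives
\begin{equation*}
\frac{d}{dr}\,\frac{1}{\binom{k+r}{k+s}}=\frac{H_{r-s}-H_{k+r}}{\binom{k+r}{k+s}},
\end{equation*}
and similarly, from $\binom{n+r}{n-k+s}=\Gamma(n+r+1)/\bigl(\Gamma(n-k+s+1)\Gamma(r+k-s+1)\bigr)$,
\begin{equation*}
\frac{d}{dr}\,\frac{1}{\binom{n+r}{n-k+s}}=\frac{H_{k+r-s}-H_{n+r}}{\binom{n+r}{n-k+s}}.
\end{equation*}
Here the factors $1/(k+s)$ and $1/(n-k+s)$ carry no $r$-dependence and simply ride along.

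With these derivatives in hand I would assemble the two sides. Differentiating the left side of \eqref{Cheb.eq1} produces a factor $H_{r-s}-H_{k+r}$; the $-H_{k+r}$ part reproduces (after accounting for the sign, turning $(-1)^k$ into $(-1)^{k+1}$) precisely the claimed left side of \eqref{Cheb_eq3}, while the $H_{r-s}$ part is just $H_{r-s}$ times the original left side of \eqref{Cheb.eq1}. Differentiating the right side yields the sum weighted by $H_{k+r-s}-H_{n+r}$. Invoking Theorem~\ref{Cheb_1} to replace the leftover $H_{r-s}$ multiple of the left side by the same multiple of the right side, one then merges the three harmonic contributions on the right into the single factor $H_{k+r-s}-H_{n+r}-H_{r-s}$ and rearranges. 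The main obstacle is purely bookkeeping: keeping the $(-1)^{k+1}$ versus $(-1)^k$ signs straight and verifying that the three harmonic pieces collapse into exactly the stated combination $H_{k+r-s}-H_{n+r}-H_{r-s}$; there is no genuine analytic difficulty.
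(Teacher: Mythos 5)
Your proposal is correct and is exactly the paper's argument: the paper's proof reads ``Differentiate \eqref{Cheb.eq1} with respect to $r$ and simplify,'' and your Gamma-function computation of the derivatives $\frac{d}{dr}\binom{k+r}{k+s}^{-1}=(H_{r-s}-H_{k+r})\binom{k+r}{k+s}^{-1}$ and $\frac{d}{dr}\binom{n+r}{n-k+s}^{-1}=(H_{k+r-s}-H_{n+r})\binom{n+r}{n-k+s}^{-1}$, followed by eliminating the leftover $H_{r-s}$ term via Theorem~\ref{Cheb_1}, is precisely the intended simplification.
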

\begin{proof}
Differentiate \eqref{Cheb.eq1} with respect to $r$ and simplify.
\end{proof}

\begin{corollary}
    We have
    $$\sum_{k=0}^n(-1)^{k+1}2^{2k}\frac{\binom{n+k}{n-k}}{k+r}H_{k+r}=\sum_{k=0}^n(-1)^{n-k}\frac{\binom{2n+1}{2k+1}}{(n-k+r)\binom{n+r}{k}}(H_k-H_{n+r})$$
    and 
    $$\sum\limits_{k=0}^n(-1)^{k+1}\frac{2^{2k+1}}{2k+1}\binom{n+k}{n-k}O_{k+1}\\
        =\frac{2(-1)^{n+1}}{2n+1}O_{n+1}+\sum\limits_{k=0}^n(-1)^{n-k}\frac{2^{2k}}{2k+1}\frac{\binom{n}{k}}{\binom{2k}{k}}H_k.$$
\end{corollary}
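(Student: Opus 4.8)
Both identities are specializations of Theorem~\ref{Cheb_2}, so the plan is to substitute convenient values of $r$ and $s$ into \eqref{Cheb_eq3} and simplify.

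For the first identity I would set $s=r$. Then $r-s=0$, so $H_{r-s}=H_0=0$ and $H_{k+r-s}=H_k$, while the lower binomial on the left collapses via $\binom{k+r}{k+r}=1$ and the factor $k+s$ becomes $k+r$. On the right the coefficient $\frac{1}{(n-k+r)\binom{n+r}{n-k+r}}$ can be rewritten using the reflection $\binom{n+r}{n-k+r}=\binom{n+r}{k}$ (valid for complex upper index via the Gamma-function definition, since $(n+r)-(n-k+r)=k$). Collecting these simplifications turns \eqref{Cheb_eq3} directly into the stated first identity, with no further input needed.

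For the second identity I would set $r=s=1/2$. The left-hand coefficient becomes $\frac{1}{(k+1/2)\binom{k+1/2}{k+1/2}}=\frac{2}{2k+1}$, producing the factor $\frac{2^{2k+1}}{2k+1}$. On the right, the delicate step is simplifying the half-integer binomial: using $\binom{n+1/2}{n-k+1/2}=\binom{n+1/2}{k}$ together with \eqref{eq.binomrs1} (with $r=n$, $s=k$) and the elementary ratio $\binom{2n+1}{2k+1}/\binom{2n+1}{2k}=(2(n-k)+1)/(2k+1)$, the entire right-hand coefficient collapses to $\frac{2^{2k+1}}{2k+1}\frac{\binom{n}{k}}{\binom{2k}{k}}$. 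This is exactly the simplification already performed in the proof of \eqref{eqincor:sum=1/2n+1}, so I would reuse it verbatim.

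The remaining work is to convert the harmonic numbers into odd harmonic numbers and dispose of the resulting logarithms. By Lemma~\ref{lem.ho} one has $H_{k+1/2}=2O_{k+1}-2\ln 2$ and $H_{n+1/2}=2O_{n+1}-2\ln 2$. Substituting these, both sides acquire a term proportional to $\ln 2$: on the left it is $-2\ln 2$ times $\sum_{k}(-1)^{k+1}\frac{2^{2k+1}}{2k+1}\binom{n+k}{n-k}$, and on the right $+2\ln 2$ times $\sum_{k}(-1)^{n-k}\frac{2^{2k+1}}{2k+1}\frac{\binom{n}{k}}{\binom{2k}{k}}$. The main obstacle is to show these two $\ln 2$ contributions are equal so that they cancel. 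This follows from the two auxiliary evaluations $\sum_{k=0}^n(-1)^k\frac{2^{2k}}{2k+1}\frac{\binom{n}{k}}{\binom{2k}{k}}=\frac{1}{2n+1}$ (namely \eqref{eqincor:sum=1/2n+1}) and $\sum_{k=0}^n(-1)^k 2^{2k}\frac{\binom{n+k}{n-k}}{2k+1}=\frac{(-1)^n}{2n+1}$ (the Chebyshev integral evaluation used earlier), which show the two coefficient sums equal $-\frac{2(-1)^n}{2n+1}$ and $+\frac{2(-1)^n}{2n+1}$ respectively, making the $\ln 2$ terms identical. After they cancel, the surviving $O_{n+1}$ term on the right (again evaluated through \eqref{eqincor:sum=1/2n+1}) contributes $-\frac{4(-1)^n}{2n+1}O_{n+1}$; dividing the resulting equation by $2$ and writing $-2(-1)^n=2(-1)^{n+1}$ yields the claimed identity.
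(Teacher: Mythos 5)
Your proposal is correct and follows essentially the same route as the paper: specialize Theorem~\ref{Cheb_2} at $s=r$ for the first identity and at $r=s=1/2$ for the second, simplify the half-integer binomials via \eqref{eq.binomrs1}, convert $H_{k+1/2}$ and $H_{n+1/2}$ to odd harmonic numbers, cancel the $\ln 2$ contributions, and evaluate the $O_{n+1}$ term with \eqref{eqincor:sum=1/2n+1}. The only cosmetic difference is that you cancel the logarithms by evaluating both coefficient sums in closed form, while the paper invokes the sum identity \eqref{eq:cor28sum} directly; both are valid.
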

\begin{proof}
    Set $r=s$ in \eqref{Cheb_eq3}. The particular case follows from setting $r=\tfrac{1}{2}$, which yields with \eqref{eq.binomrs1}
    $$\sum\limits_{k=0}^n(-1)^{k+1}2^{2k}\frac{\binom{n+k}{n-k}}{2k+1}(2O_{k+1}-2\ln 2)=\sum\limits_{k=0}^n(-1)^{n-k}\frac{2^{2k}}{2k+1}\frac{\binom{n}{k}}{\binom{2k}{k}}(H_k-2O_{n+1}+2\ln 2).$$
    We use \eqref{eq:cor28sum} to remove logarithms. Finally, we use \eqref{eqincor:sum=1/2n+1} to simplify the right-hand side and the result follows.
\end{proof}

\section{Additional results}

In this section, we present an approach that is closely related to the general algorithm. First, we recall the following Lemma 
from \cite{Adegoke0}, which shows that the standard form \eqref{eq:id_gen_3} has additional insights to offer.

\begin{lemma}[{\cite[Corollary 3]{Adegoke0}}]\label{lem.t3lc1aa}
Let an arbitrary polynomial identity have the following form:
\begin{equation}\label{eq.a1lk6eb}
\sum_{k = s}^w f(k) (1 + t)^k = \sum_{k = m}^r g(k)\,t^k,
\end{equation}
where $m$, $w$, $r$ and $s$ are non-negative integers, $f(k)$ and $g(k)$ are sequences, and $t$ is a complex variable. 
Let $v$ be an arbitrary real number. Then
\begin{equation}\label{eq.ly7bawk}
\sum_{k = s}^w {\frac{{f(k)}}{{2^k }}\binom{{2k + v}}{{\left( {2k + v} \right)/2}}\binom{{k + v}}{{v/2}}^{ - 1} } = \sum_{k = \left\lfloor {(m + 1)/2} \right\rfloor }^{\left\lfloor {r/2} \right\rfloor } {\frac{{g(2k)}}{{2^{2k} }}\binom{{2k}}{k}\binom{{\left( {2k + v} \right)/2}}{{v/2}}^{-1}} ,
\end{equation}
and
\begin{equation}\label{eq.a1vfu08}
\sum_{k = m}^r {\frac{{g(k)( - 1)^k }}{{2^k }}\binom{{2k + v}}{{\left( {2k + v} \right)/2}}\binom{{k + v}}{{v/2}}^{ - 1}}  = \sum_{k = \left\lfloor {(s + 1)/2} \right\rfloor }^{\left\lfloor {w/2} \right\rfloor } {\frac{{ f(2k)}}{{2^{2k} }}\binom{{2k}}{k}\binom{{\left( {2k + v} \right)/2}}{{v/2}}^{ - 1} } .
\end{equation}
\end{lemma}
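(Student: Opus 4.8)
The plan is to exhibit a single linear functional on polynomials which, applied to both sides of \eqref{eq.a1lk6eb}, produces \eqref{eq.ly7bawk}. For a real parameter $v>-1$ I would set
\[
T[p]=\frac{\Gamma(v/2+1)}{\sqrt{\pi}\,\Gamma((v+1)/2)}\int_{-1}^{1}p(t)\,(1-t^2)^{(v-1)/2}\,dt,
\]
which is linear in the polynomial $p$. Since \eqref{eq.a1lk6eb} is an identity of polynomials in $t$, applying $T$ termwise to each side is legitimate, and the entire argument reduces to evaluating $T$ on the two monomial families $t^k$ and $(1+t)^k$.

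First I would compute $T[t^k]$. By the symmetry $t\mapsto -t$ of the weight, $T[t^{k}]=0$ for odd $k$, so only even indices $k=2j$ survive on the right-hand side of \eqref{eq.a1lk6eb}; this is exactly what turns the range $m\le k\le r$ into $\lfloor (m+1)/2\rfloor=\lceil m/2\rceil\le j\le\lfloor r/2\rfloor$. For even $k=2j$ the substitution $u=t^2$ reduces the integral to the Beta integral of Lemma~\ref{lem.integ}, and after applying the Legendre duplication formula to $\binom{2j}{j}$ one checks that the normalising constant was chosen precisely so that $T[t^{2j}]=2^{-2j}\binom{2j}{j}\binom{(2j+v)/2}{v/2}^{-1}$. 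Next I would compute $T[(1+t)^k]$: writing $(1-t^2)^{(v-1)/2}=(1+t)^{(v-1)/2}(1-t)^{(v-1)/2}$ and substituting $t=2u-1$ again reduces the integral to Lemma~\ref{lem.integ}, and a second application of the duplication formula to $\Gamma(2k+v+1)$ gives $T[(1+t)^k]=2^{-k}\binom{2k+v}{(2k+v)/2}\binom{k+v}{v/2}^{-1}$. The crucial point is that the one constant $\Gamma(v/2+1)/(\sqrt{\pi}\,\Gamma((v+1)/2))$ normalises both families at once; granting this, applying $T$ to \eqref{eq.a1lk6eb} yields \eqref{eq.ly7bawk} verbatim.

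For the second identity \eqref{eq.a1vfu08} I would avoid repeating the computation and instead exploit a symmetry. Substituting $t\mapsto -1-t$ in \eqref{eq.a1lk6eb}, so that $(1+t)^k\mapsto(-1)^k t^k$ and $t^k\mapsto(-1)^k(1+t)^k$, converts it into
\[
\sum_{k=m}^{r}(-1)^k g(k)\,(1+t)^{k}=\sum_{k=s}^{w}(-1)^k f(k)\,t^{k},
\]
which is again of the form \eqref{eq.a1lk6eb}, now with the two sequences interchanged and decorated by $(-1)^k$. Applying the already-established first identity to this new identity delivers \eqref{eq.a1vfu08} directly, the even-index truncation this time acting on the $f$-side and producing the range $\lfloor (s+1)/2\rfloor\le k\le\lfloor w/2\rfloor$.

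The remaining issue is that the lemma allows $v$ to be an arbitrary real number, whereas $T$ converges only for $v>-1$. I would close this gap by analytic continuation: both sides of \eqref{eq.ly7bawk} and \eqref{eq.a1vfu08} are finite $\mathbb{C}$-linear combinations of ratios of Gamma functions in $v$, hence meromorphic, so an identity valid on the half-line $v>-1$ persists at every real $v$ for which the displayed quantities are defined. I expect the genuine obstacle to be bookkeeping rather than analysis: carrying out the two duplication-formula simplifications correctly and confirming that the single prefactor simultaneously normalises $t^{2j}$ and $(1+t)^k$ is where an error is most likely to hide.
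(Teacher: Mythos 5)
Your proposal is correct, and I verified the two moment evaluations on which it rests: with $T[p]=\tfrac{\Gamma(v/2+1)}{\sqrt{\pi}\,\Gamma((v+1)/2)}\int_{-1}^{1}p(t)(1-t^2)^{(v-1)/2}\,dt$ one indeed gets $T[t^{2j}]=\tfrac{(2j)!}{4^{j}j!}\cdot\tfrac{\Gamma(v/2+1)}{\Gamma(j+v/2+1)}=2^{-2j}\binom{2j}{j}\binom{(2j+v)/2}{v/2}^{-1}$ and, via $t=2u-1$ and the duplication formula, $T[(1+t)^k]=\tfrac{2^{k+v}\Gamma(v/2+1)\Gamma(k+(v+1)/2)}{\sqrt{\pi}\,\Gamma(k+v+1)}=2^{-k}\binom{2k+v}{(2k+v)/2}\binom{k+v}{v/2}^{-1}$, with the single prefactor normalising both families (and $T[1]=1$ as a sanity check). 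The vanishing of odd moments converts the range $m\le k\le r$ into $\lceil m/2\rceil=\lfloor(m+1)/2\rfloor\le j\le\lfloor r/2\rfloor$ exactly as stated, the involution $t\mapsto-1-t$ swaps the roles of $f$ and $g$ up to signs and yields \eqref{eq.a1vfu08} from \eqref{eq.ly7bawk}, and the meromorphic-continuation step from $v>-1$ to all real $v$ where the terms are defined is sound since both sides are finite sums of Gamma-function ratios. Note, however, that the paper does not prove this lemma at all: it is imported verbatim as Corollary 3 of the cited preprint \cite{Adegoke0}, so there is no in-text argument to compare against. Your construction of an ultraspherical-type moment functional on $[-1,1]$ is a self-contained route that is at least as clean as the Beta-integral-on-$[0,1]$ machinery the paper uses elsewhere (Lemma \ref{lem.integ} and the general algorithm of Section \ref{Section_GerneraFramework}), and it has the virtue of making the simultaneous normalisation of $t^{2j}$ and $(1+t)^k$ transparent rather than a coincidence of bookkeeping.
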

In particular,
\begin{equation}
\sum_{k = s}^w \frac{{f(k)}}{{2^k }}\binom{{2k}}{k} = \sum_{k = \left\lfloor {(m + 1)/2} \right\rfloor }^{\left\lfloor {r/2} \right\rfloor } {\frac{{g(2k)}}{{2^{2k} }}\binom{{2k}}{k}} ,
\end{equation}
and
\begin{equation}
\sum_{k = m}^r \frac{{g(k)( - 1)^k }}{{2^k }}\binom{{2k}}{k} = \sum_{k = \left\lfloor {(s + 1)/2} \right\rfloor }^{\left\lfloor {w/2} \right\rfloor } {\frac{{ f(2k)}}{{2^{2k} }}\binom{{2k}}{k}} .
\end{equation}

\begin{lemma}[{\cite[Theorem 8]{Adegoke0}}]\label{lem.gvw1qxa}
Let the polynomial identity be given as in~\eqref{eq.a1lk6eb}. Let $u$ and $v$ be arbitrary complex numbers such that $\Re u>-1$ 
and $\Re v>-1$. Then
\begin{equation}\label{eq.cnwt5zb}
\begin{split}
&\sum_{k = s}^n {\frac{{f(k)}}{{2^{2k} }}\binom{{v}}{{v/2}}\binom{{2k + u}}{{\left( {2k + u} \right)/2}}\binom{{\left( {2k + u + v} \right)/2}}{{v/2}}^{-1}}\\ &\qquad  = \sum_{k = m}^r {\frac{{( - 1)^{k} g(k)}}{{2^{2k} }}\binom{{u}}{{u/2}}\binom{{2k + v}}{{\left( {2k + v} \right)/2}}\binom{{\left( {2k + u + v} \right)/2}}{{u/2}}^{-1}} .
\end{split}
\end{equation}
\end{lemma}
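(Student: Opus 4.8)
The plan is to realize the passage from the polynomial identity~\eqref{eq.a1lk6eb} to~\eqref{eq.cnwt5zb} as a single Beta-type integral transform, analogous to the integration step of the general algorithm but tuned so as to manufacture \emph{central} binomial coefficients rather than ordinary ones. Concretely, I would multiply both sides of~\eqref{eq.a1lk6eb} by the weight $(1+t)^{(u-1)/2}(-t)^{(v-1)/2}$ and integrate termwise over $t\in[-1,0]$; equivalently, after the substitution $x=1+t$ this is multiplication by $x^{(u-1)/2}(1-x)^{(v-1)/2}$ followed by integration over $x\in[0,1]$. The hypotheses $\Re u>-1$ and $\Re v>-1$ are exactly the conditions guaranteeing integrability of the weight at the two endpoints, so the termwise integration (legitimate because both sums are finite) produces convergent Beta integrals.

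Next I would evaluate the two resulting families of integrals with Lemma~\ref{lem.integ}. On the left a typical term $(1+t)^k$ becomes $x^k$, giving $\int_0^1 x^{k+(u-1)/2}(1-x)^{(v-1)/2}\,dx=B\!\left(k+\tfrac{u+1}{2},\tfrac{v+1}{2}\right)$. On the right a typical term $t^k$ becomes $(x-1)^k=(-1)^k(1-x)^k$, which supplies precisely the factor $(-1)^k$ occurring in~\eqref{eq.cnwt5zb} and leaves $\int_0^1 x^{(u-1)/2}(1-x)^{k+(v-1)/2}\,dx=B\!\left(\tfrac{u+1}{2},k+\tfrac{v+1}{2}\right)$. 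After integration the identity therefore reads $\sum_{k=s}^w f(k)\,B\!\left(k+\tfrac{u+1}{2},\tfrac{v+1}{2}\right)=\sum_{k=m}^r (-1)^k g(k)\,B\!\left(\tfrac{u+1}{2},k+\tfrac{v+1}{2}\right)$, which already has the shape of~\eqref{eq.cnwt5zb}; it remains only to rewrite each Beta value in the stated central-binomial form.

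The technical heart, which I expect to be the main obstacle, is this last rewriting, since it is pure bookkeeping with the Legendre duplication formula $\Gamma(2z)=\pi^{-1/2}2^{2z-1}\Gamma(z)\Gamma(z+\tfrac12)$. Starting from $B\!\left(k+\tfrac{u+1}{2},\tfrac{v+1}{2}\right)=\Gamma\!\left(k+\tfrac{u+1}{2}\right)\Gamma\!\left(\tfrac{v+1}{2}\right)\big/\Gamma\!\left(k+\tfrac{u+v}{2}+1\right)$ and applying the duplication formula to $\Gamma(2k+u+1)$ and to $\Gamma(v+1)$, the Gamma quotient collapses, after cancellation of $\Gamma(v/2+1)$ and $\Gamma(k+u/2+1)$ and of the powers of $2$, into $\tfrac{\pi}{2^{u+v}}\,2^{-2k}\binom{v}{v/2}\binom{2k+u}{(2k+u)/2}\binom{(2k+u+v)/2}{v/2}^{-1}$; the analogous computation with $u$ and $v$ interchanged (matching the interchange of the Beta arguments) handles the right-hand side and produces $\tfrac{\pi}{2^{u+v}}\,2^{-2k}\binom{u}{u/2}\binom{2k+v}{(2k+v)/2}\binom{(2k+u+v)/2}{u/2}^{-1}$. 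Since the overall constant $\tfrac{\pi}{2^{u+v}}$ is common to every term on both sides, it cancels and one is left with exactly~\eqref{eq.cnwt5zb}. I would isolate the duplication-formula manipulation as a standalone computation so that the only thing left to verify is the exact matching of the powers of $2$ and of the constants $\binom{u}{u/2}$ and $\binom{v}{v/2}$.
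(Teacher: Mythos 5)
Your proof is correct. The paper itself offers no proof of this lemma (it is quoted from \cite[Theorem 8]{Adegoke0}), but your argument --- multiplying \eqref{eq.a1lk6eb} by $(1+t)^{(u-1)/2}(-t)^{(v-1)/2}$, integrating over $[-1,0]$, and converting the resulting Beta values $B\bigl(k+\tfrac{u+1}{2},\tfrac{v+1}{2}\bigr)$ and $B\bigl(\tfrac{u+1}{2},k+\tfrac{v+1}{2}\bigr)$ via the duplication formula into $\tfrac{\pi}{2^{u+v}}\,2^{-2k}\binom{v}{v/2}\binom{2k+u}{(2k+u)/2}\binom{(2k+u+v)/2}{v/2}^{-1}$ and its $u\leftrightarrow v$ counterpart --- is exactly the fractional-parameter version of the paper's own Beta-integral algorithm, and the common constant $\pi/2^{u+v}$ does cancel as you claim. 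The hypotheses $\Re u>-1$, $\Re v>-1$ are indeed precisely what makes the endpoint singularities integrable, so the derivation is complete; the only cosmetic mismatch is that the upper limit $n$ on the left of \eqref{eq.cnwt5zb} should read $w$ to agree with \eqref{eq.a1lk6eb}, which is a typo in the statement rather than a defect of your proof.
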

In particular,
\begin{equation}
\sum_{k = s}^n {\frac{{f(k)}}{{2^{2k} }}\binom{{2k}}{{k}}} = \sum_{k = m}^r {\frac{{( - 1)^{k} g(k)}}{{2^{2k} }}\binom{{2k}}{{k}}} .
\end{equation}

We can now write more identities, for instance from~\eqref{eq.1}, ~\eqref{eq.3} and \eqref{eq.4}.

Identity~\eqref{eq.1} can be written in the standard form~\eqref{eq.a1lk6eb} with
\begin{equation}\label{eq.uqblgup}
f(k) = \frac{\delta_{nk} \left( {H_k + 1} \right) - 1}{n - k + \delta_{nk}},\quad g(k) = \binom{{n}}{k}H_k ,\quad w = n = r,\quad s = m = 0,
\end{equation}
where here and throughout this paper $\delta_{ij}$ is Kronecker's delta symbol having the value $1$ when $i=j$ and zero otherwise. 
Identity~\eqref{eq.3} can be written in the standard form~\eqref{eq.a1lk6eb} with
\begin{align}
f(k) &= \delta_{nk} H_n^2 - \left( {1 - \delta_{nk} } \right)\frac{{\left( {H_n - 2H_{n - k - 1 + \delta_{nk} } + H_k } \right)}}{{n - k + \delta_{nk} }},\\
g(k) &= \binom{n}{k}H_k^2,
\end{align}

We now formulate the result for \eqref{eq.4}. Similar results can be obtained for other, just mentioned identities.

\begin{theorem}
We have
\begin{equation}
\sum_{k = 0}^{\left\lfloor {n/2} \right\rfloor} \binom{{n}}{{2k}}2^{n - 2k} \binom{{2k}}{k}H_{2k}^{(2)} = \binom{{2n}}{n}H_n^{(2)} - \sum_{k = 0}^{n - 1} {\binom{{2k}}{k}2^{n - k} \frac{{H_n - H_k }}{{n - k}}}, 
\end{equation}
\begin{equation}
\begin{split}
\sum_{k = 0}^n {( - 1)^k \binom{{n}}{k}2^{ - k} \binom{{2k}}{k}H_k^{(2)} }  &= -\sum_{k = 0}^{\left\lfloor {n/2} \right\rfloor  - 1} {\binom{{2k}}{k}2^{ - 2k} \frac{{H_n - H_{2k} }}{{n - 2k}}} \\
&\qquad\qquad +  \begin{cases}
 2^{ - n} \binom{{n}}{{n/2}}H_n^{(2)},&\text{if $n$ is even;}  \\ 
  - 2^{ - n + 1} \binom{{n - 1}}{{\left( {n - 1} \right)/2}}\dfrac 1n,&\text{if $n$ is odd;} \\ 
 \end{cases} 
\end{split}
\end{equation}
and more generally, if $n$ is a non-negative integer and $v$ is a real number, then
\begin{equation}\label{eq.d0e2dlw}
\begin{split}
&\sum_{k = 0}^{\left\lfloor {n/2} \right\rfloor} \binom{n}{2k} 2^{n - 2k} \binom{2k}{k} \binom{\left( {2k + v} \right)/2}{{v/2}}^{- 1} H_{2k}^{(2)} \\
&\qquad= \binom{{2n + v}}{{\left( {2n + v} \right)/2}}\binom{{n + v}}{{v/2}}^{ - 1} H_n^{(2)}\\ 
&\qquad\qquad - \sum_{k = 0}^{n - 1} {\binom{{2k + v}}{{\left( {2k + v} \right)/2}}2^{n - k} \binom{{k + v}}{{v/2}}^{ - 1} \frac{{H_n - H_k }}{{n - k}}} ,
\end{split}
\end{equation}
and
\begin{equation}\label{eq.r66cnje}
\begin{split}
&\sum_{k = 0}^n {( - 1)^k \binom{{n}}{k}2^{ - k} \binom{{2k + v}}{{\left( {2k + v} \right)/2}}\binom{{k + v}}{{v/2}}^{ - 1} H_k^{(2)} }\\ 
&\qquad = -\sum_{k = 0}^{\left\lfloor {n/2} \right\rfloor  - 1} {\binom{{2k}}{k}2^{ - 2k} \binom{{\left( {2k + v} \right)/2}}{{v/2}}^{ - 1} \frac{{H_n - H_{2k} }}{{n - 2k}}}\\ 
&\qquad\qquad +  \begin{cases}
  \binom{{n}}{{n/2}} 2^{ - n}\binom{{\left( {n + v} \right)/2}}{{v/2}}^{ - 1} H_n^{(2)},&\text{if $n$ is even;} \\ 
  - \binom{{n - 1}}{{\left( {n - 1} \right)/2}}2^{ - n + 1} \binom{{\left( {n - 1 + v} \right)/2}}{v/2}^{-1}\dfrac 1n,&\text{if $n$ is odd.} \\ 
 \end{cases} 
\end{split}
\end{equation}
\end{theorem}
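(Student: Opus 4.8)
The plan is to apply Lemma~\ref{lem.t3lc1aa} to identity~\eqref{eq.4} once it is written in the standard form~\eqref{eq.a1lk6eb}. First I would move the $(1+t)$-powers to the left and re-index the inner sum of~\eqref{eq.4} by $k\mapsto n-k$, which turns $\sum_{k=1}^n\frac{H_n-H_{n-k}}{k}(1+t)^{n-k}$ into $\sum_{k=0}^{n-1}\frac{H_n-H_k}{n-k}(1+t)^k$. Collecting the diagonal $(1+t)^n$ term, this displays~\eqref{eq.4} as $\sum_{k=0}^n f(k)(1+t)^k=\sum_{k=0}^n g(k)t^k$ with
\[
f(k)=\delta_{nk}H_n^{(2)}-(1-\delta_{nk})\frac{H_n-H_k}{n-k},\qquad g(k)=\binom{n}{k}H_k^{(2)},
\]
and $s=m=0$, $w=r=n$, exactly in the spirit of the representations already recorded for~\eqref{eq.1} and~\eqref{eq.3}.

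For~\eqref{eq.d0e2dlw} I would invoke~\eqref{eq.ly7bawk}. With $g(2k)=\binom{n}{2k}H_{2k}^{(2)}$, its right-hand side is precisely $2^{-n}$ times the left-hand side of~\eqref{eq.d0e2dlw}, whereas its left-hand side equals $2^{-n}\sum_{k=0}^n f(k)\,2^{n-k}\binom{2k+v}{(2k+v)/2}\binom{k+v}{v/2}^{-1}$. Substituting the explicit $f(k)$, the Kronecker term supplies the diagonal summand $H_n^{(2)}\binom{2n+v}{(2n+v)/2}\binom{n+v}{v/2}^{-1}$ while the remaining terms furnish the stated sum over $0\le k\le n-1$. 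Multiplying the whole equation by $2^n$ then yields~\eqref{eq.d0e2dlw}.

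For~\eqref{eq.r66cnje} I would apply~\eqref{eq.a1vfu08} instead. Its left-hand side is literally the left-hand side of~\eqref{eq.r66cnje}, so no rescaling is needed, and its right-hand side is $\sum_{k=0}^{\lfloor n/2\rfloor}f(2k)\,2^{-2k}\binom{2k}{k}\binom{(2k+v)/2}{v/2}^{-1}$, into which I substitute $f(2k)$. The parity of $n$ is the crux and the step I expect to demand the most care: when $n$ is even the top index $k=n/2$ activates the Kronecker delta and produces the $H_n^{(2)}$ branch of the cases expression; when $n$ is odd no index satisfies $2k=n$, so I would peel off the top term $k=(n-1)/2$, where $2k=n-1$ and $H_n-H_{n-1}=1/n$ collapses $\frac{H_n-H_{2k}}{n-2k}$ to $1/n$, producing the odd branch. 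In both cases the surviving summands run over $0\le k\le\lfloor n/2\rfloor-1$ and assemble into the displayed sum, establishing~\eqref{eq.r66cnje}.

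Finally, the two explicit identities preceding the general statements are the specialization $v=0$: there $\binom{(2k+v)/2}{v/2}^{-1}=\binom{k+v}{v/2}^{-1}=1$ and $\binom{2k+v}{(2k+v)/2}=\binom{2k}{k}$, so~\eqref{eq.d0e2dlw} and~\eqref{eq.r66cnje} collapse directly onto them.
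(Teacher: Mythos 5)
Your proposal is correct and follows exactly the paper's intended route: rewrite identity~(D) in the standard form~\eqref{eq.a1lk6eb} with $f(k)=\delta_{nk}H_n^{(2)}-(1-\delta_{nk})\frac{H_n-H_k}{n-k}$ and $g(k)=\binom{n}{k}H_k^{(2)}$, then apply~\eqref{eq.ly7bawk} and~\eqref{eq.a1vfu08} of Lemma~\ref{lem.t3lc1aa} and specialize $v=0$. Your parity analysis of the $f(2k)$ sum correctly reproduces the two-case right-hand side of~\eqref{eq.r66cnje}, which is the only nontrivial bookkeeping the paper leaves implicit.
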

\begin{proof}
Identities~\eqref{eq.d0e2dlw} and~\eqref{eq.r66cnje} now follow from~\eqref{eq.ly7bawk} and~\eqref{eq.a1vfu08}.
\end{proof}

\begin{theorem}
We have
\begin{equation}
\sum_{k = 0}^n {( - 1)^k \binom{{n}}{k}2^{ - 2k} \binom{{2k}}{k}H_k^{(2)} }  = \binom{{2n}}{n}2^{ - 2n} H_n^{(2)}  - \sum_{k = 0}^{n - 1} {\frac{{H_n - H_k }}{{n - k}}\binom{{2k}}{k}2^{ - 2k} } ,
\end{equation}
and more generally, if $n$ is a non-negative integer and $u$ and $v$ are complex numbers such that $\Re u>-1$ and $\Re v>-1$, then
\begin{equation}
\begin{split}
&\sum_{k = 0}^n {( - 1)^k \binom{{n}}{k}2^{ - 2k} \binom{{2k + v}}{{\left( {2k + v} \right)/2}}\binom{{\left( {2k + u + v} \right)/2}}{{u/2}}^{ - 1} H_k^{(2)} } \\
&\qquad = \binom{{v}}{{v/2}}\binom{{u}}{{u/2}}^{ - 1} \binom{{2n + u}}{{\left( {2n + u} \right)/2}}\binom{{\left( {2n + u + v} \right)/2}}{{v/2}}^{ - 1} 2^{ - 2n} H_n^{(2)}\\ 
&\qquad\qquad - \binom{{v}}{{v/2}}\binom{{u}}{{u/2}}^{ - 1} \sum_{k = 0}^{n - 1} {\frac{{H_n - H_k }}{{n - k}}\binom{{2k + u}}{{\left( {2k + u} \right)/2}}2^{ - 2k} \binom{{\left( {2k + u + v} \right)/2}}{{v/2}}^{ - 1} } .
\end{split}
\end{equation}
\end{theorem}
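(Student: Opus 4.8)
The plan is to recognize both displayed identities as a single instance of Lemma~\ref{lem.gvw1qxa} applied to identity~\eqref{eq.4}, exactly in the spirit of the two preceding theorems. First I would rewrite (D) in the standard form~\eqref{eq.a1lk6eb}, collecting the powers of $1+t$ on one side. The term $H_n^{(2)}(1+t)^n$ stands alone, while reindexing $\sum_{k=1}^n \tfrac{H_n-H_{n-k}}{k}(1+t)^{n-k}$ by $j=n-k$ turns it into $\sum_{j=0}^{n-1}\tfrac{H_n-H_j}{n-j}(1+t)^j$. Combining these — precisely as the text already does for \eqref{eq.3} — yields
\begin{equation*}
f(k)=\delta_{nk}H_n^{(2)}-(1-\delta_{nk})\frac{H_n-H_k}{n-k+\delta_{nk}},\qquad g(k)=\binom{n}{k}H_k^{(2)},
\end{equation*}
with $s=m=0$ and $w=r=n$.

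Next I would feed these sequences into~\eqref{eq.cnwt5zb}. On the right-hand side of~\eqref{eq.cnwt5zb}, substituting $g(k)=\binom{n}{k}H_k^{(2)}$ reproduces, up to the common factor $\binom{u}{u/2}$, exactly the left-hand sum of the theorem. On the left-hand side of~\eqref{eq.cnwt5zb} I would split $f(k)$ into its $k=n$ contribution and its $0\le k\le n-1$ part: the former gives $\binom{v}{v/2}2^{-2n}\binom{2n+u}{(2n+u)/2}\binom{(2n+u+v)/2}{v/2}^{-1}H_n^{(2)}$, and the latter gives the claimed finite sum (with a minus sign), each carrying the common factor $\binom{v}{v/2}$.

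Equating the two sides and dividing through by $\binom{u}{u/2}$ then produces the factor $\binom{v}{v/2}\binom{u}{u/2}^{-1}$ multiplying the entire right-hand side, giving the general identity verbatim; the hypotheses $\Re u>-1$ and $\Re v>-1$ are exactly those demanded by Lemma~\ref{lem.gvw1qxa}. Finally, specializing $u=v=0$ collapses every central binomial factor via $\binom{0}{0}=1$, $\binom{2k}{k}$, and $\binom{k}{0}=\binom{n}{0}=1$, recovering the first displayed identity as the stated special case.

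I expect the only genuine obstacle to be the bookkeeping in putting (D) into standard form: getting the reindexing and the Kronecker-delta description of $f$ right, so that the isolated $(1+t)^n$ term is correctly separated from the sum and matches the $k=n$ slot of~\eqref{eq.cnwt5zb}. Once the standard form is fixed, the remainder is a direct substitution into~\eqref{eq.cnwt5zb} followed by the cancellation of $\binom{u}{u/2}$, requiring no further nontrivial combinatorial identity.
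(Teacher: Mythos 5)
Your proposal is correct and follows essentially the same route as the paper: put identity \eqref{eq.4} into the standard form \eqref{eq.a1lk6eb} with $f(k)=\delta_{nk}H_n^{(2)}-(1-\delta_{nk})\frac{H_n-H_k}{n-k+\delta_{nk}}$ and $g(k)=\binom{n}{k}H_k^{(2)}$, substitute into \eqref{eq.cnwt5zb}, divide by $\binom{u}{u/2}$, and specialize to $u=v=0$ for the first display. If anything, your write-up is cleaner than the paper's, which cites the parameters \eqref{eq.uqblgup} belonging to identity \eqref{eq.1} (evidently a slip, since those correspond to $H_k$ rather than $H_k^{(2)}$) and reaches the special case by the more roundabout substitution $u=v$, $u\mapsto 2u$, $u=0$.
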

\begin{proof}
Use identity~\eqref{eq.cnwt5zb} with the parameters and variables given in~\eqref{eq.uqblgup}. Particular cases follow after setting $u=v$ and then replacing $u$ by $2u$ and then setting $u=0$.
\end{proof}

\section{Conclusion}\label{Section_Conclusion}

In this article, we have introduced a general framework (algorithm) to study combinatorial sums associated with certain classes of polynomial identities. The sums involve products of binomial coefficients and also harmonic numbers. To underline its usefulness we have shown applications based on some prominent polynomial identities from the recent past. This is only the starting point and many new families of sums can be studied. We invite the readers to apply the framework to sums associated with
\begin{itemize}
\item the identity of Simons \cite{Simons}
\begin{equation*}
\sum_{k=0}^n \binom{n}{k} \binom{n+k}{k} t^k = \sum_{k=0}^n (-1)^{n+k} \binom{n}{k} \binom{n+k}{k} (1+t)^k
\end{equation*}
\item the Narayana polynomials \cite{Mansour1,Mansour2}
\begin{equation*}
\sum_{k=0}^n \frac{1}{n} \binom{n}{k-1} \binom{n}{k} t^k = \frac{1}{n+1} \sum_{k=0}^n (-1)^{k} \binom{n+1}{k} \binom{2n-k}{n} (1-t)^k
\end{equation*} 
\item an identity involving central binomial coefficients \cite{Gould1}
\begin{equation*}
\sum_{k=0}^n 2^{2(n-k)} \binom{n}{k} \binom{2k}{k} t^k = \sum_{k=0}^n \binom{2k}{k} \binom{2(n-k)}{n-k} (1+t)^k
\end{equation*} 
\end{itemize} 
and many more.

\end{document}